\documentclass[12pt]{article}
\usepackage{xcolor,graphicx,comment}
\usepackage{algorithm,algorithmic}
\textwidth = 6.5in
\textheight = 9.1in
\hsize=6.5true in
\vsize=9.1true in
\usepackage{amsmath,amssymb,amsthm,a4wide,color}
\usepackage{graphicx,xcolor,pictex,epstopdf}
\input prepictex
\input pictex
\input postpictex
\input colordvi
%
%
%
\font\tenrm=cmr10
\font\teni=cmmi10 \skewchar\teni='177
\font\tensy=cmsy10 \skewchar\tensy='60
\font\tenex=cmex10
\font\tenit=cmti10
\font\tensl=cmsl10
\font\tenbf=cmbx10
\font\tentt=cmtt10
\font\ninerm=cmr9
\font\ninei=cmmi9 \skewchar\ninei='177
\font\ninesy=cmsy9 \skewchar\ninesy='60
\font\nineit=cmti9
\font\ninesl=cmsl9
\font\ninebf=cmbx9
\font\ninett=cmtt9
\font\eightrm=cmr8
\font\eighti=cmmi8 \skewchar\eighti='177
\font\eightsy=cmsy8 \skewchar\eightsy='60
\font\eightit=cmti8
\font\eightsl=cmsl8
\font\eightbf=cmbx8
\font\eighttt=cmtt8
\font\sevenrm=cmr7
\font\seveni=cmmi7 \skewchar\seveni='177
\font\sevensy=cmsy7 \skewchar\sevensy='60
\font\sevenbf=cmbx7
\font\sevenit=cmmi7
\font\sevensl=cmmi7
\font\seventt=cmr7
\font\sixrm=cmr6
\font\sixi=cmmi6 \skewchar\sixi='177
\font\sixsy=cmsy6 \skewchar\sixsy='60
\font\sixbf=cmbx6
\font\fiverm=cmr5
\font\fivei=cmmi5 \skewchar\fivei='177
\font\fivesy=cmsy5 \skewchar\fivesy='60
\font\fivebf=cmbx5
\def\tenpoint{\def\rm{\fam0\tenrm}%
        \textfont0=\tenrm \scriptfont0=\sevenrm \scriptscriptfont0=\fiverm
        \textfont1=\teni \scriptfont1=\seveni \scriptscriptfont1=\fivei
        \textfont2=\tensy \scriptfont2=\sevensy \scriptscriptfont2=\fivesy
        \textfont3=\tenex \scriptfont3=\tenex \scriptscriptfont3=\tenex
        \def\it{\fam\itfam\tenit}%
        \textfont\itfam=\tenit
        \def\sl{\fam\slfam\tensl}%
        \textfont\slfam=\tensl
        \def\bf{\fam\bffam\tenbf}%
        \textfont\bffam=\tenbf \scriptfont\bffam=\sevenbf
                \scriptscriptfont\bffam=\fivebf
        \def\tt{\fam\ttfam\tentt}%
        \textfont\ttfam=\tentt
        \normalbaselineskip=12pt%
        \let\sc=\eightrm        
        \setbox\strutbox=\hbox{\vrule height8.5pt depth3.5pt width0pt}%
        \normalbaselines\rm}
\def\ninepoint{\def\rm{\fam0\ninerm}%
        \textfont0=\ninerm \scriptfont0=\sixrm \scriptscriptfont0=\fiverm
        \textfont1=\ninei \scriptfont1=\sixi \scriptscriptfont1=\fivei
        \textfont2=\ninesy \scriptfont2=\sixsy \scriptscriptfont2=\fivesy
        \textfont3=\tenex \scriptfont3=\tenex \scriptscriptfont3=\tenex
        \def\it{\fam\itfam\nineit}%
        \textfont\itfam=\nineit
        \def\sl{\fam\slfam\ninesl}%
        \textfont\slfam=\ninesl
        \def\bf{\fam\bffam\ninebf}%
        \textfont\bffam=\ninebf \scriptfont\bffam=\sixbf
                \scriptscriptfont\bffam=\fivebf
        \def\tt{\fam\ttfam\ninett}%
        \textfont\ttfam=\ninett
        \normalbaselineskip=11pt%
        \let\sc=\sevenrm        
        \setbox\strutbox=\hbox{\vrule height8pt depth3pt width0pt}%
        \normalbaselines\rm}
\def\eightpoint{\def\rm{\fam0\eightrm}%
        \textfont0=\eightrm \scriptfont0=\sixrm \scriptscriptfont0=\fiverm
        \textfont1=\eighti \scriptfont1=\sixi \scriptscriptfont1=\fivei
        \textfont2=\eightsy \scriptfont2=\sixsy \scriptscriptfont2=\fivesy
        \textfont3=\tenex \scriptfont3=\tenex \scriptscriptfont3=\tenex
        \def\it{\fam\itfam\eightit}%
        \textfont\itfam=\eightit
        \def\sl{\fam\slfam\eightsl}%
        \textfont\slfam=\eightsl
        \def\bf{\fam\bffam\eightbf}%
        \textfont\bffam=\eightbf \scriptfont\bffam=\sixbf
                \scriptscriptfont\bffam=\fivebf
        \def\tt{\fam\ttfam\eighttt}%
        \textfont\ttfam=\eighttt
        \normalbaselineskip=9pt%
        \let\sc=\sixrm  
        \setbox\strutbox=\hbox{\vrule height7pt depth2pt width0pt}%
        \normalbaselines\rm}
\def\sevenpoint{\def\rm{\fam0\sevenrm}%
        \textfont0=\sevenrm \scriptfont0=\fiverm \scriptscriptfont0=\fiverm
        \textfont1=\seveni \scriptfont1=\fivei \scriptscriptfont1=\fivei
        \textfont2=\sevensy \scriptfont2=\fivesy \scriptscriptfont2=\fivesy
        \textfont3=\tenex \scriptfont3=\tenex \scriptscriptfont3=\tenex
        \def\it{\fam\itfam\sevenit}%
        \textfont\itfam=\sevenit
        \def\sl{\fam\slfam\sevensl}%
        \textfont\slfam=\sevensl
        \def\bf{\fam\bffam\sevenbf}%
        \textfont\bffam=\sevenbf \scriptfont\bffam=\fivebf
                \scriptscriptfont\bffam=\fivebf
        \def\tt{\fam\ttfam\seventt}%
        \textfont\ttfam=\seventt
        \normalbaselineskip=8pt%
        \let\sc=\fiverm  
        \setbox\strutbox=\hbox{\vrule height6pt depth2pt width0pt}%
        \normalbaselines\rm}

\newbox\figureone
\newbox\figuretwo
\newbox\figurethree
\newbox\figurefour
\newbox\figurefive
\newbox\figuresix
\newbox\figureseven
\newbox\figureeight
\newbox\figurenine
\newbox\figureten
\newbox\figureeleven
\newbox\figureutwo
\newbox\figureuthree
\newbox\figurefourutwo
\newbox\figurequarteruthree
\newbox\figurelegend
\newbox\figurelegendtexttwo
\newbox\figurelegendtext
\newbox\figurelegendvalues
\newdimen\xfiglen
\newdimen\yfiglen
\newdimen\xfigdim
\newdimen\yfigdim
\newdimen\legendheight
\newdimen\xfiglenshort \newdimen\yfiglenshort
\xfigdim = 1.3true in
\yfigdim = 1.0true in
\xfiglen = 1.3true in
\yfiglen = 2.8true in
\legendheight = 0.1true in

\newtheorem{theorem}{Theorem}[section]

\newtheorem{remark}{Remark}[section]
\newcommand{\uld}[1]{\underline{d#1}}
\newcommand{\dete}{\text{det}}
\newcommand{\pact}{p^{\text{act}}}
\newcommand{\qact}{q^{\text{act}}}
\newcommand{\uact}{u^{\text{act}}}
\newcommand{\ainf}{a^\infty}
\newcommand{\Binf}{B^\infty}
\newcommand{\pinf}{p^\infty}
\newcommand{\rinf}{r^\infty}
\newcommand{\uinf}{u^\infty}
\newcommand{\usim}{u^\sim}

\newcommand{\decayfunc}{\Psi}
\newcommand{\rrr}{\mathfrak{r}}
\newcommand{\sss}{\mathfrak{s}}
\newcommand{\altil}{{\tilde{\alpha}}}
\definecolor{grey}{rgb}{0.5,0.5,0.5}

\title{
Identification
of space-dependent coefficients in two 
competing 
terms of a nonlinear subdiffusion equation
}
\author{Barbara Kaltenbacher\footnote{
Department of Mathematics,
Alpen-Adria-Universit\"at Klagenfurt.
barbara.kaltenbacher@aau.at}
\and
William Rundell\footnote{
Department of Mathematics,
Texas A\&M University, College Station, Texas 77843 USA.
rundell@math.tamu.edu}
}
\begin{document}
\maketitle
\begin{abstract}
We consider a (sub)diffusion equation with a nonlinearity of the form $pf(u)-qu$, where $p$ and $q$ are space dependent functions.
Prominent examples are the Fisher-KPP, the Frank-Kamenetskii-Zeldovich and the Allen-Cahn equations. 
We devise a fixed point scheme for reconstructing the spatially varying coefficients from interior observations a) at final time under two different excitations b) at two different time instances under a single excitation. Convergence of the scheme as well as local uniqueness of these coefficients is proven. 
Numerical experiments illustrate the performance of the reconstruction scheme.  
\end{abstract}
keywords: coefficient identification, reaction-(sub)diffusion equation, fixed point scheme, generalised Fisher-KPP
\section{Introduction}

\subsection{The model}
The equation
\begin{equation}\label{pq_equation}
u_t - D \triangle u = q(x)u - p(x)f(u) 
\end{equation}
has become a standard model in a variety of
applications including population and chemical reaction models.
The usual assumption is that both $p$ and $q$ are non-negative.
It is critical that $p(x)$ is so, else the solution can ``blow up'' in finite
time if $f(u) \approx u^{1+\epsilon}$. Typically $f(u)$
is taken to be a simple power $u^k$ with $k$ and integer $\geq 2$,
but general smooth functions $f$ make sense in many applications and have in fact already been considered early on in the mathematical literature \cite{AronsonWeinberger}.

Such equations have a long history.
The study of the ordinary differential (logistic) equation
$u' = qu(1-\frac{1}{K}u)$ with the growth rate $q$ and the carrying capacity $K$, 
modelling population growth began with the work of Verhulst, first in 1838
and then with an analysis of the model in 1845, \cite{Verhulst1838,Verhulst:1845}.
Similar quadratic term additions for interactions between and within species was considered by Lotka beginning in 1913 and later modified in 1920 \cite{Lotka1920a}.
This work was later extended, especially mathematically, by Volterra
\cite{volterra1926variazioni},
giving rise to a coupled system with
linear growth and quadratic interactions that has been used in
modelling of a wide range of application including
biological/ecological systems, business and market competition and chemical kinetics.

This was extended to include ``diffusion'' almost a hundred years later
by Fisher in 1937, \cite{Fisher:1937}  with the specific case of $f(u) = u^2$.
This gives, in one spatial variable,
$u_t - u_{xx} = qu(1-\frac{1}{K}u)$.
Fisher proposed it as a model for the propagation of a mutant gene,
but the number of applications have multiplied considerably.
In this same year Kolmogorov, Petrovsky and Piskunov, \cite{KPP}
introduced the more general reaction-diffusion equation
and the combination of the
models is now referred to as the Fisher-KPP equation.
In another direction, that of chemical reactions, a classical model is
due to Frank-Kamenetskii and Zeldovich, \cite{FrankZeldovich} which
takes $f(u)=u^3$. 
This is the same nonlinearity as in the Allen-Cahn equation modelling phase separation in many physical applications \cite{AllenCahn:1972,PenroseFife:1990}.

The original Fisher paper showed that the eponymous equation on the real line
$x$ has travelling wave solutions of the form $u(x,t) = U(z) = U(x-ct)$ 
where $c$ is the wave speed.
A typical wavefront solution $U$ would have $\lim_{z\to\infty}U(z) = 0$
and $\lim_{z\to-\infty}U(z) = 1$.
Fisher originally showed that such travelling waves exist provided $c\geq 2$
and this had clear import on the intended application as the title of Fisher's
paper was ``The Wave of Advance of Advantageous Genes.''
There has been much further work in this direction including some
exact solutions. One such is $U(z) = 1/(1+ae^{bz})^s$ where $a$, $b$ and $s$
are positive constants.
An excellent, if now partly outdated, survey here is Chapter~11
of \cite{Murray:2002}.

The case of bounded spatial domains has also received attention but at
a more restricted level as travelling wave solutions are the frequently
the sought-after type.
One such case is the Fisher-Stefan case where for example $u(0,t)=0$
and $u_x(s(t),t) = 0$ for some curve $s(t)$, \cite{AblowitzZeppetella}.
See also \cite{TysonBrazhnikPavel}.

Perhaps surprisingly, there are few papers in the  literature
for the Fisher equation where the coefficients $q$ and $K$
are spatially-dependent but there are some with these time-dependent,
see \cite{Hammond}.
Although this is a very obvious {\sc {pde}} question on its own footing,
this can be partly explained given the search
is often for travelling wave solutions.
A generalisation to include spatial dependence of $q(x)$ and $\frac{q}{K}=p(x)$
adds considerable mathematical
complexity including the possibility of either more complex travelling wave
solutions or perhaps none at all.

The large time limit even for the setting \eqref{pq_equation} of general $f$ but with constant coefficients $p$, $q$, has already been studied in \cite{AronsonWeinberger}.
Steady state solutions of the Fisher-KPP equation as well as their relations to travelling wave solutions have been subject of intensive research, see, e.g., \cite{BerestyckiNadinPerthameRyzhik:2009,CabreRoquejoffre:2012} for the non-local setting. 
The same holds true for the steady state solutions of the Allen-Cahn equation, that can be characterised as minimisers of an appropriate energy functional, see, e.g., \cite{AkagiSchimpernaSegatti:2016,NinomiyaHirokazuTaniguchi:2005}; in particular, we point to \cite{DuYangZhou:2020}, \cite[Section 6.4.2]{Jin:2021} for the time-fractional case.

Generalisation of the parabolic setting to a time-fractional one is in fact highly relevant in the mentioned applications, as it allows to extend the classical diffusion model (Fick's law) to one of a so-called anomalous diffusion, thus leading to a non-local, time-fractional equation \eqref{ibvp} below.


\medskip

The 
main task of this paper will be to study reconstruction of the space dependent functions $p(x)$, $q(x)$ from appropriate observations.

Since we have two spatial coefficients $p(x)$ and $q(x)$ to determine we
will require at least two measurements.
These can be obtained from a ``two-run'' situation where we are free to
add a right hand side input $r$ in \eqref{pq_equation} 
with two different sources $r_i(t,x)$ for $i=1,\,2$
and make measurements for each at a later time $T$.

An alternative situation (and one more in line with observing rather than
as an experimental set-up) is for a single source input to measure
data $u(x,t)$ at $t=T_1$ and $t=T_2$; that is ``two time'' measurements.
This corresponds to the usual terminology in the literature for the
recovery of a spatially-dependent unknown from spatial data.
Of course, an obvious extension is to have $N$ sampling points at $T_i$,
for $i=1,\;\ldots\; N$,
and follow Verhulst \cite{Verhulst:1845} with a least squares approach.
However, our focus here is on the mathematical question of what can be achieved
from the minimum amount of information sufficient to guarantee uniqueness.
Some further observation settings are discussed in Section~\ref{subsec:reconstructions}, but not subject to further analysis here.

Multi-coefficient inverse problems for subdiffusion equations have been recently studied, e.g., in \cite{CenZhou:2024,BB4}, which consider the inverse problem of recovering both the unknown, spatially-dependent conductivity $a(x)$ and the potential $q(x)$ from overposed data consisting of the value of solution profiles taken at a later time $T$.

Inverse problems for reaction diffusion equations where the term $f(u)$
has to be recovered also have a history dating back at least 40 years.
Early examples are \cite{AronsonWeinberger,DuChateauRundell1985,PilantRundell1986}.
More recently models involving an unknown $f(u)$ and additional coefficients
to also be determined have been studied in the context of subdiffusion models in place of the parabolic one above:
In \cite{BB5} reconstructing reaction-diffusion systems, in \cite{BB6}
the simultaneous recovery of the spatially-dependent conductivity $a(x)$ and the reaction term $f(u)$, and in \cite{BB7} the reconstruction of a nonlinear conductivity $a(u)$ was considered.
The challenging case of a term $q(x)f(u)$
where both $q$ and $f$ are unknown was studied in \cite{BB18}.
While we assume $f(u)$ to be fixed and known here, its recovery, alongside with $p(x)$ and $q(x)$ could constitute an interesting follow-up project of this paper.

\subsection{The inverse problem}
We aim to determine $q(x)$, $p(x)$ in the initial boundary value problem
\begin{equation}\label{ibvp}
\begin{aligned}
\partial_t^\alpha u_i-\mathbb{L}u_i&=qu_i-pf(u_i)+r_i \text{ in }(0,T)\times\Omega, \\ 
B_i u_i&=a_i \text{ on }(0,T)\times\partial\Omega, \\
u_i&=u^0_i\text{ in }\{0\}\times\Omega\\
&i\in\{1,2\}
\end{aligned}
\end{equation}
from observations
\begin{equation}\label{obs}
g_i(x)=u_i(T,x), \quad x\in\Omega, 
\qquad i\in\{1,2\}
\end{equation}

Alternatively, we consider the experimental setup of applying a single excitation $r(t,x)$ and measuring at two different time instances $T_1$, $T_2$, 
\begin{equation}\label{obs2}
g_i(x)=u(T_i,x), \quad x\in\Omega, 
\qquad i\in\{1,2\}.
\end{equation}

\medskip

Typical examples for $f$ in \eqref{ibvp} are
\[
\begin{aligned}
&f(u)= u^2 \text{\ldots Fisher-KPP}, 
\\
&f(u)= u^3 \text{\ldots Allen Cahn}, 
\end{aligned}
\]
but we will allow for quite general $C^2$ functions $f$ as nonlinearities here.

In \eqref{ibvp}, $\Omega\subseteq\mathbb{R}^d$, $d\in\{1,2,3\}$ is assumed to be a smooth and bounded domain and the operators $B_i$ defining the boundary conditions are assumed to be linear but otherwise quite general; typical examples are Dirichlet, Neumann, or impedance conditions.

Without loss of generality we assume 
\begin{equation}\label{f0fprime0}
f(0)=0,\qquad f'(0)=0,
\end{equation}
for otherwise, in view of the identity 
\[
-p f(u)+qu+r=-p(f(u)-f'(0)u-f(0))-(q+f'(0)p)u+r-f(0)p
\]
the replacements $f\to f(u)-f'(0)u-f(0)$, $q\to q+f'(0)p$, $r\to r-f(0)p$ could be made. Note that we will require $p$ and $f$ to be positive, but do not impose any sign conditions on $q$ and $r$.

The elliptic operator is assumed to be of the form
\begin{equation}\label{ellipticityL}
\mathbb{L}=-\nabla\cdot(D\nabla)+d
\text{ with }d(x)\geq0, \ \lambda_{\min}(D(x))\geq \underline{c}_D>0 \quad \text{for a.e. }x\in\Omega,
\end{equation}
satisfying (along with the boundary conditions) elliptic regularity
\begin{equation}\label{ellipticregularityL}
\|v\|_{H^2(\Omega)}\leq C_{\text{ell}}^\Omega
\Bigl(\|\mathbb{L}v\|_{L^2(\Omega)}
+\|v\|_{L^2(\Omega)}
\Bigr), \quad 
\text{ for all }v\in C^\infty(\Omega), \ Bv=0\text{ on }\partial\Omega. 
\end{equation}
The Djirbashian-Caputo derivative in \eqref{ibvp} can be written as 
\[
(\partial_t^\alpha v)(t) = (k^\alpha*v_t)(t)=\int_0^t k^\alpha(t-s)v_t(s)\, ds
\text{ with }k^\alpha(s)= \frac{1}{\Gamma(1-\alpha)} \, s^{-\alpha}
\]
and satisfies (among other properties) the coercivity estimate \cite[Lemma 4.18]{BBB}
\begin{equation}\label{coercivityDC}
v(t)\,(\partial_t^\alpha v)(t)\geq\frac12 \partial_t^\alpha [v^2](t),
\qquad v\in W^{1,1}(0,T)
\end{equation}

\medskip

\noindent
The remainder of this paper is organised as follows.\\
In Section~\ref{subsec:fixedpoint}, we derive a fixed point scheme for simultaneously reconstructing $p(x)$ and $q(x)$ in the observation settings \eqref{obs} or \eqref{obs2}, whose convergence 
is proven in Section~\ref{subsec:conv-fixedpoint} by a contraction argument for sufficiently large $T$ or $T_2$, while $T_1$ is supposed to be close enough to zero.
This also implies local uniqueness of $p$ and $q$ from the given observations.
Numerical reconstruction results are provided in Section~\ref{subsec:reconstructions}.

\section{Fixed point schemes}\label{subsec:fixedpoint}
We evaluate the PDE in \eqref{ibvp} on the observation set $\{T\}\times\Omega$  
\[
\text{res}_i(p,q)=\text{res}_i:=\partial_t^\alpha u_i(p,q)(T)-r_i(T)-\mathbb{L}g_i\stackrel{!}{=}qg_i-pf(g_i) \text{ in }\Omega
\qquad i\in\{1,2\}
\]
where $u_i(p,q)$ denotes the solution of \eqref{ibvp}, and resolve this 
\begin{itemize}
\item for $p$ by multiplying the $i=1$ equation with $g_2$, the $i=2$ equation with $g_1$ and subtracting;
\item for $q$ by multiplying the $i=1$ equation with $f(g_2)$, the $i=2$ equation with $f(g_1)$ and subtracting.
\end{itemize}
With 
\begin{equation}\label{defdet}
\dete^g(x)=\dete\left(\begin{array}{cc} g_1(x) &-f(g_1(x))\\ g_2(x) &-f(g_2(x))\end{array}\right)
=g_2(x)f(g_1(x))-g_1(x)f(g_2(x)), \quad x\in\Omega, 
\end{equation}
the result of this elimination procedure can be written as the fixed point equation
\begin{equation}\label{defcalT}
\left(\begin{array}{c} p\\q\end{array}\right)
=\frac{1}{\dete^g}
\left(\begin{array}{c} 
g_1\,\text{res}_2(p,q)-g_2\,\text{res}_1(p,q)\\
f(g_1)\,\text{res}_2(p,q)-f(g_2)\,\text{res}_1(p,q)
\end{array}\right)
=:\mathcal{T}(p,q).
\end{equation}
This yields a pointwise in space update formula and thus (as opposed to Newton's method) does not require any basis representation of the unknowns $p$ and $q$.

We here study convergence of the fixed point scheme under the premise of existence of steady state solutions $\uinf_i=\lim_{t\to\infty} u_i(\pact,\qact)(t)$ at the actual coefficients $(\pact,\qact)$, so that the limiting determinant \eqref{defdet} is bounded away from zero
\begin{equation}\label{detgbdd}
|\uinf_2(x)f(\uinf_1(x))-\uinf_1(x)f(\uinf_2(x))|\geq c_d>0\quad x\in\Omega.
\end{equation}

Using the defining PDE for $u_i(p^{(k)},q^{(k)})$ to substitute 
\[
\begin{aligned}
&\partial_t^\alpha u_i(p^{(k)},q^{(k)})(T)-r_i(T)\\
&=(\mathbb{L}+q^{(k)})u_i(p^{(k)},q^{(k)})(T)-p^{(k)}f(u_i(p^{(k)},q^{(k)})(T)),
\end{aligned}
\]
the corresponding fixed point iteration $\left(\begin{array}{c} p^{(k+1)}\\q^{(k+1)}\end{array}\right)=\mathcal{T}(p^{(k)},q^{(k)})$ can be written as a discrepancy driven iteration with increment
\[
\left(\begin{array}{c} \uld{p}^{(k)}\\\uld{q}^{(k)}\end{array}\right)
= \frac{1}{\dete^g}
\left(\begin{array}{c} 
g_1\,\text{dsc}_2^{(k)}-g_2\,\text{dsc}_1^{(k)}\\
f(g_1)\,\text{dsc}_2^{(k)}-f(g_2)\,\text{dsc}_1^{(k)}\\
\end{array}\right),
\]
where 
\[
\text{dsc}_i^{(k)}=(\mathbb{L}+q^{(k)})[u_i(p^{(k)},q^{(k)})(T)-g_i]
-p^{(k)}[f(u_i(p^{(k)},q^{(k)})(T))-f(g_i)].
\]
This allows us to equip the method with a stepsize control for choosing $\mu_k>0$ in 
\[
\left(\begin{array}{c} p^{(k+1)}\\q^{(k+1)}\end{array}\right)=
\left(\begin{array}{c} p^{(k)}\\q^{(k)}\end{array}\right)+
\mu_k\left(\begin{array}{c} \uld{p}^{(k)}\\\uld{q}^{(k)}\end{array}\right).
\] 
An elementary one is to impose a monotone decrease of the data misfit 
\begin{equation}\label{mu_mon}
\mu_k=\max\{\theta^\ell\, : \, \sum_{i=1}^2\|u_i(p^{(k)}+\theta^\ell\uld{p},q^{(k)}+\theta^\ell\uld{q})(T)-g_i\|\leq \sum_{i=1}^2\|u_i(p^{(k)},q^{(k)})(T)-g_i\| 
\}
\end{equation}
for some fixed $\theta\in(0,1)$.
To avoid applying second order space derivatives, one can alternatively compute the increment as 
$\left(\begin{array}{c} \uld{p}^{(k)}\\\uld{q}^{(k)}\end{array}\right)
=\mathcal{T}(p^{(k)},q^{(k)})-\left(\begin{array}{c} p^{(k)}\\q^{(k)}\end{array}\right)$
and compute the next iterate with stepsize control from
\[
\left(\begin{array}{c} p^{(k+1)}\\q^{(k+1)}\end{array}\right)
= (1-\mu_k)\left(\begin{array}{c} p^{(k)}\\q^{(k)}\end{array}\right)
+\mu_k\mathcal{T}(p^{(k)},q^{(k)})
\]

We summarise the scheme in Algorithm~\ref{algo1}.

\begin{algorithm}
\caption{(iterative reconstruction of $p$, $q$)}\label{algo1}
\begin{algorithmic}[1]
\STATE choose tol$>0$, $k_{\max}>0$, $p^{(0)}$, $q^{(0)}$; set $k=0$; 
\WHILE{$k=0\ \vee \ \bigl((k<k_{\max})\wedge(\|(\uld{p},\uld{q})\|>\text{tol})\bigr)$}
\STATE solve \eqref{ibvp} with $p=p^{(k)}$, $q=q^{(k)}$ for $u_i=u_i(p,q)$, $i=1,2$;
\STATE evaluate $\text{res}_i=\partial_t^\alpha u_i(T)-r_i(T)-\mathbb{L}g_i$, $i=1,2$;
\STATE compute 
$\left(\begin{array}{c} \uld{p}\\\uld{q}\end{array}\right)
=\frac{1}{\dete^g}
\left(\begin{array}{c} 
g_1\,\text{res}_2-g_2\,\text{res}_1\\
f(g_1)\,\text{res}_2-f(g_2)\,\text{res}_1
\end{array}\right) - \left(\begin{array}{c} p^{(k)}\\q^{(k)}\end{array}\right)
$
\STATE select step size $\mu\in(0,1]$ (e.g. according to \eqref{mu_mon})
\STATE set $\left(\begin{array}{c} p^{(k+1)}\\q^{(k+1)}\end{array}\right)=
\left(\begin{array}{c} p^{(k)}\\q^{(k)}\end{array}\right)+
\mu_k\left(\begin{array}{c} \uld{p}^{(k)}\\\uld{q}^{(k)}\end{array}\right).
$
\STATE $k\ \to \ k+1$
\ENDWHILE
\end{algorithmic}
\end{algorithm}

Due to contractivity of $\mathcal{T}$, Algorithm~\ref{algo1} stops after finitely many steps $k_*$ with an error norm that is bounded by $C\text{tol}$, where $C$ only depends on the contraction constant.

\medskip

In the alternative observation setting of \eqref{obs2} in place of \eqref{obs}
the fixed point iteration reads exactly like \eqref{defcalT}
\begin{equation}\label{defcalT_til}
\left(\begin{array}{c} p\\q\end{array}\right)
=\frac{1}{\dete^g}
\left(\begin{array}{c} 
g_1\,\widetilde{\text{res}}_2(p,q)-g_2\,\widetilde{\text{res}}_1(p,q)\\
f(g_1)\,\widetilde{\text{res}}_2(p,q)-f(g_2)\,\widetilde{\text{res}}_1(p,q)
\end{array}\right)
=:\widetilde{\mathcal{T}}(p,q)
\end{equation}
just upon redefining the residual as
\[
\widetilde{\text{res}}_i=
:=\partial_t^\alpha u(p,q)(T_i)-r(T_i)-\mathbb{L}g_i \text{ in }\Omega.
\qquad i\in\{1,2\}
\]
According to the results in \cite{frac_pq_decay}, we have convergence of $u_i(T,x)\to\uinf(x)$ as $T\to\infty$ and assume, in place of \eqref{detgbdd},
\begin{equation}\label{detgbdd_til}
|\uinf(x)f(u_0(x))-u_0(x)f(\uinf(x))|\geq c_d>0\quad x\in\Omega,
\end{equation}
which allows to show local uniqueness as well as convergence of the fixed point scheme, provided $T_1$ is close to zero and $T_2$ is large enough.
A similar observation setting was considered for a different two-coefficient identification problem in \cite{CenZhou:2024}.

\section{Local uniqueness and convergence of the fixed point scheme}\label{subsec:conv-fixedpoint}
Well-definedness of the forward operator 
\begin{equation}\label{S}
\mathcal{S}_i:(p,q)\mapsto u_i\text{ solving \eqref{ibvp}},
\end{equation}
and thus well-definedness of the fixed point operator $\mathcal{T}$ follows from 
\cite[Corollary 2.1]{frac_pq_decay}.

To prove a contractivity estimate of $\mathcal{T}$ for large enough $T$ at the actual solution $(\pact,\qact)$, we write the difference of values as 
\begin{equation}\label{diffT}
\mathcal{T}(p,q)-\mathcal{T}(\pact,\qact)
=\frac{1}{\dete^g}
\left(\begin{array}{c} 
g_2\,\partial_t^\alpha \hat{u}_1(T)
-g_1\,\partial_t^\alpha \hat{u}_2(T)\\
f(g_2)\,\partial_t^\alpha \hat{u}_1(T)
-f(g_1)\,\partial_t^\alpha \hat{u}_2(T)
\end{array}\right)
\end{equation}
where $\hat{u}_i=u_i(p,q)-u_i(\pact,\qact)$.

We here focus on the practically relevant low regularity setting for the coefficients
\begin{equation}\label{LrrrLsss}
p\in L^\rrr(\Omega), \qquad q\in L^\sss(\Omega).
\end{equation}
While $\sss\geq2$ sufficies for all results here, we need some further restrictions, related to the growth of $f$, on $\rrr$ cf.  \eqref{growth}, \eqref{cond_r_kappa2_hi} below.

Note that $\dete^g(x)=\uact_2(T,x)f(\uact_1(T,x))-\uact_1(T,x)f(\uact_2(T,x))$ remains bounded away from zero for large $T$, provided \eqref{detgbdd} holds, due to the convergence of $\|\uinf_i-\uact_i(T)\|_{H^2(\Omega)}=\|\usim_i(T)\|_{H^2(\Omega)}$ to zero according to 
\cite[Theorem 2.1 or 2.3]{frac_pq_decay}, where estimates of the type
\begin{equation}\label{decay_usim}
\|\uact_i(t)-\uinf_i\|_{H^s(\Omega)}^2\leq C \decayfunc(t) =
C\begin{cases}t^{-\alpha} &\text{ in case }\alpha\in(0,1)\\
e^{-\omega t} &\text{ in case }\alpha=1\end{cases}
\quad\text{ for all }t\in(0,T),
\end{equation} 
for some $C$, $\omega>0$ independent of $T$ and $s\in\{0,1,2\}$ are shown.
Here, $\uinf_i$ are steady state solutions 
\begin{equation}\label{bvp}
\begin{aligned}
-\mathbb{L}\uinf_i&=\qact\uinf_i-\pact f(\uinf_i)+\rinf_i \text{ in }\Omega \\ 
\Binf \uinf_i&=\ainf_i \text{ on }\partial\Omega. 
\end{aligned}
\end{equation}
Given solutions $\uinf_i$ of \eqref{bvp} (which exist under mild conditions see, e.g., \cite[Theorem 4.4]{Troeltzsch2010}), like in \cite{frac_pq_decay} we impose a sign condition on the potential in the linearisation of the steady state equation \eqref{bvp}
\begin{equation}\label{pinfty}
\pinf_i(x):=\pact(x)f'(\uinf_i(x))-\qact(x)\geq\underline{c}>0\quad x\in\Omega.
\end{equation}
Alternatively to \eqref{pinfty}, the assumption 
\begin{equation}\label{boundpqf_int}
\pact(x)f'(\theta \uact(t,x)+(1-\theta)\uinf(x))-\qact(x)\geq \underline{c}>0 \quad \text{ for a.e. }x\in\Omega, \ t\in(0,T)
\end{equation}
is also considered in \cite{frac_pq_decay}.

For the convenience of the reader, we shortly recap the convergence results from \cite{frac_pq_decay}. (In doing so, we focus on the high state regularity setting  and skip the low state regularity results from \cite{frac_pq_decay}.)
The conditions additional to \eqref{pinfty} or \eqref{boundpqf_int} needed for this purpose are as follows. 
\begin{itemize}
\item $f\in C^2(\mathbb{R})$ with a growth condition
\begin{equation}\label{growth}
|f''(\xi)|\leq c_2+C_2|\xi|^{\kappa_2}, \quad
\xi\in\mathbb{R},
\end{equation}
with $C_2,\,c_2>0$;
\item a related requirement on the summability index $\rrr$ of the function $p$
\begin{equation}\label{cond_r_kappa2_hi}
\begin{aligned}
&d=1:\quad&& \rrr\geq2 \quad&& \kappa_2\in[0,\infty]\\
&d=2:\quad&& \rrr>2 \quad&& \kappa_2\in[0,\infty)\\
&d=3:\quad&& \rrr\geq6 \quad&& \kappa_2\in[0,2-\tfrac{6}{\rrr}],
\end{aligned}
\end{equation}
\item a decay condition on $r-\rinf$ 
\begin{equation}\label{decay_r}
\|r(t)-\rinf\|_{L^2(\Omega)}^2\leq C_r 
\decayfunc(t)
\quad t>0
\end{equation} 
with $\decayfunc$ as in \eqref{decay_usim}, $\omega=c_1>0$. 
\end{itemize}
For simplicity, we restrict the setting to Dirichlet or Neumann conditions with time constant data, that is 
\begin{equation}\label{bndy_const}
a(t,x)=\ainf(x), \quad B u=\Binf u=\partial_\nu u\text{ or }B u=\Binf u=u, 
\end{equation}

\begin{theorem}[Theorem 2.1 in \cite{frac_pq_decay}]\label{thm:decay_usim}
For $u_0\in H^1(\Omega)$, $f\in C^2(\mathbb{R})$, under  conditions \eqref{f0fprime0} \eqref{ellipticityL}, \eqref{ellipticregularityL},  
\eqref{pinfty}, \eqref{growth}, \eqref{cond_r_kappa2_hi}, 
$r\in L^\infty(0,\infty;L^2(\Omega))$, \eqref{decay_r}, \eqref{bndy_const},
there exist $\rho_0>0$, $C>0$, $0<c_0<c_1$ independent of $T$ such that $\usim=u-\uinf$ (where $u$ solves \eqref{ibvp} and $\uinf$ solves \eqref{bvp}) satisfies the bound
\begin{equation}\label{bound_usim}
\begin{aligned}
&\frac{c}{2}\, \left(k^{1-\alpha}*\|u-\uinf\|_{H^2(\Omega)}^2\right)(t)
+\|u(t)-\uinf\|_{H^1(\Omega)}^2\\
&\leq \|u(0)-\uinf\|_{H^1(\Omega)}^2 + \frac{C_r}{\Gamma(\alpha)\,\alpha(1-\alpha)} 
\end{aligned}
\end{equation} 
as well as the decay estimate
\eqref{decay_usim}
provided 
\begin{equation}\label{smallness_init}
\|u(0)-\uinf\|_{H^1(\Omega)}<\rho_0. 
\end{equation}
In case 
$T=\infty$, we also have 
\begin{equation}\label{decay_usim_Tauber}
\|\partial_t^\alpha u(t)\|_{L^2(\Omega)}^2+\|u(t)-\uinf\|_{H^2(\Omega)}^2\lesssim C t^{-\alpha}
\quad \text{ as }t\to\infty.
\end{equation} 
Here $\eta(t) \lesssim C\,t^{-\gamma}$ as $t\to\infty$ means 
$\limsup_{t\to\infty} \eta(t)\, t^\gamma\leq C$.
\end{theorem}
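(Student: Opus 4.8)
The plan is to study the deviation $\usim=u-\uinf$ from the steady state by deriving a dissipative fractional evolution equation for it, running an energy estimate at the $H^1$/$H^2$ level that is closed by the smallness \eqref{smallness_init}, and then fractionally integrating to obtain first the uniform bound \eqref{bound_usim} and afterwards the decay rate \eqref{decay_usim}. First I would subtract the steady-state problem \eqref{bvp} from \eqref{ibvp} at the actual coefficients $(\pact,\qact)$; since $\partial_t^\alpha\uinf=0$, a second-order Taylor expansion of $f$ about $\uinf$ yields
\begin{equation*}
\partial_t^\alpha\usim-\mathbb{L}\usim+\pinf\,\usim=-\tfrac12\,\pact\,f''(\xi)\,\usim^2+(r-\rinf),\qquad B\usim=0,\quad\usim(0)=u_0-\uinf,
\end{equation*}
where $\xi=\xi(t,x)$ lies between $\uinf$ and $u$, and $\pinf=\pact f'(\uinf)-\qact\geq\underline c>0$ by \eqref{pinfty}; the boundary condition is homogeneous because $u$ and $\uinf$ share the data $\ainf$ under \eqref{bndy_const}.

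\emph{Energy estimate.} I would test this equation against $\usim$ and against $\mathbb{L}\usim$ and exploit the dissipativity of the elliptic operator \eqref{ellipticityL}. The coercivity estimate \eqref{coercivityDC}, applied to $\usim$ and (using that $D$ is symmetric and time independent) to $\nabla\usim$, converts the memory terms into $\tfrac12\partial_t^\alpha$ of the $H^1$-energy; the principal part produces the dissipation $\|\mathbb{L}\usim\|_{L^2(\Omega)}^2$, which controls $\|\usim\|_{H^2(\Omega)}^2$ through elliptic regularity \eqref{ellipticregularityL}; and the potential term $\pinf\usim$ is nonnegative by \eqref{pinfty} up to lower-order cross terms absorbed by Young's inequality. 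This yields a differential inequality of the form
\begin{equation*}
\tfrac12\,\partial_t^\alpha\|\usim\|_{H^1(\Omega)}^2+c\,\|\usim\|_{H^2(\Omega)}^2\leq\bigl|\langle\tfrac12\pact f''(\xi)\usim^2,\mathbb{L}\usim\rangle\bigr|+C\,\|r-\rinf\|_{L^2(\Omega)}^2.
\end{equation*}

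\emph{Nonlinear absorption (the crux).} The decisive step is to bound the quadratic remainder so that it can be absorbed into $c\|\usim\|_{H^2}^2$. Since it is paired with $\mathbb{L}\usim\in L^2$, it suffices to estimate $\|\pact f''(\xi)\usim^2\|_{L^2(\Omega)}$; using the growth bound $|f''(\xi)|\leq c_2+C_2|\xi|^{\kappa_2}$ from \eqref{growth}, $\pact\in L^\rrr(\Omega)$, and the Sobolev embeddings available for $d\leq3$, a H\"older argument gives a bound of the form $C\,\Phi(\|\usim\|_{H^2})\|\usim\|_{H^2}^2$ with $\Phi(s)\to0$ as $s\to0$, the summability/growth constraints \eqref{cond_r_kappa2_hi} being exactly what makes the relevant exponents close. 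Under \eqref{smallness_init} this factor is smaller than $c/2$, so the term is absorbed on the left. The main obstacle here is that this absorption is only valid while $\usim$ stays small, so it must be combined with a continuation argument: one shows that on any interval where the smallness holds the energy cannot grow past its initial size plus the (bounded) forcing contribution, whence the smallness \eqref{smallness_init} propagates to all $t$ and the estimate closes globally.

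\emph{Integration and decay.} Applying the Riemann--Liouville integral $I^\alpha=k^{1-\alpha}*\,\cdot\,$ to the reduced inequality and using $I^\alpha\partial_t^\alpha\|\usim\|_{H^1}^2=\|\usim(t)\|_{H^1}^2-\|\usim(0)\|_{H^1}^2$ produces \eqref{bound_usim}, the forcing constant stemming from the uniform-in-$t$ bound on $I^\alpha\decayfunc$ (a Beta-function computation for $\decayfunc(t)=t^{-\alpha}$) together with \eqref{decay_r}. For the decay \eqref{decay_usim} I would compare $E(t)=\|\usim(t)\|_{H^1}^2$ against the scalar fractional relaxation $\partial_t^\alpha E+cE\leq C\decayfunc$ by a fractional Gr\"onwall inequality, whose solution inherits the rate of $\decayfunc$, namely $t^{-\alpha}$ for $\alpha\in(0,1)$ (Mittag-Leffler decay) and $e^{-\omega t}$ for $\alpha=1$. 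Finally, for $T=\infty$ the sharp asymptotics \eqref{decay_usim_Tauber} follow from a Tauberian analysis of the Laplace-transformed equation, bounding $\|\partial_t^\alpha u\|_{L^2}=\|\partial_t^\alpha\usim\|_{L^2}$ directly through the equation by $\|\usim\|_{H^2}$ and the forcing, so that it inherits the same $t^{-\alpha}$ rate.
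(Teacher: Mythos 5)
Your proposal is sound and follows essentially the same route that the paper attributes to \cite{frac_pq_decay} and mirrors in Section~\ref{subsec:conv-fixedpoint} for the analogous $\hat{u}_i$ estimates: subtracting the steady state, testing with a combination $\mu\usim-\mathbb{L}\usim$, invoking the Caputo coercivity \eqref{coercivityDC} and elliptic regularity \eqref{ellipticregularityL}, absorbing the quadratic remainder via \eqref{growth} and \eqref{cond_r_kappa2_hi} under the smallness \eqref{smallness_init} closed by a barrier (continuation) argument, and concluding decay from a fractional comparison lemma. Note only that the paper does not itself prove this theorem --- it is imported verbatim from \cite{frac_pq_decay} --- so the comparison is necessarily with the proof strategy the paper recapitulates rather than with a proof given in the text.
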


\begin{theorem}[Theorem 2.2 in \cite{frac_pq_decay}]\label{prop:decay_usim_t} 
Let the conditions of Theorem~\ref{thm:decay_usim} and additionally 
\begin{equation}\label{utinit}
u_t(0)\in L^2(\Omega), \quad u_t(t_0)\in H^1(\Omega) \ \text{ for some }t_0\geq0
\end{equation}
be satisfied.
\begin{itemize}
\item[(a)] If $T=\infty$, $\alpha=1$ and 
$r_t\in L^2(0,\infty;L^2(\Omega))$, with $\|r_t(t)\|_{L^2(\Omega)}\to0$ as $t\to\infty$,
then  $\|u_t(t)\|_{H^1(\Omega)}\to0$ as $t\to\infty$.
\item[(b)] If $T=\infty$, $\alpha\in(0,1]$ and $\|r_t(t)\|_{L^2(\Omega)}^2=O(t^{-\beta})$ then there exists $C>0$ independent of $t_0$, such that
\[
\|u_t(t)\|_{H^2(\Omega)}^2\lesssim C (t-t_0)^{-\min\{\alpha,\beta\}}.
\]
\item[(c)] If 
$\|r_t(t)\|_{L^2(\Omega)}^2\leq C_r 
\decayfunc(t)$
for some $\omega:=c_1>0$, 
then there exist $C,\,\omega>0$ independent of $T$, $t_0$, such that 
\begin{equation}\label{expdecay_u_t}
\|u_t(t)\|_{H^2(\Omega)}^2\leq C 
\decayfunc(t-t_0)
\quad t\in(t_0,T)
\end{equation} 
\end{itemize}
\end{theorem}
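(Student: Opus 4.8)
The plan is to reduce the statement to an energy analysis of an evolution equation for $w:=u_t$ having the same structure as the one that governs $\usim=u-\uinf$ in Theorem~\ref{thm:decay_usim}. Formally differentiating the state equation in \eqref{ibvp} in time, and using that the Djirbashian--Caputo derivative commutes with $\partial_t$ — which is exactly where the regularity hypothesis \eqref{utinit} on $u_t(0)$ enters, so that $\partial_t\partial_t^\alpha u=\partial_t^\alpha u_t$ holds in a suitable weak sense — yields
\[
\partial_t^\alpha w-\mathbb{L}w+\bigl(\pact f'(u)-\qact\bigr)\,w=r_t,
\]
with homogeneous boundary data and initial value $w(0)=u_t(0)$. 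The time-dependent potential $V(t):=\pact f'(u(t))-\qact$ now plays the role of the linearised potential $\pinf$, the source $r_t$ plays the role of $r-\rinf$, and the essential structural simplification is that $w$ relaxes to the \emph{zero} steady state rather than to a nonzero profile. If $u_t(0)$ is merely in $L^2(\Omega)$, the (sub)diffusive smoothing lets one start the higher-order estimate at a time $t_0>0$ at which $u_t(t_0)\in H^1(\Omega)$; this is the origin of the shifts $t-t_0$ in the statement.

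First I would establish uniform coercivity of the zeroth-order term. Since Theorem~\ref{thm:decay_usim} guarantees $u(t)\to\uinf$ with $\|u(t)-\uinf\|_{H^1(\Omega)}$ staying below $\rho_0$ and decaying, $V(t)$ converges to $\pinf\geq\underline{c}>0$, so that either directly from \eqref{pinfty} or from the path condition \eqref{boundpqf_int} one gets $V(t,x)\geq\underline{c}/2>0$ once $u$ has entered the relevant neighbourhood, i.e. for $t\geq t_0$. Because this lower bound and the constants in Theorem~\ref{thm:decay_usim} are independent of the time origin, the final constant $C$ can be taken independent of $t_0$, as claimed.

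Next I would test the $w$-equation with $w$ (for the $L^2$/$H^1$ level) and with $\mathbb{L}w$ (for the $H^2$ level). Using the coercivity estimate \eqref{coercivityDC} to pass from $\langle\partial_t^\alpha w,w\rangle$ to $\tfrac12\partial_t^\alpha\|w\|_{L^2(\Omega)}^2$, its self-adjoint analogue at the $H^1$-energy level, the ellipticity \eqref{ellipticityL} (giving, as in the proof of Theorem~\ref{thm:decay_usim}, a coercive $H^1$-contribution from the elliptic part), the elliptic regularity \eqref{ellipticregularityL}, and absorbing the coercive contribution $\langle Vw,w\rangle\geq\tfrac{\underline c}{2}\|w\|_{L^2(\Omega)}^2$, one reduces matters to a fractional differential inequality of the form $\partial_t^\alpha E(t)+\lambda E(t)\leq C\,\|r_t(t)\|_{L^2(\Omega)}^2$ with $E(t)=\|w(t)\|_{H^2(\Omega)}^2$. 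The step I expect to be the main obstacle is controlling the cross term $\langle Vw,\mathbb{L}w\rangle$ in the $H^2$-estimate under the low regularity \eqref{LrrrLsss}: $\|Vw\|_{L^2(\Omega)}$ must be dominated by a small multiple of $\|w\|_{H^2(\Omega)}$ plus lower-order terms. This is done by splitting $V=\pact f'(u)-\qact$ and applying H\"older's inequality together with the Sobolev embeddings available in dimension $d$, estimating $f'(u)$ through the growth bound \eqref{growth} and using the summability of $p\in L^\rrr(\Omega)$, $q\in L^\sss(\Omega)$; this is precisely the coupling between $\rrr$, $\kappa_2$ and $d$ recorded in \eqref{cond_r_kappa2_hi}.

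Finally, I would close the argument by a fractional Gr\"onwall/comparison estimate applied to $\partial_t^\alpha E+\lambda E\leq C\,\|r_t\|_{L^2(\Omega)}^2$. For $\alpha\in(0,1)$ the subdiffusive memory caps the attainable rate at $t^{-\alpha}$ regardless of how fast the forcing decays, which produces the exponent $\min\{\alpha,\beta\}$ in part~(b) and the decay $\decayfunc(t-t_0)$ in part~(c); for $\alpha=1$ the inequality is a classical Gr\"onwall estimate giving the exponential rate. Part~(a) is the borderline case in which only $r_t\in L^2(0,\infty;L^2(\Omega))$ with $\|r_t(t)\|_{L^2(\Omega)}\to0$ is assumed: integrating the $L^2$-energy inequality shows $\int_0^\infty\|w(t)\|_{H^1(\Omega)}^2\,dt<\infty$, while a further estimate at the $H^1$-energy level bounds $\tfrac{d}{dt}\|w(t)\|_{H^1(\Omega)}^2$, so that the elementary fact that a nonnegative integrable function with controlled derivative tends to zero yields $\|u_t(t)\|_{H^1(\Omega)}\to0$.
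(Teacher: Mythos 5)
The first thing to note is that this paper does not actually prove Theorem~\ref{prop:decay_usim_t}: it is quoted verbatim from \cite{frac_pq_decay} as part of a recap, so there is no in-paper proof to compare against line by line. Your strategy, however, is exactly the template that this paper deploys for the closely analogous estimate on $\hat{u}_i$ and $\partial_t^\altil\hat{u}_i$ (equations \eqref{ibvp_uhat_t_inf}--\eqref{enest_uhat_t}): differentiate the equation in time, treat $V(t)=\pact f'(u(t))-\qact$ as a perturbation of the coercive potential $\pinf$ via \eqref{pinfty} or \eqref{boundpqf_int}, test with $\mu w-\mathbb{L}w$, use \eqref{coercivityDC}, \eqref{ellipticityL}, \eqref{ellipticregularityL} and the growth/summability conditions \eqref{growth}, \eqref{cond_r_kappa2_hi} to control $\|Vw\|_{L^2(\Omega)}$ by a small multiple of $\|w\|_{H^2(\Omega)}$, and close with a fractional comparison lemma (the role played by \cite[Lemma 3.2]{frac_pq_decay} here). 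The observation that $w=u_t$ relaxes to zero rather than to a nonzero steady state, and that $t_0$ enters through parabolic/subdiffusive smoothing of $u_t(0)\in L^2(\Omega)$ up to $u_t(t_0)\in H^1(\Omega)$, is also the intended reading of \eqref{utinit}.

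One step is stated incorrectly and would lose a term if taken literally: the Djirbashian--Caputo derivative does \emph{not} commute with $\partial_t$. Since $\partial_t^\alpha u=k^\alpha*u_t$, differentiating gives $\partial_t(\partial_t^\alpha u)(t)=\partial_t^\alpha(u_t)(t)+k^\alpha(t)\,u_t(0)$, so the equation for $w=u_t$ carries the additional source $-k^\alpha(t)u_t(0)$, singular at $t=0$ and decaying like $t^{-\alpha}$. This is not a harmless technicality to be absorbed into ``a suitable weak sense'': it is precisely the term the present paper keeps explicitly as $\chi_{\altil=1}k^\alpha(t)\hat{u}_{i,t}(0)$ in $\mathrm{rest}_i$, it is where the hypothesis $u_t(0)\in L^2(\Omega)$ is genuinely used, and its $t^{-2\alpha}$ contribution to the squared forcing is one of the reasons the attainable rate is capped at $\decayfunc(t)=t^{-\alpha}$ for $\alpha<1$ independently of how fast $r_t$ decays. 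Your final Gr\"onwall step also acts on the wrong quantity: the energy identity one actually obtains has the form $c_*^2\,\partial_t^\alpha\|w(t)\|_{H^1(\Omega)}^2+c\,\|w(t)\|_{H^2(\Omega)}^2\leq(\dots)$, i.e.\ the fractional derivative falls on the $H^1$-energy while the $H^2$-norm appears undifferentiated, and the $H^2$ decay in \eqref{expdecay_u_t} is then extracted from the comparison lemma rather than from $\partial_t^\alpha\|w\|_{H^2}^2+\lambda\|w\|_{H^2}^2\le g$. With these two corrections the argument goes through as you outline it; the remaining parts (the Barbalat-type argument for (a), the $\min\{\alpha,\beta\}$ exponent in (b), the uniformity of constants in $t_0$ and $T$) are consistent with the framework of Theorems~\ref{thm:decay_usim} and \ref{thm:decay_usim_nosmallness}.
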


\begin{theorem}[Theorem 2.3 in \cite{frac_pq_decay}]\label{thm:decay_usim_nosmallness}
The results of Theorems~\ref{thm:decay_usim}, \ref{prop:decay_usim_t} remain valid if \eqref{pinfty} is replaced by \eqref{boundpqf_int} and the smallness condition \eqref{smallness_init} on $\uact_i(0)-\uinf_i$ is dropped.
\end{theorem}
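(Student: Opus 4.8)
The plan is to re-run the energy arguments behind Theorems~\ref{thm:decay_usim} and \ref{prop:decay_usim_t} and to verify that the sole role of \eqref{pinfty} together with the smallness hypothesis \eqref{smallness_init} is to guarantee positivity of the linearised potential; once that positivity is supplied unconditionally by \eqref{boundpqf_int}, smallness becomes superfluous. First I would write down the error equation for $\usim=\uact-\uinf$: subtracting the steady-state problem \eqref{bvp} from \eqref{ibvp} at the actual coefficients, and using that $\uinf$ is time-independent (so $\partial_t^\alpha\uact=\partial_t^\alpha\usim$), the exact mean-value identity $f(\uact)-f(\uinf)=\bar f'\,\usim$ with $\bar f'(t,x)=\int_0^1 f'(\theta\uact(t,x)+(1-\theta)\uinf(x))\,d\theta$ gives
\[
\partial_t^\alpha\usim-\mathbb{L}\usim+\bigl(\pact\bar f'-\qact\bigr)\usim=r-\rinf .
\]
Integrating \eqref{boundpqf_int} over $\theta\in[0,1]$ yields exactly $\pact\bar f'-\qact\geq\underline c>0$ pointwise in $(t,x)$, which is precisely the lower bound that \eqref{pinfty} produced only at $\uinf$, and only along the path via continuity of $f'$ under smallness.

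Next I would trace every appearance of the linearised potential in the two proofs. In the basic $H^1$ estimate (test the error equation with $\usim$, invoke the coercivity \eqref{coercivityDC} for the fractional term and the coercivity of $\mathbb{L}$) the potential contributes the damping $\int_\Omega(\pact\bar f'-\qact)\usim^2\geq\underline c\|\usim\|_{L^2(\Omega)}^2$, which is exactly what drives the decay \eqref{bound_usim}, \eqref{decay_usim}; under \eqref{boundpqf_int} it is available for arbitrary initial data. For the time-differentiated equation underlying Theorem~\ref{prop:decay_usim_t}, setting $w=u_t$ gives $\partial_t^\alpha w-\mathbb{L}w+(\pact f'(\uact)-\qact)w=r_t$, whose zeroth-order coefficient is the $\theta=1$ endpoint of \eqref{boundpqf_int} and hence again bounded below by $\underline c$. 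Thus both the $\theta$-averaged coefficient (for $\usim$) and the endpoint coefficient (for $u_t$) are covered uniformly in $t,x$, so the coercivity in all energy identities is preserved without \eqref{smallness_init}.

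It then remains to confirm that smallness played no other part. The higher-regularity estimates ($H^2$ in \eqref{bound_usim}, \eqref{decay_usim_Tauber} and the $u_t$-bounds \eqref{expdecay_u_t}) test with $-\mathbb{L}\usim$ (resp.\ $-\mathbb{L}w$) and, after elliptic regularity \eqref{ellipticregularityL}, must absorb $\int_\Omega \mathbb{L}\usim\,(\pact\bar f'-\qact)\usim$ by Young's inequality into the $\|\mathbb{L}\usim\|_{L^2(\Omega)}^2$ term; the residual $\|(\pact\bar f'-\qact)\usim\|_{L^2(\Omega)}^2$ is bounded by $\|\usim\|_{H^1(\Omega)}^2$ times a constant depending only on the a priori $H^1$-norm of $\uact$, using the growth bound \eqref{growth} (which via \eqref{f0fprime0} gives $|f'(\xi)|\lesssim|\xi|+|\xi|^{\kappa_2+1}$) together with the summability/embedding budget \eqref{cond_r_kappa2_hi} relating $\rrr$, $\kappa_2$ and the Sobolev exponent. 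Crucially this uses the a priori $H^1$ bound, never smallness. The remaining steps—the fractional Grönwall estimate and, for $\alpha\in(0,1)$, the Tauberian passage to \eqref{decay_usim}, \eqref{decay_usim_Tauber}—are then verbatim as in \cite{frac_pq_decay}. The main obstacle is exactly this last bookkeeping: one must certify that in the \eqref{pinfty}-based proof smallness was used \emph{only} to localise the potential near $\uinf$, and not to render a genuinely quadratic remainder absorbable. Because the mean-value linearisation above is exact (no remainder) and the unconditional damping $\underline c\|\usim\|_{L^2(\Omega)}^2$ is uniform in $T$, no such quadratic smallness is in fact required, which is precisely what legitimises dropping \eqref{smallness_init}.
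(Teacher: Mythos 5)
First, a caveat: the paper does not actually prove Theorem~\ref{thm:decay_usim_nosmallness}; it is quoted verbatim from \cite{frac_pq_decay} (Theorem 2.3 there), so the only material to compare your argument against is the analogous machinery deployed in Section~\ref{subsec:conv-fixedpoint}. Your central observation is the right one and is surely the point of the cited result: averaging \eqref{boundpqf_int} over $\theta\in[0,1]$ gives the exact mean-value potential $\pact\bar f'-\qact\geq\underline c$ pointwise in $(t,x)$, so the coercivity that \eqref{pinfty} delivers only near $\uinf$ (and hence only under \eqref{smallness_init} plus a continuation/barrier argument) becomes unconditional, and the $\theta=1$ endpoint covers the time-differentiated equation.

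Where your argument has a genuine gap is the final bookkeeping claim that smallness was used \emph{only} to localise the potential. In the analogous estimates of this paper (see \eqref{enest_uhat_1}--\eqref{D1D2} and the discussion following them), smallness of $\|\usim_i(0)\|_{H^1(\Omega)}$ is used a second time: to force $C_{i,*}(D_1(0)+D_2(0)+D_3(0))\leq 1/(2C_{\text{ell}}^\Omega)$ so that the right-hand side terms carrying a factor $\|\cdot\|_{H^2(\Omega)}$ can be absorbed into the left, with a barrier argument propagating this for all $t>0$. Your claim that $\|(\pact\bar f'-\qact)\usim\|_{L^2(\Omega)}$ is controlled by $\|\usim\|_{H^1(\Omega)}$ times a constant depending only on a priori $H^1$ bounds does not close under \eqref{cond_r_kappa2_hi} in $d=3$: with $\pact\in L^{6}(\Omega)$, $\kappa_2$ up to $1$, and only $H^1\hookrightarrow L^6$, the worst term $\pact\,|\usim|^{\kappa_2+1}\,|\usim|$ requires $\usim\in L^9(\Omega)$ (or $L^\infty$, i.e.\ $H^2$), which is exactly the quantity being estimated. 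So either one must first establish a uniform-in-time $H^2$ bound on $\usim$ by keeping the full coercive potential on the left-hand side --- which raises its own difficulty, since $\pact\bar f'-\qact$ is then time-dependent and need not lie in $L^\infty$ when $\pact\in L^\rrr$ only, whereas the constants \eqref{cstar} use $\|\pinf_i\|_{L^\infty(\Omega)}$ --- or the absorption still requires a smallness of some constant. As written, your proof does not resolve this, so the claim that \eqref{smallness_init} can be dropped is not yet justified by the argument you give.
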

 
According to \cite[Remark 2.1]{frac_pq_decay}, under the conditions of Theorem~\ref{prop:decay_usim_t} (d) (or the corresponding part of Theorem~\ref{thm:decay_usim_nosmallness}), also decay of 
\[
\|\partial_t^\alpha u(t)\|_{H^2(\Omega)}
\leq \tilde{C} t^{1-3\alpha/2}
\]
holds for $\alpha\in(\frac23,1)$.

\medskip

Returning to the contractivity proof for $\mathcal{T}$, cf., \eqref{diffT}, our task is thus to estimate $\partial_t^\alpha \hat{u}_i(T)$ appearing in \eqref{diffT} in terms of a small multiple of $\hat{p}$, $\hat{q}$, where we use the decay of $\partial_t^\alpha \hat{u}_i$ with time in order to make the factor small.  
Here we estimate the decay of $\partial_t^\alpha \hat{u}_i(t)$ by means of energy methods. 
Note that the approach from \cite{BB2} using explicit solution representations via Mittag-Leffler functions does not seem to be amenable here, due to the nonlinearity/time dependence of PDE coefficients.


\medskip

We can make use of the decay estimates of 
Theorems~\ref{thm:decay_usim}, \ref{prop:decay_usim_t}, \ref{thm:decay_usim_nosmallness}
with the replacements
\begin{equation}\label{replacements}
\text{$p$, $q$, $\uinf$, $u$, replaced by $\pact$, $\qact$, $\uinf_i$, $\uact_i$};
\end{equation}
That is, we assume existence of steady states for the actual coefficients $\pact$, $\qact$, satisfying
\[
\begin{aligned}
-\mathbb{L}\uinf_i&=\qact\uinf_i-\pact f(\uinf_i)+\rinf_i \text{ in }\Omega \\ 
\Binf \uinf_i&=\ainf_i \text{ on }\partial\Omega. 
\end{aligned}
\]
$i\in\{1,2\}$, but not necessarily for other $p$, $q$ (such as the iterates generated by the fixed point scheme).
 
To this end, in the spirit of 
Theorem~\ref{thm:decay_usim}, Proposition~\ref{prop:decay_usim_t}, 
we consider estimates relying on a positivity assumption \eqref{pinfty} only at the steady states $\uinf_i(x)=\lim_{t\to\infty}\uact_i(t,x)$ 
and on decay of the difference $\usim_i(t,x)=\uact_i(t,x)-\uinf_i(x)$ according to 
\cite[Theorems~2.1,2.2]{frac_pq_decay}.
The difference $\hat{u}_i=u_i(p,q)-u_i(\pact,\qact)=u_i-\uact_i$ and its time derivative of order $\altil\in\{\alpha,1\}$ satisfies
\begin{equation}\label{ibvp_uhat_inf}
\begin{aligned}
\partial_t^\alpha \hat{u}_i+(-\mathbb{L}+\pinf_i)\hat{u}_i&+\text{rest}_{0,i}=\hat{q}(\uact_i+\hat{u}_i)-\hat{p}f(\uact_i+\hat{u}_i) \text{ in }(0,T)\times\Omega, \\ 
B_i \hat{u}_i&=0 \text{ on }(0,T)\times\partial\Omega, \\
\hat{u}_i&=0\text{ in }\{0\}\times\Omega.
\end{aligned}
\end{equation}
and
\begin{equation}\label{ibvp_uhat_t_inf}
\begin{aligned}
\partial_t^\alpha \,\partial_t^\altil \hat{u}_i+ (-\mathbb{L}+\pinf_i)\partial_t^\altil \hat{u}_i 
&\,+\text{rest}_i\\
=&\,\hat{q}\partial_t^\altil (\uact_i+\hat{u}_i)-\hat{p}\,\partial_t^\altil[f(\uact_i+\hat{u}_i)] 
\text{ in }(0,T)\times\Omega, \\ 
B_i \partial_t^\altil\hat{u}_i=&\,0 \text{ on }(0,T)\times\partial\Omega, \\
\end{aligned}
\end{equation}
with 
\[
\begin{aligned}
&\tilde{\bar{f}}_i(t,x) = \int_0^1 f'(\uact_i(t,x)+\theta\hat{u}_i(t,x))\,d\theta-f'(\uinf_i(x))\\
&\phantom{\tilde{\bar{f}}_i(t,x)}= \int_0^1\int_0^1f''(\uinf_i(x)+\theta\usim_i(t,x)+s\theta\hat{u}_i(t,x))(\usim_i(t,x)+s\hat{u}_i(t,x))\, ds\,d\theta\\
&\hat{\bar{f}}_i(t,x) = f'(u_i(t,x))-f'(\uact(t,x))
=\int_0^1f''(\uact_i(t,x)+\theta\hat{u}_i)\,d\theta\,\hat{u}_i(t,x)\\
&\bar{\bar{f}}_i(t,x) = f'(\uact_i(t,x))-f'(\uinf_i(x))
=\int_0^1f''(\uinf_i(x)+\theta\usim_i(t,x))\,d\theta\,\usim_i(t,x)\\
&\text{rest}_{0,i}=\pact\,\tilde{\bar{f}}_i\,\hat{u}_i\\
&\text{rest}_i
=\pact\,k^\altil*\left(\hat{\bar{f}}_i\, u_{i,t} + \bar{\bar{f}}_i\, \hat{u}_{i,t}\right) =\pact\,k^\altil*\left(\hat{\bar{f}}_i\, \uact_{i,t} + (\hat{\bar{f}}_i+\bar{\bar{f}}_i)\, \hat{u}_{i,t}\right)+\chi_{\altil=1}k^\alpha(t)\hat{u}_{i,t}(0). 
\end{aligned}
\]
Note that initial conditions on $\partial_t^\altil\hat{u}_i$ can be bootstrapped from the PDE only in case $\altil=\alpha$, in which we have $\partial_t^\alpha\hat{u}_i(0)=\hat{q}u_{0,i}-\hat{p}f(u_{0,i})$.

To derive the equations \eqref{ibvp_uhat_inf}, \eqref{ibvp_uhat_t_inf} we have used the identities
\[
\begin{aligned}
&p f(u_i)-\pact f(\uact_i)-\pact f'(\uinf_i)(u_i-\uact_i)-(p-\pact)f(u_i)\\
&=\pact\bigl(f(u_i)-f(\uact_i)-f'(\uinf_i)(u_i-\uact_i)\bigr)
\end{aligned}
\]
\[
\begin{aligned}
&p \partial_t^\altil f(u_i)-\pact \partial_t^\altil f(\uact_i)-\pact f'(\uinf_i)\partial_t^\altil(u_i-\uact_i)-(p-\pact)\partial_t^\altil f(u_i)\\
&=\pact k^\altil*\bigl(f'(u_i)u_{i,t}-f'(\uact_i)\uact_{i,t}-f'(\uinf_i)(u_{i,t}-\uact_{i,t})\\
&=\pact k^\altil*\bigl((f'(u_i)-f'(\uact_i))u_{i,t}+(f'(\uact_i)-f'(\uinf_i))(u_{i,t}-\uact_{i,t})\bigr)
\end{aligned}
\]

\medskip

Analogously to 
the proofs of \cite[Theorems~2.1,2.2]{frac_pq_decay},
we test 
\eqref{ibvp_uhat_inf} with $\mu\hat{u}_i-\mathbb{L}\hat{u}_i$ and 
\eqref{ibvp_uhat_t_inf} with $\mu\partial_t^\altil \hat{u}_i-\mathbb{L}\partial_t^\altil \hat{u}_i$, 
to obtain, using \eqref{ellipticityL}, \eqref{ellipticregularityL}, \eqref{coercivityDC},
with
\begin{equation}\label{cstar}
c_{i,*}^2=\min\left\{\frac{1+2\|\pinf_i\|_{L^\infty(\Omega)}^2}{2\underline{c}},\, \underline{c}_D\right\}, \qquad
C_{i,*}^2=\left(\frac{1+2\|\pinf_i\|_{L^\infty(\Omega)}^2}{2\underline{c}^2}+2\right)
\end{equation}
that
\begin{equation}\label{enest_uhat}
\begin{aligned}
&c_{i,*}^2\, \partial_t^\alpha\|\hat{u}_i(t)\|_{H^1(\Omega)}^2
+\frac{1}{2(C_{\text{ell}}^\Omega)^2}\, \|\hat{u}_i(t)\|_{H^2(\Omega)}^2
\\
&\leq 
C_{i,*}^2\,
\|\hat{p}f(\uact_i+\hat{u}_i)-\hat{q}(\uact_i+\hat{u}_i)
+\pact\,\tilde{\bar{f}}_i\,\hat{u}_i\|_{L^2(\Omega)}^2,
\end{aligned}
\end{equation}
and
\begin{equation}\label{enest_uhat_t}
\begin{aligned}
&c_{i,*}^2\, \partial_t^\alpha\|\partial_t^\altil\hat{u}_i(t)\|_{H^1(\Omega)}^2
+\frac{1}{2(C_{\text{ell}}^\Omega)^2}\, \|\partial_t^\altil\hat{u}_i(t)\|_{H^2(\Omega)}^2
\\
&\leq 
C_{i,*}^2\,
\|\hat{p}\,\partial_t^\altil[f(\uact_i+\hat{u}_i)]-\hat{q}\partial_t^\altil (\uact_i+\hat{u}_i)\\
&
\qquad\quad
+\pact\,k^\altil*\left(\hat{\bar{f}}_i\, \uact_{i,t} + (\hat{\bar{f}}_i+\bar{\bar{f}}_i)\, \hat{u}_{i,t}\right)
+\chi_{\altil=1}k^\alpha(t)\hat{u}_{i,t}(0)\|_{L^2(\Omega)}^2,
\end{aligned}
\end{equation}
where the $f$ terms will be further estimated by means of the growth estimate \eqref{growth}. 

\medskip

We first derive boundedness of $\hat{u}(t)$ in terms of $\hat{p}$ and $\hat{q}$ from \eqref{enest_uhat}.
Note that since the right hand side $\hat{q}(\uact_i(t)+\hat{u}_i(t))-\hat{p}f(\uact_i(t)+\hat{u}_i(t))$ 
of \eqref{ibvp_uhat_inf} cannot be expected to tend to zero, since $\uact_i(t)$ is assumed to tend to a nonzero steady state $\uinf_i$, we do not obtain decay of $\hat{u}(t)$ itself, but only of its time derivative. 

The right hand side in \eqref{enest_uhat} under the growth condition \eqref{growth} and condition \eqref{cond_r_kappa2_hi} on $\rrr$ can be estimated as follows.
We have 
\[
\begin{aligned}
&\|\pact\,\tilde{\bar{f}}_i(t)\,\hat{u}_i(t)\|_{L^2(\Omega)}
\leq \|\pact\|_{L^{\rrr}(\Omega)}\,\|\tilde{\bar{f}}_i(t)\|_{L^{2\rrr/(\rrr-2)}(\Omega)}\,\|\hat{u}_i(t)\|_{L^\infty(\Omega)}^2\\
&\leq \|\pact\|_{L^{\rrr}(\Omega)}
\Bigl(\tilde{c}_{2,i}+\|\usim_i(t)\|_{H^1(\Omega)}^{\kappa_2}+\|\hat{u}_i(t)\|_{H^1(\Omega)}^{\kappa_2}\Bigr)
\bigl(\|\usim_i(t)\|_{H^1(\Omega)}+\|\hat{u}_i(t)\|_{H^1(\Omega)}\bigr)
\|\hat{u}_i(t)\|_{H^2(\Omega)}
\end{aligned}
\]
with
\[
\begin{aligned}
&\tilde{c}_{2,i}
=\Bigl(c_2C_{H^1\to L^{2\rrr/(\rrr-2)}}^\Omega+C_2C_{\kappa_2}C_{H^1\to L^{(\kappa_2+1)2\rrr/(\rrr-2)}}^\Omega\|\uinf_i\|_{L^{(\kappa_2+1)2\rrr/(\rrr-2)}(\Omega)}^{\kappa_2}\Bigr)
 C_{H^2\to L^\infty}^\Omega\\
&\tilde{C}_{2,i}=C_2 C_{\kappa_2} C_{\kappa_2+1} 
(C_{H^1\to L^{(\kappa_2+1)2\rrr/(\rrr-2)}}^\Omega)^{\kappa_2+1} C_{H^2\to L^\infty}^\Omega,
\end{aligned}
\]
where we have estimated
\[
\begin{aligned}
&\|\tilde{\bar{f}}_i(t)\|_{L^{2\rrr/(\rrr-2)}(\Omega)}
\leq c_2(\|\usim_i(t)\|_{L^{2\rrr/(\rrr-2)}(\Omega)}+\|\hat{u}_i(t)\|_{L^{2\rrr/(\rrr-2)}(\Omega)})\\
&+C_2C_{\kappa_2}\bigl(\|\uinf_i\|_{L^{(\kappa_2+1)2\rrr/(\rrr-2)}(\Omega)}^{\kappa_2}+
(\|\usim_i(t)\|_{L^{(\kappa_2+1)2\rrr/(\rrr-2)}(\Omega)}+\|\hat{u}_i(t)\|_{L^{(\kappa_2+1)2\rrr/(\rrr-2)}(\Omega)})^{\kappa_2}\Bigr)\\
&\hspace*{3cm}(\|\usim_i(t)\|_{L^{(\kappa_2+1)2\rrr/(\rrr-2)}(\Omega)}+\|\hat{u}_i(t)\|_{L^{(\kappa_2+1)2\rrr/(\rrr-2)}(\Omega)})
\end{aligned}
\]
and used
\[
(a+b)^{\kappa_2}\leq C_{\kappa_2}(a^{\kappa_2}+b^{\kappa_2}), \quad a,\, b\,\geq0 \quad\text{ for }C_{\kappa_2}=\max\{1,2^{\kappa_2-1}\}.
\]

For the other right hand side term under the assumption \eqref{f0fprime0}
we have 
\[
|f(\xi)|\leq \frac12(c_2+C_2|\xi|^{\kappa_2})|\xi|^2
\]
and with $\uact_i=\uinf_i+\usim_i$ obtain  
\[
\begin{aligned}
&\|\hat{p}f(\uact_i(t)+\hat{u}_i(t))\|_{L^2(\Omega)}\\
&\leq \|\hat{p}\|_{L^{\rrr}(\Omega)}\Bigl(
\bar{c}_{2,i}+\bar{C}_{2,i} \bigl(\|\uinf_i\|_{H^1(\Omega)}^{\kappa_2}+\|\usim_i(t)\|_{H^1(\Omega)}^{\kappa_2}+\|\hat{u}_i(t)\|_{H^1(\Omega)}^{\kappa_2}\bigr)\Bigr)\\
&\qquad\,\bigl(\|\uinf_i\|_{H^1(\Omega)}+\|\usim_i(t)\|_{H^1(\Omega)}+\|\hat{u}_i(t)\|_{H^1(\Omega)}\bigr)
\,(\|\uinf_i\|_{H^2(\Omega)}+\|\usim_i(t)\|_{H^2(\Omega)}+\|\hat{u}_i(t)\|_{H^2(\Omega)})
\end{aligned}
\]
with
\[
\begin{aligned}
&\bar{c}_{2,i}=
\frac12 c_2C_{H^1\to L^{2\rrr/(\rrr-2)}}^\Omega
C_{H^2\to L^\infty}^\Omega\\
&\bar{C}_{2,i}=
\frac12 C_2 C_{\kappa_2+1} (C_{H^1\to L^{(\kappa_2+1)2\rrr/(\rrr-2)}}^\Omega)^{\kappa_2+1} 
C_{H^2\to L^\infty}^\Omega,
\end{aligned}
\]

Here according to 
\eqref{bound_usim} in Theorem~\ref{thm:decay_usim} 
with $\|\usim_i(0)\|_{H^1(\Omega)}<\rho$ (or according to 
Theorem~\ref{thm:decay_usim_nosmallness} 
without smallness assumption on the initial data) we have boundedness of $\|\usim_i(t)\|_{H^2(\Omega)}^2$ and thus also of $\|\uact_i(t)\|_{H^2(\Omega)}$ (but not necessarily decay to zero).
Moving all terms containing $\|\hat{u}_i(t)\|_{H^2(\Omega)}$ to the left hand side in \eqref{enest_uhat}, we arrive at
\begin{equation}\label{enest_uhat_1}
\begin{aligned}
&
c_{i,*}^2\partial_t^\alpha\|\hat{u}_i(t)\|_{H^1(\Omega)}^2
+\Bigl(\frac{1}{2(C_{\text{ell}}^\Omega)^2}-C_{i,*}^2(D_1(t)+D_2(t)+D_3(t))^2\Bigr)\, \|\hat{u}_i(t)\|_{H^2(\Omega)}^2
\\
&\leq 
C_{i,*}^2\,D_2(t)^2\Bigl(\|\uinf_i\|_{H^2(\Omega)}+\|\usim_i(t)\|_{H^2(\Omega)}\Bigr)^2
\end{aligned}
\end{equation}
with
\begin{equation}\label{D1D2}
\begin{aligned}
&D_1(t)=
\|\pact\|_{L^{\rrr}(\Omega)}
\Bigl(\tilde{c}_{2,i}+\|\usim_i(t)\|_{H^1(\Omega)}^{\kappa_2}+\|\hat{u}_i(t)\|_{H^1(\Omega)}^{\kappa_2}\Bigr)
\bigl(\|\usim_i(t)\|_{H^1(\Omega)}+\|\hat{u}_i(t)\|_{H^1(\Omega)}\bigr)
\\
&D_2(t)=
\|\hat{p}\|_{L^{\rrr}(\Omega)}
\Bigl(\bar{c}_{2,i}+\bar{C}_{2,i} \bigl(\|\uinf_i\|_{H^1(\Omega)}^{\kappa_2}+\|\usim_i(t)\|_{H^1(\Omega)}^{\kappa_2}+\|\hat{u}_i(t)\|_{H^1(\Omega)}^{\kappa_2}\bigr)\Bigr)\\
&\phantom{D_2(t)=C_{i,*}\|\hat{p}\|_{L^{\rrr}(\Omega)}}\
\bigl(\|\uinf_i\|_{H^1(\Omega)}+\|\usim_i(t)\|_{H^1(\Omega)}+\|\hat{u}_i(t)\|_{H^1(\Omega)}\bigr)
\\
&D_3(t)=\|\hat{q}\|_{L^2(\Omega)}\,C_{H^2\to L^\infty}^\Omega\\
\end{aligned}
\end{equation}
To guarantee positivity (and boundedness away from zero) of the factor multiplying $\|\hat{u}_i(t)\|_{H^2(\Omega)}$ in \eqref{enest_uhat_1}, we assume 
\[
\|\hat{p}\|_{L^{\rrr}(\Omega)}^2+\|\hat{q}\|_{L^2(\Omega)}\leq\rho
\]
with $\rho>0$ chosen small enough, use the fact that $\hat{u}_i(0)=0$ 
and $\|\usim_i(0)\|_{H^1(\Omega)}\leq\rho_0$ small to achieve
\[
C_{i,*}(D_1(0)+D_2(0)+D_3(0))
\leq \frac{1}{2C_{\text{ell}}^\Omega}
\] 
and apply a barrier argument (like the one in the proof of \cite[Theorem 2.1]{frac_pq_decay}) to conclude that $D_1(t)+D_2(t)+D_3(t)\leq \frac{1}{2C_{\text{ell}}^\Omega}$ for all $t>0$.

We can thus use \cite[Lemma 3.2]{frac_pq_decay} to conclude
\begin{equation}\label{bound_uhat}
\|\hat{u}_i\|_{H^1(\Omega)}^2(t)+\|\hat{u}_i(t)\|_{H^2(\Omega)}^2
\leq
C \bigl(\|\hat{p}\|_{L^\rrr(\Omega)}^2+\|\hat{q}\|_{L^2(\Omega)}^2\bigr).
\end{equation} 
for some $C>0$ depending only on $\Omega$, $c_2$, $C_2$, $\kappa_2$ and $\rrr$.

\medskip

To obtain a decay estimate on $\partial_t^\altil \hat{u}_i(t)$ from 
\eqref{enest_uhat_t}, we need commutator estimates of the terms
\[
(k^\altil*(h\,v_t)-h\,k^\altil*v_t)(t) \quad \text{ for }\ 
h\in \{\hat{\bar{f}}_i(x),\, \bar{\bar{f}}_i(x), f'(\uact_i+\hat{u}_i)(x)\}, \quad
v\in \{\uact_i(x),\,\hat{u}_i(x)\},
\]
which in case $\altil<1$ are not available in a local form that would allow us to prove decay with time.
We thus focus on the case $\altil=1$, in which the right hand side of \eqref{enest_uhat_t}
simplifies and under conditions \eqref{f0fprime0}, \eqref{growth}, \eqref{cond_r_kappa2_hi} can be estimated by
\[
\begin{aligned}
&\|(\hat{p}\,f'(\uact_i+\hat{u}_i)-\hat{q}) (\uact_{i,t}+\hat{u}_{i,t})
+\pact\,\left(\hat{\bar{f}}_i\, \uact_{i,t} + (\hat{\bar{f}}_i+\bar{\bar{f}}_i)\, \hat{u}_{i,t}\right)\|_{L^2(\Omega)}^2\\
&\leq (D_1(t)+D_2(t)+D_3(t))\|\hat{u}_{i,t}(t)\|_{H^2(\Omega)}
+(D_2(t)+D_3(t)) \|\usim_{i,t}(t)\|_{H^2(\Omega)}
\end{aligned}
\]
with 
$D_1$, $D_2$, $D_3$ as in \eqref{D1D2}. 
Here we already have $C_{i,*}(D_1(t)+D_2(t)+D_3(t))\leq \frac{1}{2C_{\text{ell}}^\Omega}$ from above and $\|\usim_{i,t}(t)\|_{H^2(\Omega)}\leq C\decayfunc(t)$ due to Theorem~\ref{prop:decay_usim_t}.
Thus, from \eqref{enest_uhat_t}, assuming $\hat{u}_{i,t}(0)\in L^2(\Omega)$ to hold, we conclude an estimate of the form 
\begin{equation}\label{enest_uhat_t_1}
\begin{aligned}
&\partial_t^\alpha\|\partial_t^\altil\hat{u}_i(t)\|_{H^1(\Omega)}^2
+\|\partial_t^\altil\hat{u}_i(t)\|_{H^2(\Omega)}^2
\\
&\leq \tilde{C}\decayfunc(t)
(\|\hat{p}\|_{L^\rrr(\Omega)}^2+\|\hat{q}\|_{L^2(\Omega)}^2 \quad t>0
\end{aligned}
\end{equation}
to hold. 
Thus, using \cite[Lemma 3.2]{frac_pq_decay} and assuming $\hat{u}_{i,t}(0)\in L^2(\Omega)$, $\hat{u}_{i,t}(t_0)\in H^1(\Omega)$ for some $t_0\geq0$ we conclude
\begin{equation}\label{bound_uhat_t}
\|\hat{u}_i\|_{H^1(\Omega)}^2(t)+\|\hat{u}_i(t)\|_{H^2(\Omega)}^2
\leq 
C \decayfunc(t) \bigl(\|\hat{p}\|_{L^\rrr(\Omega)}^2+\|\hat{q}\|_{L^2(\Omega)}^2\bigr)
\quad t>t_0
\end{equation} 
for some $C>0$ depending only on $\Omega$, $c_2$, $C_2$, $\kappa_2$, $\rrr$, $\|\hat{u}_{i,t}(0)\|_{L^2(\Omega)}$, $\|\hat{u}_{i,t}(t_0)\|_{H^1(\Omega)}$.

\begin{theorem}\label{thm:decay_uhat}
Under the conditions of Theorem~\ref{prop:decay_usim_t} (d)
with the replacements \eqref{replacements}, 
as well as \eqref{utinit} for $u=u_i(p,q)$, $u=\uact_i$, 
there exists $\rho>0$ such that for all $(p,q)\in\mathcal{B}_\rho(\pact,\qact)^{L^{\rrr}\times L^{\sss}}$
the time derivative of $\hat{u}_i=u_i(p,q)-u_i(\pact,\qact)$ satisfies
\begin{equation}\label{expdecay_uhat_t}
\|\hat{u}_i(t)\|_{H^2(\Omega)}^2+\|\partial_t \hat{u}_i(t)\|_{H^1(\Omega)}^2\leq C 
\decayfunc(t)
\quad\text{ for all }t\in(0,T]
\end{equation} 
for some $C>0$.
\end{theorem}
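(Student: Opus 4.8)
The plan is to upgrade the uniform-in-time bound on $\hat u_i$ to a genuine decay estimate on its first time derivative by running the energy argument of \cite[Theorems~2.1,2.2]{frac_pq_decay} on the differentiated problem, and then to invoke the fractional decay lemma. First I would record the boundedness \eqref{bound_uhat}, which, once $\rho$ is taken small enough for the barrier argument to close, controls $\|\hat u_i(t)\|_{H^2(\Omega)}$, and hence $\|\hat u_i(t)\|_{H^1(\Omega)}$, uniformly in $t$ by a multiple of $\|\hat p\|_{L^\rrr(\Omega)}^2+\|\hat q\|_{L^2(\Omega)}^2$. This uniform bound is exactly what is needed to keep the quantities $\|\hat u_i(t)\|_{H^1(\Omega)}$ occurring inside $D_1,D_2,D_3$ of \eqref{D1D2} under control in the subsequent step.

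Next I would test the time-differentiated equation \eqref{ibvp_uhat_t_inf}, taken with $\altil=1$, by $\mu\,\partial_t\hat u_i-\mathbb{L}\partial_t\hat u_i$, using the coercivity \eqref{coercivityDC}, ellipticity \eqref{ellipticityL} and elliptic regularity \eqref{ellipticregularityL}, to arrive at \eqref{enest_uhat_t}. The decisive structural simplification is the choice $\altil=1$: the convolution term $\pact\,k^\altil*(\hat{\bar{f}}_i\,\uact_{i,t}+(\hat{\bar{f}}_i+\bar{\bar{f}}_i)\,\hat u_{i,t})$ then collapses to a local product, so none of the commutators $k^\altil*(h\,v_t)-h\,k^\altil*v_t$, which lack a usable local form for $\altil<1$, are needed.

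I would then split the right-hand side of \eqref{enest_uhat_t} into the part proportional to $\hat u_{i,t}$, whose prefactor is $C_{i,*}(D_1+D_2+D_3)$ and can be absorbed into the $\|\partial_t\hat u_i(t)\|_{H^2(\Omega)}^2$ term on the left once the barrier bound $C_{i,*}(D_1+D_2+D_3)\le\tfrac{1}{2C_{\text{ell}}^\Omega}$ holds, and the part proportional to $\usim_{i,t}$, whose prefactor is of order $\|\hat p\|_{L^\rrr(\Omega)}+\|\hat q\|_{L^2(\Omega)}$ and which multiplies $\|\usim_{i,t}(t)\|_{H^2(\Omega)}\le C\decayfunc(t)$ by Theorem~\ref{prop:decay_usim_t}(c) under the replacements \eqref{replacements}. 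This yields the differential inequality \eqref{enest_uhat_t_1}. Applying the fractional decay lemma \cite[Lemma 3.2]{frac_pq_decay}, whose admissible initial data are supplied by \eqref{utinit} imposed for both $u_i(p,q)$ and $\uact_i$ so that $\hat u_{i,t}(0)\in L^2(\Omega)$ and $\hat u_{i,t}(t_0)\in H^1(\Omega)$, then converts \eqref{enest_uhat_t_1} into the pointwise decay \eqref{expdecay_uhat_t}. A single small $\rho$ chosen at the outset secures simultaneously the barrier smallness of Step~1 and the absorption above.

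I expect the main obstacle to be the right-hand-side estimation, namely controlling the terms $\tilde{\bar{f}}_i,\hat{\bar{f}}_i,\bar{\bar{f}}_i$ under only the growth bound \eqref{growth} and the low integrability \eqref{LrrrLsss} of the coefficients. Each of these involves $f''$ evaluated along segments joining $\uinf_i$, $\uact_i$ and $u_i$, so the H\"older split $\|\hat p\|_{L^\rrr(\Omega)}\,\|\tilde{\bar{f}}_i\|_{L^{2\rrr/(\rrr-2)}(\Omega)}\,\|\hat u_i\|_{L^\infty(\Omega)}$ and its analogues force the Sobolev embeddings $H^1\hookrightarrow L^{(\kappa_2+1)2\rrr/(\rrr-2)}$ and $H^2\hookrightarrow L^\infty$ to be valid; this is precisely where the dimension-dependent restrictions on $\rrr$ and $\kappa_2$ in \eqref{cond_r_kappa2_hi} are indispensable, and keeping every such factor bounded, using the uniform bound from Step~1 for the $\hat u_i$ contributions, is the most delicate part of the argument.
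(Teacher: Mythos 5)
Your proposal is correct and follows essentially the same route as the paper: first the uniform bound \eqref{bound_uhat} via the barrier argument on \eqref{enest_uhat}, then the energy estimate \eqref{enest_uhat_t} for the time-differentiated problem with $\altil=1$ (precisely to avoid the unavailable commutator estimates), absorption of the $\hat u_{i,t}$ terms using $C_{i,*}(D_1+D_2+D_3)\le\tfrac{1}{2C_{\text{ell}}^\Omega}$, the decay of $\usim_{i,t}$ from Theorem~\ref{prop:decay_usim_t}, and finally \cite[Lemma 3.2]{frac_pq_decay} with the initial data supplied by \eqref{utinit}. You also correctly locate the technical crux in the H\"older/Sobolev estimation of the $f''$-terms under \eqref{growth} and \eqref{cond_r_kappa2_hi}, which is exactly where the paper spends its effort.
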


\bigskip

The bounds in Theorem~\ref{thm:decay_uhat} still depend on $p$ and $q$ through the norms of 
$\|u_{i,t}(0)\|_{L^2(\Omega)}$, $\|u_{i,t}(t_0)\|_{H^1(\Omega)}$ in \eqref{utinit} for $u=u_i(p,q)$, $u=\uact_i$.
We get rid of this dependence by only imposing this assumption on the actual states $\uact_i$ 
\begin{equation}\label{utinit_act}
\uact_{i,t}(0)\in L^2(\Omega), \quad \uact_{i,t}(t_0)\in H^1(\Omega) \ \text{ for some }t_0\geq0
\end{equation}
and rather than determining $u_i(p,q)$ as a PDE solution $\text{PDE}_i[p,q,u(p,q)]=0$
with  
\[
\text{PDE}_i[p,q,u]:=\partial_t^\alpha u-\mathbb{L}u+pf(u)-qu+r_i
\]
define it by the constrained minimisation problem
\begin{equation}\label{uipq_min}
\begin{aligned}
u_i(p,q)\in\ &\text{argmin}_{u\in H^\alpha(0,T;L^2(\Omega))\cap L^2(0,T;H^2(\Omega))} 
J_{\text{PDE}_i}(u)\\
&\text{ such that }u(0)=u_0, \quad \|u_t(0)\|_{L^2(\Omega)}^2+\|u_t(t_0)\|_{H^1(\Omega)}^2
\leq \bar{\varrho}_i 
\end{aligned}
\end{equation}
where $\bar{\varrho}_i\geq \|\uact_{i,t}(0)\|_{L^2(\Omega)}^2+\|\uact_{i,t}(t_0)\|_{H^1(\Omega)}^2$ 
\[
\begin{aligned}
J_{\text{PDE}_i}(u):= 
\int_0^T\decayfunc(t)^{-1}\Bigl(\|\text{PDE}_i[p,q,u](t)\|_{L^2(\Omega)}^2+\|\partial_t\text{PDE}_i[p,q,u](t)\|_{L^2(\Omega)}^2\Bigr)\, dt.
\end{aligned}
\]
This, due to minimality of $u_i(p,q)$ and feasibility of $\uact_i$ provides us with the estimate 
\[
\begin{aligned}
&J_{\text{PDE}_i}(u_i(p,q))\leq J_{\text{PDE}_i}(\uact_i)\\
&=\int_0^T\decayfunc(t)^{-1}\Bigl(\|\text{PDE}_i[p,q,\uact_i](t)-\text{PDE}[\pact,\qact,\uact_i](t)\|_{L^2(\Omega)}^2\\
&\phantom{\int_0^T\decayfunc(t)^{-1}\Bigl(}+\|\partial_t\bigl(\text{PDE}_i[p,q,\uact_i]-\text{PDE}_i[\pact,\qact,\uact_i]\bigr)(t)\|_{L^2(\Omega)}^2\Bigr)\, dt
\end{aligned}
\]
where we have used the fact that $\text{PDE}_i[\pact,\qact,\uact_i](t)=0$.
In here
\[
\text{PDE}_i[p,q,\uact_i]-\text{PDE}_i[\pact,\qact,\uact_i]
=\hat{p}f(\uact_i)-\hat{q}\uact_i
\]
and therefore altogether we obtain
\[
\begin{aligned}
&\int_0^T\decayfunc(t)^{-1}\Bigl(\|\text{PDE}_i[p,q,u_i(p,q)](t)\|_{L^2(\Omega)}^2+\|\partial_t\text{PDE}[p,q,u_i(p,q)](t)\|_{L^2(\Omega)}^2\Bigr)\, dt\\
&\leq 
2\|\hat{p}\|_{L^\rrr(\Omega)}^2
\int_0^T\decayfunc(t)^{-1}\Bigl(\|f(\uact_i)(t)\|_{L^{2\rrr/(\rrr-2)}(\Omega)}^2
+\|f'(\uact_i)(t)\uact_{i,t}(t)\|_{L^{2\rrr/(\rrr-2)}(\Omega)}^2\Bigr)\, dt\\
&\qquad+2\|\hat{q}\|_{L^2(\Omega)}^2
\int_0^T\decayfunc(t)^{-1}\Bigl(\|\uact_i(t)\|_{L^\infty(\Omega)}^2
+\|\uact_{i,t}(t)\|_{L^\infty(\Omega)}^2\Bigr)\, dt
\end{aligned}
\]
As a consequence, only the right hand side of \eqref{enest_uhat_1} changes by addition of a term of the form $\tilde{C}\decayfunc(t)(\|\hat{p}\|_{L^\rrr(\Omega)}^2+\|\hat{q}\|_{L^2(\Omega)}^2)$
and therefore the estimate \eqref{bound_uhat} on $\hat{u}$ remains valid. 

Likewise, \eqref{enest_uhat_t_1} is preserved with a possibly modified (but completely $(p,q)$ independent) constant and therefore \eqref{bound_uhat_t} holds, with a constant $C$ whose dependence on $\|\hat{u}_{i,t}(0)\|_{L^2(\Omega)}$, $\|\hat{u}_{i,t}(t_0)\|_{H^1(\Omega)}$ can be 
replaced by dependence on the uniform 
bound $\bar{\varrho}_i$.

Note that the modification \eqref{uipq_min} only affects computation of $u_i$ in line 3 of Algorithm~\ref{algo1}. The pointwise in space update of the coefficient $(p,q)$ in lines 4--7 of Algorithm~\ref{algo1} remains.

\bigskip

Now we are in the position to prove contractivity of the fixed point operator $\mathcal{T}$ defined by \eqref{defcalT}.

In order to guarantee that iterates remain in the parameter set $\mathcal{M}$ by contractivity, we define it to be a ball with the actual parameters as centres 
\begin{equation}\label{parspace}
(p,q)\in\mathcal{M}:=\mathcal{B}_\rho(\pact,\qact)^{L^{\rrr}\times L^{\sss}}\subseteq L^{\rrr}(\Omega)\times L^{\sss}(\Omega) 
\end{equation}
with $\rho>0$ small enough (but independent of $T$).
Contractivity of the fixed point operator $\mathcal{T}$ will thus imply that it is a self-mapping on $\mathcal{M}$.
Another consequence of contractivity will be local uniqueness of $(p,q)$, that is, the fact that $(\pact,\qact)$ is the only solution to the inverse problem in $\mathcal{M}$.

It is important to note that all constants above are independent of $T$. 

We have thus proven the following contractivity result.
\begin{theorem}\label{thm:contractivity}
For $\alpha\in(2/3,1]$, under the conditions of Theorem~\ref{prop:decay_usim_t} (d) with the replacements \eqref{replacements}, as well as \eqref{detgbdd}, \eqref{utinit_act},
there exist constants $C$, $\omega$, $\rho>0$ independent of $T$
such that  
\begin{equation}\label{normdiffT}
\begin{aligned}
&\|\mathcal{T}(p,q)-\mathcal{T}(\pact,\qact)\|_{L^{\rrr}(\Omega)\times L^{\sss}(\Omega)}
\leq C \,
\decayfunc(T-t_0)
\|(p,q)-(\pact,\qact)\|_{L^{\rrr}(\Omega)\times L^{\sss}(\Omega)}\\
&\text{ for all }(p,q)\in\mathcal{B}_\rho(\pact,\qact)^{L^{\rrr}\times L^{\sss}}.
\end{aligned}
\end{equation}
\end{theorem}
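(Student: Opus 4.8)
The plan is to take the $L^\rrr(\Omega)\times L^\sss(\Omega)$ norm of the difference representation \eqref{diffT} and to exhibit the decaying, linear-in-$(\hat p,\hat q)$ behaviour of the only $T$-dependent ingredient, $\partial_t^\alpha\hat u_i(T)$, where $\hat p=p-\pact$, $\hat q=q-\qact$. First I would dispose of the prefactors. By \eqref{detgbdd} together with the convergence $\uact_i(T)\to\uinf_i$ in $H^2(\Omega)\hookrightarrow L^\infty(\Omega)$ from Theorem~\ref{thm:decay_usim}, the denominator obeys $|\dete^g(x)|\geq c_d/2$ for $T$ large, so $1/\dete^g$ is bounded in $L^\infty(\Omega)$; likewise $g_i=\uact_i(T)$ and $f(g_i)$ are bounded in $L^\infty(\Omega)$, uniformly for large $T$. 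Because $\Omega$ is bounded and $d\leq 3$, the chain $H^2(\Omega)\hookrightarrow L^\infty(\Omega)\hookrightarrow L^\rrr(\Omega),\,L^\sss(\Omega)$ then collapses both rows of \eqref{diffT} to
\[
\|\mathcal{T}(p,q)-\mathcal{T}(\pact,\qact)\|_{L^\rrr(\Omega)\times L^\sss(\Omega)}
\leq C\sum_{i=1}^2\|\partial_t^\alpha\hat u_i(T)\|_{H^2(\Omega)},
\]
with $C$ independent of $T$.

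The heart of the argument is the bound on $\|\partial_t^\alpha\hat u_i(T)\|_{H^2(\Omega)}$. Since $\hat u_i(0)=0$, the Djirbashian-Caputo derivative is a convolution, $\partial_t^\alpha\hat u_i(T)=(k^\alpha*\partial_t\hat u_i)(T)=\int_0^T k^\alpha(T-s)\,\partial_t\hat u_i(s)\,ds$. The $H^2$-version of \eqref{bound_uhat_t} (the $\|\partial_t^\altil\hat u_i\|_{H^2}$ term on the left of \eqref{enest_uhat_t_1}, with $\altil=1$), furnished by Theorem~\ref{thm:decay_uhat}, gives $\|\partial_t\hat u_i(s)\|_{H^2(\Omega)}\leq C\,\decayfunc(s)^{1/2}\,\|(\hat p,\hat q)\|_{L^\rrr\times L^\sss}$ for $s>t_0$. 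Taking $H^2$ norms under the integral sign and inserting this bound yields, in the subdiffusive case $\decayfunc(s)=s^{-\alpha}$,
\[
\|\partial_t^\alpha\hat u_i(T)\|_{H^2(\Omega)}
\leq C\,\|(\hat p,\hat q)\|_{L^\rrr\times L^\sss}\int_0^T (T-s)^{-\alpha}\,s^{-\alpha/2}\,ds
= C'\,T^{1-\frac{3}{2}\alpha}\,\|(\hat p,\hat q)\|_{L^\rrr\times L^\sss},
\]
the Beta-type integral converging because $\alpha<1$ and $\alpha/2<1$, while the exponent $1-\tfrac{3}{2}\alpha$ is negative precisely for $\alpha>\tfrac23$, which is the origin of that hypothesis. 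For $\alpha=1$ the analogous convolution against the exponentially small $\decayfunc$ produces a factor $\decayfunc(T-t_0)=e^{-\omega(T-t_0)}$. In both regimes the prefactor tends to $0$ as $T\to\infty$, which is what \eqref{normdiffT} abbreviates by $\decayfunc(T-t_0)$.

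Combining the two displays proves \eqref{normdiffT} with contraction factor $C\,\decayfunc(T-t_0)\to0$ as $T\to\infty$; choosing $T$ (respectively $T_2$) large enough renders it strictly below $1$. As every constant in the decay estimates of \cite{frac_pq_decay} and of Theorem~\ref{thm:decay_uhat} is independent of $T$, so are $C$, $\omega$ and $\rho$. Finally, contractivity on the ball $\mathcal{M}$ of \eqref{parspace} together with the obvious identity $\mathcal{T}(\pact,\qact)=(\pact,\qact)$ makes $\mathcal{T}$ a self-map of $\mathcal{M}$ and singles out $(\pact,\qact)$ as its unique fixed point there, giving the asserted local uniqueness as well as convergence of Algorithm~\ref{algo1}.

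The step I expect to be the real obstacle is the passage from the first-order decay delivered by Theorem~\ref{thm:decay_uhat} to the fractional quantity $\partial_t^\alpha\hat u_i(T)$ that actually appears in \eqref{diffT}: the kernel $k^\alpha(T-s)$ and the decay factor $s^{-\alpha/2}$ are simultaneously singular at $s=T$ and at $s=0$, so the time integral must be split at $t_0$ (only boundedness, not decay, being available on $(0,t_0)$) and the two endpoint singularities balanced against one another. It is exactly this balancing that forces $\alpha>\tfrac23$ and produces the effective rate recorded as $\decayfunc(T-t_0)$.
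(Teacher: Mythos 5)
Your proposal is correct and follows essentially the same route as the paper: bound $1/\dete^g$ away from zero via \eqref{detgbdd} and the convergence $\uact_i(T)\to\uinf_i$, bound $g_i$ and $f(g_i)$ in $L^\infty(\Omega)$, and reduce everything to the decay of $\partial_t^\alpha\hat u_i(T)$, which you recover by convolving $k^\alpha$ against the decay of $\hat u_{i,t}$ from Theorem~\ref{thm:decay_uhat} --- precisely the mechanism the paper outsources to the quoted estimate $\|\partial_t^\alpha u(t)\|_{H^2(\Omega)}\leq \tilde C t^{1-3\alpha/2}$ from \cite[Remark 2.1]{frac_pq_decay}, and the correct explanation of the hypothesis $\alpha>2/3$. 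The only (harmless) mismatch is that for $\alpha<1$ this yields the rate $T^{1-3\alpha/2}$ rather than the $\decayfunc(T-t_0)=(T-t_0)^{-\alpha}$ literally written in \eqref{normdiffT}; the paper's own argument has the same feature, and both factors tend to zero, which is all that contractivity requires.
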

\begin{proof}
In \eqref{diffT} we have $g_i=\uact_i(T)$ and can thus estimate 
\[
\begin{aligned}
&\|\mathcal{T}(p,q)-\mathcal{T}(\pact,\qact)\|_{L^{\rrr}(\Omega)\times L^{\sss}(\Omega)}\\
&\leq \frac{1}{\dete^{\uact(T)}}
\max_{i\in\{1,2\}} f_{\max}\bigl(\|\uact_i(T)\|_{L^\infty(\Omega)}\bigr) 
\max_{i\in\{1,2\}} \bigl(\|\hat{u}_{i,t}(T)\|_{L^{\rrr}(\Omega)}+\|\hat{u}_{i,t}(T)\|_{L^{\sss}(\Omega)}\bigr)\\
&\leq \frac{(C_{H^1,L^{\rrr}}^\Omega+C_{H^1,L^{\sss}}^\Omega)}{\dete^{\uinf}+|\dete^{\uact(T)}-\dete^{\uinf}|}  
\max_{i\in\{1,2\}} f_{\max}\bigl(\|\uact_i(T)\|_{L^\infty(\Omega)}\bigr) 
\max_{i\in\{1,2\}} \|\hat{u}_{i,t}(T)\|_{H^1(\Omega)},
\end{aligned}
\]
where we have used the monotonically increasing function 
\[
f_{\max}(\zeta):=\zeta+\sup_{|\xi|\leq \zeta} |f(\xi)|
\quad \zeta\in\mathbb{R}^+,
\]
so that 
$\|v\|_{L^\infty(\Omega)}+\|f(v)\|_{L^\infty(\Omega)}\leq f_{\max}(\|v\|_{L^\infty(\Omega)})$.
Thus, using \eqref{expdecay_uhat_t} as well as \eqref{detgbdd} and 
\[
|\dete^{\uact(T,x)}-\dete^{\uinf(x)}|\leq 
\sum_{i\in\{1,2\}}(1+|\int_0^1 f'(\uinf_i(x)+\theta(\uact_i(T,x)-\uinf_i(x)))|) |\uact_i(T,x)-\uinf_i(x)|
\]
with $\|\uact_i(T)-\uinf_i\|_{L^\infty(\Omega)}\leq C_{H^2\to L^\infty}^\Omega \|\usim_i\|_{H^2(\Omega)}$,
we arrive at the assertion.
\end{proof}

\subsubsection*{Alternative observation setting}
To prove a contractivity estimate like \eqref{diffT} for $\widetilde{\mathcal{T}}$ (cf. \eqref{defcalT_til}) according to  
\begin{equation}\label{diffT_til}
\widetilde{\mathcal{T}}(p,q)-\widetilde{\mathcal{T}}(\pact,\qact)
=\frac{1}{\dete^g}
\left(\begin{array}{c} 
g_2\,\partial_t^\alpha \hat{u}(T_1)
-g_1\,\partial_t^\alpha \hat{u}(T_2)\\
f(g_2)\,\partial_t^\alpha \hat{u}(T_1)
-f(g_1)\,\partial_t^\alpha \hat{u}(T_2)
\end{array}\right)
\end{equation}
where $\hat{u}=u(p,q)-u(\pact,\qact)$,
we consider the limits $T_1\to0$, $T_2\to\infty$. 
For the latter, the results above (Theorem~\ref{thm:decay_uhat}) can be applied and for the former, continuity of $\partial_t^\alpha \hat{u}$ at $t=0$ is relevant.
Due to the known singularity at initial time of solutions to subdiffusion equations, this can only be expected to hold in case $\alpha=1$.
\begin{theorem}\label{thm:contractivity_til}
For $\alpha=1$, $t_0=0$, under the conditions of Theorem~\ref{thm:contractivity} (with the subscript ${}_i$ removed) and \eqref{detgbdd_til}, 
with additionally $r$, $r_t$ continuous at $t=0$ with values in $L^2(\Omega)$,
there exist constants $C$, $\omega$, $\rho>0$ independent of $0<T_1<T_2$
such that  
\begin{equation}\label{normdiffT_til}
\begin{aligned}
&\|\widetilde{\mathcal{T}}(p,q)-\widetilde{\mathcal{T}}(\pact,\qact)\|_{L^{\rrr}(\Omega)\times L^{\sss}(\Omega)}\\
&\leq C \,
\max\{e^{-c_0T_2},\,
\phi(T_1)\} \|(p,q)-(\pact,\qact)\|_{L^{\rrr}(\Omega)\times L^{\sss}(\Omega)}\\
&\text{ for all }(p,q)\in\mathcal{B}_\rho(\pact,\qact)^{L^{\rrr}\times L^{\sss}}.
\end{aligned}
\end{equation}
with a function $\phi$ that 
satisfies $\phi(t)\searrow0$ as $t\searrow0$.  
\end{theorem}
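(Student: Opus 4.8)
The plan is to follow the proof of Theorem~\ref{thm:contractivity}, but now with the two measurement times entering \eqref{diffT_til} asymmetrically. Writing $g_1=\uact(T_1)$, $g_2=\uact(T_2)$ and $\hat u=u(p,q)-u(\pact,\qact)$, the first step is to bound the coefficient factors in \eqref{diffT_til} uniformly in $0<T_1<T_2$. As $T_2\to\infty$ the decay \eqref{decay_usim} gives $g_2=\uact(T_2)\to\uinf$, and as $T_1\to0$ continuity of $\uact$ at $t=0$ gives $g_1=\uact(T_1)\to u_0$, both in $H^2(\Omega)\hookrightarrow L^\infty(\Omega)$. Hence $\|g_i\|_{L^\infty(\Omega)}$ and $\|f(g_i)\|_{L^\infty(\Omega)}$ stay bounded (using the monotone $f_{\max}$ from the proof of Theorem~\ref{thm:contractivity}), and by \eqref{detgbdd_til} together with $|\dete^g-(\uinf f(u_0)-u_0 f(\uinf))|\to0$ the prefactor $1/\dete^g$ stays bounded. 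As in Theorem~\ref{thm:contractivity}, the estimate then reduces to controlling $\|\partial_t^\alpha\hat u(T_1)\|$ and $\|\partial_t^\alpha\hat u(T_2)\|$ in $L^{\rrr}(\Omega)+L^{\sss}(\Omega)$, which I would pass through $H^1(\Omega)\hookrightarrow L^{\rrr},L^{\sss}$.

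For the large-time increment I set $\alpha=1$, $t_0=0$ and reuse the work already done: Theorem~\ref{thm:decay_uhat} with the replacements \eqref{replacements}, combined with the explicit dependence on $(\hat p,\hat q)$ in \eqref{bound_uhat_t}, gives $\|\partial_t\hat u(T_2)\|_{H^1(\Omega)}^2\le C\,\decayfunc(T_2)\bigl(\|\hat p\|_{L^{\rrr}(\Omega)}^2+\|\hat q\|_{L^{\sss}(\Omega)}^2\bigr)$, i.e. the factor $e^{-c_0T_2}$ since $\decayfunc(t)=e^{-\omega t}$ for $\alpha=1$. The hypotheses that $r,r_t$ are continuous at $t=0$ with values in $L^2(\Omega)$ provide the initial regularity \eqref{utinit} with $t_0=0$ required by Theorem~\ref{thm:decay_uhat}, and let me bootstrap $\partial_t\hat u(0)=\hat q\,u_0-\hat p\,f(u_0)\in L^2(\Omega)$ directly from the PDE.

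The small-time increment $\partial_t\hat u(T_1)$ is the crux, and is exactly where continuity of $\partial_t^\alpha\hat u$ at $t=0$ --- available only for $\alpha=1$ --- is needed. Since $\hat u(0)=0$ and, for $\alpha=1$, $t\mapsto\partial_t\hat u(t)$ is continuous at $0$ with limit $\hat q\,u_0-\hat p\,f(u_0)$, I would let $\phi(T_1)$ be a uniform modulus of continuity, so that $\partial_t\hat u(T_1)=(\hat q\,g_1-\hat p\,f(g_1))+\mathrm{dsc}$ with $\|\mathrm{dsc}\|\le\phi(T_1)(\|\hat p\|_{L^{\rrr}(\Omega)}+\|\hat q\|_{L^{\sss}(\Omega)})$ and $\phi(T_1)\searrow0$. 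The main obstacle is that the leading term $\hat q\,g_1-\hat p\,f(g_1)$ does not vanish as $T_1\to0$: substituting it (and $\partial_t\hat u(T_2)\mapsto0$) into \eqref{diffT_til} produces, by the very definition of $\dete^g$ in \eqref{defcalT_til}, the oblique projection onto $\mathrm{span}(g_2,f(g_2))$ along $\mathrm{span}(g_1,f(g_1))$, an idempotent whose operator norm is $\ge1$. Thus a crude triangle-inequality split is not enough; the genuine contraction factor $\max\{e^{-c_0T_2},\phi(T_1)\}$ has to be recovered by showing that on the non-contractive range of this projection the accompanying small factors (the continuity defect $\mathrm{dsc}=O(\phi(T_1))$ and the large-$T_2$ decay $\partial_t\hat u(T_2)=O(e^{-c_0T_2})$) supply the missing smallness. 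Establishing this --- that the persistent projection part does not spoil contractivity as $T_1\to0$ --- is the delicate point I expect to dominate the proof; once it is in place, assembling the two increment bounds with the coefficient bounds of the first step and shrinking $\rho$ so that iterates remain in $\mathcal{B}_\rho(\pact,\qact)$ yields \eqref{normdiffT_til}, and hence contractivity, local uniqueness and convergence of $\widetilde{\mathcal T}$.
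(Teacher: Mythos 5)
The paper prints no proof of Theorem~\ref{thm:contractivity_til}; the only guidance it offers is the sentence preceding the statement (use Theorem~\ref{thm:decay_uhat} for $T_2\to\infty$, and ``continuity of $\partial_t^\alpha\hat u$ at $t=0$'' for $T_1\to0$, which forces $\alpha=1$). Your treatment of the $T_2$-half is exactly that intended route: $\|\partial_t\hat u(T_2)\|_{H^1(\Omega)}$ is controlled by \eqref{bound_uhat_t} with $\decayfunc(t)=e^{-\omega t}$, and the uniform bounds on $g_i$, $f(g_i)$ and $1/\dete^g$ via \eqref{detgbdd_til} are handled as in the proof of Theorem~\ref{thm:contractivity}. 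So far your proposal and the paper's sketch coincide.

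The problem is the $T_1$-half, and here your proposal stops exactly where the proof has to begin. You correctly observe that $\partial_t\hat u(T_1)\to\partial_t\hat u(0)=\hat q\,u_0-\hat p\,f(u_0)$ (a fact the paper itself records when bootstrapping initial conditions for \eqref{ibvp_uhat_t_inf}), that this limit does \emph{not} vanish, and that inserting it into \eqref{diffT_til} produces, pointwise in $x$, the rank-one idempotent
\[
M=\frac{1}{\dete^g}\begin{pmatrix}-g_2\\-f(g_2)\end{pmatrix}\begin{pmatrix}-f(g_1)&g_1\end{pmatrix},\qquad M^2=M,
\]
whose operator norm is $\geq1$ in any induced norm. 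Your own diagnosis therefore shows that no decomposition of the form $\partial_t\hat u(T_1)=(\hat q g_1-\hat p f(g_1))+\mathrm{dsc}$ with $\|\mathrm{dsc}\|=O(\phi(T_1))$ can yield the factor $\phi(T_1)\searrow0$ in \eqref{normdiffT_til}: the map $(\hat p,\hat q)\mapsto M(\hat p,\hat q)+E(\hat p,\hat q)$ with $\|E\|$ small satisfies $\|M+E\|\geq\|M\|-\|E\|\geq1-\|E\|$, so the persistent projection part cannot be absorbed by the small perturbations, only by vanishing identically (i.e.\ for $(\hat p,\hat q)$ in $\ker M$, the pointwise constraint $\hat q\,u_0=\hat p\,f(u_0)$). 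Your closing claim that ``once it is in place, assembling the two increment bounds\dots yields \eqref{normdiffT_til}'' therefore asserts precisely what has not been — and, by your own norm computation, cannot be — established along this route. As a proof the proposal is incomplete at its decisive step; to finish you would need either additional structure that kills the idempotent contribution (which is not visible in \eqref{defcalT_til}), or a reformulation of what $\phi(T_1)$ is meant to control, neither of which the paper's one-sentence sketch supplies.
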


\begin{remark}\label{rem:noisydata}
Since $\triangle g_i$ is used in the reconstruction scheme (as well as in the proof of the above convergence theorems), realistically given noisy measurements, with some noise level $\tilde{\delta}>0$ in $L^2(\Omega)$, first of all have to be subject to some smoothing procedure, so that the filtered version lies in $H^2(\Omega)$, with an $H^2(\Omega)$ noise level $\delta>0$ that is typically larger than the $L^2$ noise level $\tilde{\delta}$.
An inspection of the above convergence proofs shows that the error in the reconstruction will then be $O(\delta)$ after a sufficiently large number $O(|\log(\delta)|)$ of fixed point iterations. In fact, the above convergence results also show that the inverse problem is well-posed when considering the data in the space $H^2(\Omega)$, so that no early stopping is required.
\end{remark}
 
\newpage
\section{Reconstruction results}\label{subsec:reconstructions}

As a parallel to the spatially independent situation \cite{Verhulst:1845} we will take as given an initial value $u_0 = u_0(x)$ 
as well as a source $r=r(t,x)$ 
and make measurements at spatial points for given values of time.
These values will correspond to the spatial grid taken for the direct solver.
However, since the solution $u(x,t)$ is at least twice differentiable in space
a more sparse sample can be taken and extrapolated to the final grid.
Under assumptions of low data error this is quite effective.

The observation setting \eqref{obs} corresponding to the application of two different sources will be referred to as the ``two run'' case. For the alternative situation \eqref{obs2} of observing $u(x,t)$ at two times $t=T_1$ and $t=T_2$
we will also indicate some of the delicate issues that can arise in the
choice of $T_1$ and $T_2$.
In particular, we will demonstrate the differences in reconstructions
between the cases of $T_2-T_1$ being small or large, that is, $T_2$ being
{\it near\/} or {\it far\/} from $T_1$.
In short, while from a practical viewpoint more data measurements are obviously
desirable, our focus is on describing what is feasible to obtain from
the least information that is sufficient to guarantee uniqueness.

In view of the decay rate of the Lipschitz constant of $\mathcal{T}$ to zero, convergence and its speed heavily depends on the length $T$ or $T_2$ of the time interval. While the values we are choosing in our computations below look moderate, they must be seen relative to the diffusion coefficient (which is here set to unity $D\equiv I$), which is coupled to time by a simple rescaling.

As will be demonstrated in this section,
a judicious choice of the two 
sources
as opposed to two time measurements gives in general superior reconstructions of $p(x)$ and $q(x)$.
In addition, if two time values are the prescribed data then
a certain gap in time between measurements is important.
In the reconstructions to be shown we take several subcases depending
on the location/type of later time data.
Specifically, we shall separate the single run from 
a source $r$
case into when $T_2-T_1$ is either small or large.

The parameter $\alpha$ -- the order of the fractional derivative -- turns out to play an only minor role, at least for the range of times considered;
but see Table~\ref{table:alpha} for a comparison of different $\alpha$ values.
Thus all the reconstructions to be shown utilise the
same value of $\alpha$ and its value was taken to be $0.8$.
Of course, if we were in the situation of considering very large times as part
of our reconstruction information this may be a larger factor.

Will will also show the effect of the function $f(u)$ or, more precisely,
the term $f(u)/u$ that directly couples the unknowns $q(x)$ and $p(x)$.
In the reconstructions to be shown we take four choices of $f(u)$:
\begin{equation}\label{fu1234}
f_1(u) = u^2, \quad
f_2(u) = \frac{1}{4}u^3, \quad
f_3(u) = 4u^2, \quad
f_4(u) = u^3.
\end{equation}
As can be seen from the graphs to follow,
there is a remarkable difference in the quality of the  reconstructions
both in the difference between the final iteration and the actual
$p(x)$ and $q(x)$ and in the number of iterations taken to
achieve this under both the different data cases and the terms $f(u)$.

In each of these figures, the first column shows those from the two-run case, the second from the
case of a single run with  $T_1$ and $T_2$ being ``far'' ($T_1=0.05$, $T_2=0.3$)
and the third from these being ``near'' ($T_1=0.2$, $T_2=0.3$).
 
The figure below shows,
for each of the four choices \eqref{fu1234} of the function $f(u)$,
the error history ($p$ blue, $q$ red).
\vspace*{1cm}
\newbox\figureonenew
\newbox\figuretwonew
\newbox\figurethreenew
\newbox\figurefournew
\newbox\figurefivenew
\newbox\figuresixnew
\newbox\figuresevennew
\newbox\figureeightnew
\newbox\figureninenew
\newbox\figuretennew
\newbox\figureelevennew
\newbox\figuretwelvenew
\newbox\figurethirteennew
\newbox\figurefourteennew
\newbox\figurefifteennew
\newbox\figurelegendnew
\newbox\figurelegendtextnew
\newbox\figurelegendvalues
\newdimen\xfiglen
\newdimen\yfiglen
\newdimen\xfigdim \newdimen\yfigdim \newdimen\legendheight
\newdimen\xfiglenshort \newdimen\yfiglenshort
\xfigdim = 3true in
\yfiglen = 1.25true in
\xfiglen = 2true in
\yfigdim = 2true in
\legendheight = 1.0true in
\setbox\figurelegendtextnew=\vbox{
 \beginpicture
   \setcoordinatesystem units <0.25\xfigdim,0.19\legendheight> 
   \setplotarea x from 0 to 7, y from 0 to 4
 \footnotesize
 \put{Row 1: $f(u)=u^2$} [l] at  0 3.2
 \put{Row 2: $f(u)=\frac{1}{4}u^3$} [l] at 0 2
 \put{Row 3: $f(u)=4u^2$}  [l] at 2 3.2
 \put{Row 4: $f(u)=u^3$} [l] at 2 2
 \put{Column 1:\ \, two runs,  $\,T=0.3$} [l] at 4.3 3.5
 \put{Column 2:\ \, $T_1=0.05$, $T_2=0.3$} [l] at 4.3 2.5
 \put{Column 3:\ \, $T_1=0.2$, $T_2=0.3$} [l] at 4.3 1.5
 \endpicture
 }
%

\setbox\figureonenew=\vbox{\hsize=\xfiglen
\beginpicture
\scriptsize
  \setcoordinatesystem units <\xfiglen,\yfiglen>  point at 0 -1.5
  \setplotarea x from 0 to 0.6, y from 0 to 1
  \axis bottom shiftedto y=0 ticks short withvalues $0$ $2$ $4$ $6$ / quantity 4 /
  \axis left ticks short numbered from 0 to 1 by 0.2 /
\setlinear
\setlinear
\setsolid
\Blue{\relax  
\plot
        0          1
    0.1000    0.051546
    0.2000    0.061163
    0.3000    0.053878
    0.4000    0.041753
    0.5000    0.030832
    0.6000    0.022535
/ }\relax
\Red{\relax   
\plot 
        0          1
    0.1000    0.286295
    0.2000    0.238260
    0.3000    0.177653
    0.4000    0.126678
    0.5000    0.088639
    0.6000    0.061799 
/}\relax
\endpicture
}   
\setbox\figuretwonew=\vbox{\hsize=\xfiglen 
\beginpicture
\scriptsize
  \setcoordinatesystem units <\xfiglen,\yfiglen>  point at 0 -1.5
  \setplotarea x from 0 to 0.6, y from 0 to 1
  \axis bottom shiftedto y=0 ticks short withvalues $0$ $2$ $4$ $6$  / quantity 4 /
  \axis left ticks short numbered from 0 to 1 by 0.2 /
\setlinear
\setsolid
\Blue{\relax  
\plot
        0          1
    0.1000    1.091351
    0.2000    0.650000
    0.3000    0.389947
    0.4000    0.307143
    0.5000    0.274368
    0.6000    0.249633
/ \relax}\relax
\Red{\relax  
\plot
        0          1
    0.1000    1.515702
    0.2000    0.918321
    0.3000    0.537259
    0.4000    0.414928
    0.5000    0.375526
    0.6000    0.352553
/\relax}\relax
\endpicture
}
\setbox\figurethreenew=\vbox{\hsize=\xfiglen   
\beginpicture
\scriptsize
  \setcoordinatesystem units <\xfiglen,\yfiglen>  point at 0 -1.5
  \setplotarea x from 0 to 0.6, y from 0 to 1
  \axis bottom shiftedto y=0 ticks short withvalues $0$ $2$ $4$ $6$  / quantity 4 /
  \axis left ticks short numbered from 0 to 1 by 0.2 /
\setlinear
\setsolid
\Blue{\relax  
\plot
0         1
0.1000    0.595010
0.2000    0.635641
0.3000    0.648183 
0.4000    0.633163
0.5000    0.601400
0.6000    0.564639
/\relax\relax
}
\setsolid
\Red{\relax
\plot
0.0000 1.0
0.1000 0.790981
0.2000 0.839237
0.3000 0.859279
0.4000 0.845678
0.5000 0.809310
0.6000 0.766105
/\relax}\relax
\endpicture
}
%


%
\setbox\figurefournew=\vbox{\hsize=\xfiglen
\beginpicture
\scriptsize
  \setcoordinatesystem units <\xfiglen,\yfiglen>  point at 0 -1.5
  \setplotarea x from 0 to 0.6, y from 0 to 1
  \axis bottom shiftedto y=0 ticks short withvalues $0$ $2$ $4$ $6$  / quantity 4 /
  \axis left ticks short numbered from 0 to 1 by 0.2 /
\setlinear
\setlinear
\setsolid
\Blue{\relax  
\plot
    0         1.0
    0.1000    0.034988
    0.2000    0.043296
    0.3000    0.028581
    0.4000    0.017374
    0.5000    0.010688
    0.6000    0.006934
/}
\Red{\relax   
\plot
        0          1
    0.1000    0.438421
    0.2000    0.289019
    0.3000    0.170169
    0.4000    0.098319
    0.5000    0.056884
    0.6000    0.033355
/
 }\relax
\endpicture
}   
\setbox\figurefivenew=\vbox{\hsize=\xfiglen 
\beginpicture
\scriptsize
  \setcoordinatesystem units <\xfiglen,\yfiglen>  point at 0 -1.5
  \setplotarea x from 0 to 0.6, y from 0 to 1
  \axis bottom shiftedto y=0 ticks short withvalues $0$ $2$ $4$ $6$  / quantity 4 /
  \axis left ticks short numbered from 0 to 1 by 0.2 /
\setlinear
\setsolid
\Blue{\relax  
\plot
    0         1
    0.1000    0.562000
    0.2000    0.325013
    0.3000    0.207499
    0.4000    0.190064
    0.5000    0.209256
    0.6000    0.237973
/\relax}\relax
\Red{\relax  
\plot
        0          1
    0.1000    0.804946
    0.2000    0.513148 
    0.3000    0.359587
    0.4000    0.341260
    0.5000    0.371589
    0.6000    0.415089
/\relax}\relax
\endpicture
}
\setbox\figuresixnew=\vbox{\hsize=\xfiglen  
\beginpicture
\scriptsize
  \setcoordinatesystem units <\xfiglen,\yfiglen>  point at 0 -1.5
  \setplotarea x from 0 to 0.6, y from 0 to 1
  \axis bottom shiftedto y=0 ticks short withvalues $0$ $2$ $4$ $6$  / quantity 4 /
  \axis left ticks short numbered from 0 to 1 by 0.2 /
\setlinear
\setsolid
\Blue{\relax  
\plot
0         1
0.1000    0.302625
0.2000    0.304730
0.3000    0.303962
0.4000    0.301304
0.5000    0.296648
0.6000    0.291484
/\relax
}
\setsolid
\Red{\relax
\plot
0.0000 1.0
0.1000 0.446934
0.2000 0.446755
0.3000 0.442177
0.4000 0.440012
0.5000 0.437182
0.6000 0.434625
/\relax
}
\endpicture
}
%
\setbox\figuresevennew=\vbox{\hsize=\xfiglen  
\beginpicture
\scriptsize
  \setcoordinatesystem units <\xfiglen,\yfiglen>  point at 0 -1.5
  \setplotarea x from 0 to 0.6, y from 0 to 1
  \axis bottom shiftedto y=0 ticks short withvalues $0$ $2$ $4$ $6$  / quantity 4 /
  \axis left ticks short numbered from 0 to 1 by 0.2 /
\setlinear
\setsolid
\Blue{\relax 
\plot
0.0000 1.0
0.1000 0.036572
0.2000 0.015331
0.3000 0.005555
0.4000 0.002535
0.5000 0.001789
0.6000 0.001623
/\relax
}
%
\Red{\relax
\plot
0.0000 1.0
0.1000 0.133935
0.2000 0.045520 
0.3000 0.014996
0.4000 0.005598
0.5000 0.003132
0.6000 0.002626
/\relax
}
\endpicture
} 
\setbox\figureeightnew=\vbox{\hsize=\xfiglen  
\beginpicture
\scriptsize
  \setcoordinatesystem units <\xfiglen,\yfiglen>  point at 0 -1.5
  \setplotarea x from 0 to 0.6, y from 0 to 1
  \axis bottom shiftedto y=0 ticks short withvalues $0$ $2$ $4$ $6$  / quantity 4 /
  \axis left ticks short numbered from 0 to 1 by 0.2 /
\setlinear
\setsolid
\Blue{\relax 
\plot
0         1
0.1000    0.940460
0.2000    0.463716
0.3000    0.274468
0.4000    0.182588
0.5000    0.116919
0.6000    0.073009
/\relax
}
\setsolid
\Red{\relax
\plot
0         1
0.1000   1.244179
0.2000   0.626720
0.3000   0.368447
0.4000   0.243446
0.5000   0.153865
0.6000   0.094956
/\relax
}
\endpicture
}  
\setbox\figureninenew=\vbox{\hsize=\xfiglen  
\beginpicture
\scriptsize
  \setcoordinatesystem units <\xfiglen,\yfiglen>  point at 0 -1.5
  \setplotarea x from 0 to 0.6, y from 0 to 1
  \axis bottom shiftedto y=0 ticks short withvalues $0$ $2$ $4$ $6$  / quantity 4 /
  \axis left ticks short numbered from 0 to 1 by 0.2 /
\setlinear
\setsolid
\Blue{\relax 
\plot
0         1
0.1000    0.384513
0.2000    0.284676
0.3000    0.184196
0.4000    0.127657
0.5000    0.106230
0.6000    0.099974
/\relax
}
\setsolid
\Red{\relax
\plot
0.0000 1.0
0.1000 0.469081
0.2000 0.364222
0.3000 0.238103
0.4000 0.166311
0.5000 0.142017
0.6000 0.136888
/
}\relax
\endpicture
} 
%
\setbox\figuretennew=\vbox{\hsize=\xfiglen  
\beginpicture
\scriptsize
  \setcoordinatesystem units <\xfiglen,\yfiglen>  point at 0 -1.5
  \setplotarea x from 0 to 0.6, y from 0 to 1
  \axis bottom shiftedto y=0 ticks short withvalues $0$ $2$ $4$ $6$  / quantity 4 /
  \axis left ticks short numbered from 0 to 1 by 0.2 /
\setlinear
\setsolid
\Blue{\relax 
\plot
0         1
0.1000    0.021463
0.2000    0.005790
0.3000    0.001624
0.4000    0.000818
0.5000    0.000692
0.6000    0.000669
/\relax
}
\setsolid
\Red{\relax
\plot
0         1
0.1000    0.147844
0.2000    0.033595
0.3000    0.007709
0.4000    0.002251
0.5000    0.001421
0.6000    0.001337
/\relax
}
\endpicture
} 
%
%
\setbox\figureelevennew=\vbox{\hsize=\xfiglen
\beginpicture
\scriptsize
  \setcoordinatesystem units <\xfiglen,\yfiglen>  point at 0 -1.5
  \setplotarea x from 0 to 0.6, y from 0 to 1
  \axis bottom shiftedto y=0 ticks short withvalues $0$ $2$ $4$ $6$  / quantity 4 /
  \axis left ticks numbered from 0 to 1 by 0.2 /
  \put {1} [l] at 0.02 1
\setlinear
\setlinear
\setsolid
\Blue{\relax  
\plot
0         1
0.1000    0.463348
0.2000    0.186541
0.3000    0.071922
0.4000    0.036877
0.5000    0.031351
0.6000    0.033245
/}
\Red{\relax   
\plot
0         1
0.1000    0.615062
0.2000    0.266196
0.3000    0.103732
0.4000    0.053622 
0.5000    0.049550 
0.6000    0.055364
/}\relax
\endpicture
}    
\setbox\figuretwelvenew=\vbox{\hsize=\xfiglen 
\beginpicture
\scriptsize
  \setcoordinatesystem units <\xfiglen,\yfiglen>  point at 0 -1.5
  \setplotarea x from 0 to 0.6, y from 0 to 1
  \axis bottom shiftedto y=0 ticks short withvalues $0$ $2$ $4$ $6$  / quantity 4 /
  \axis left ticks numbered from 0 to 1 by 0.2 /
\setlinear
\setsolid
\Blue{\relax  
\plot
    0         1
    0.1000    0.143375
    0.2000    0.091723
    0.3000    0.077785
    0.4000    0.068910
    0.5000    0.065366
    0.6000    0.064204
/\relax}\relax
\Red{\relax  
\plot
        0          1
    0.1000    0.184910
    0.2000    0.113422
    0.3000    0.100096
    0.4000    0.093179
    0.5000    0.091318
    0.6000    0.091008
/\relax}\relax
\endpicture
}  
\relax
\relax
\smallskip
\hbox to \hsize{\hss\copy\figureonenew\hss\copy\figuretwonew\hss\copy\figurethreenew\hss}
\medskip

\hbox to \hsize{\hss\copy\figurefournew\hss\copy\figurefivenew\hss\copy\figuresixnew\hss}
\medskip

\hbox to \hsize{\hss\copy\figuresevennew\hss\copy\figureeightnew\hss\copy\figureninenew\hss}
\medskip

\hbox to \hsize{\hss\copy\figuretennew\hss\copy\figureelevennew\hss\copy\figuretwelvenew\hss}


\vfill\eject
The next set of figures shows the reconstructions of both $p$ and $q$.
In each picture, the first (red), second (green) and sixth iterates (blue) are plotted, for 
$f(u)=u^2$, (rows 1 and 2)
$f(u)=\frac14 u^3$, (rows 3 and 4)
$f(u)=4u^2$, (rows 5 and 6)
$f(u)=u^3$, (rows 7 and 8).

\setbox\figurelegendtext=\vbox{
\beginpicture
  \setcoordinatesystem units <0.5\xfigdim,1.2\legendheight> 
  \setplotarea x from 2 to 6, y from 0 to 4
\scriptsize
\put{Rows 1,\ 2 $\ \,f(u) = u^2$:} [l] at 0 2
\put{Rows 3,\ 4 $\,f(u) = \frac{1}{4}u^3$:} [l] at 2.5 2
\put{Rows 5,\ 6 $\ \,f(u) = 4u^2$:} [l] at 0 0.7
\put{Rows 7,\ 8 $\,f(u) = u^3$:} [l] at 2.5 0.7
\put{Columns (left to right) :\quad two runs, \quad $T_1$-$T_2$ far,\quad $T_1$-$T_2$ near.} [l] at 0 -0.7
\put{Starting values: $p,\,q\equiv0$, 1st (red), 2nd (green), 6th (blue) iterate versus actual coefficients (black)} [l] at 0 -2
\endpicture
}
\setbox\figurelegendtexttwo=\vbox{
\beginpicture
\put{{\bf Row 1:} $f(u) = u^2$, $p$ from: 2run data,\ \ $T_1$-$T_2$ far,\ \ $T_1$-$T_2$ near} [l] at 0 3
\put{{\bf Row 2:} $f(u) = u^2$, $q$ from: 2run data\quad $T_1$-$T_2$ far\quad $T_1$-$T_2$ near} [l] at 6.0 3
\put{{\bf Row 3:} $f(u) = u^3$, $p$ from: 2run data\quad $T_1$-$T_2$ far\quad $T_1$-$T_2$ near} [l] at 0 1
\put{{\bf Row 4:} $f(u) = u^3$, $q$ from: 2run data\quad $T_1$-$T_2$ far\quad $T_1$-$T_2$ near} [l] at 6 1
\endpicture
}
\xfigdim = 1.75true in
\yfigdim = 1.0true in
\setbox\figureone=\vbox{\hsize=\xfiglen
\beginpicture
\linethickness=0.25pt
\scriptsize
  \setcoordinatesystem units <\xfigdim,\yfigdim>  point at 0 0
  \setplotarea x from 0 to 1, y from -0.2 to 1.4
  \axis bottom shiftedto y=0 ticks short numbered from 0 to 1 by 0.2 /
  \axis left ticks short numbered from 0.2 to 1.4 by 0.4 /
  \put {$p(x)$} [lt] at 0.025 1.4
\setquadratic
\setlinear
\setsolid
\Black{\relax  
\plot
  0.0000   0.1000
  0.0150   0.1000
  0.0300   0.1000
  0.0450   0.1000
  0.0600   0.1000
  0.0750   0.1000
  0.0900   0.1000
  0.1050   0.1000
  0.1200   0.1000
  0.1350   0.1000
  0.1500   0.1000
  0.1650   0.1000
  0.1800   0.1000
  0.1950   0.1000
  0.2100   0.1000
  0.2250   0.1000
  0.2400   0.1000
  0.2550   0.1000
  0.2700   0.1000
  0.2850   0.1000
  0.3000   0.1000
  0.3150   0.1000
  0.3300   0.1000
  0.3450   0.1000
  0.3600   0.1000
  0.3750   0.1000
  0.3900   0.1000
  0.4050   0.1975
  0.4200   0.4600
  0.4350   0.6775
  0.4500   0.8500
  0.4650   0.9775
  0.4800   1.0600
  0.4950   1.0975
  0.5100   1.0900
  0.5250   1.0375
  0.5400   0.9400
  0.5550   0.7975
  0.5700   0.6100
  0.5850   0.3775
  0.6000   0.1000
  0.6150   0.1000
  0.6300   0.1000
  0.6450   0.1000
  0.6600   0.1000
  0.6750   0.1000
  0.6900   0.1000
  0.7050   0.1000
  0.7200   0.1000
  0.7350   0.1000
  0.7500   0.1000
  0.7650   0.1000
  0.7800   0.1000
  0.7950   0.1000
  0.8100   0.1000
  0.8250   0.1000
  0.8400   0.1000
  0.8550   0.1000
  0.8700   0.1000
  0.8850   0.1000
  0.9000   0.1000
  0.9150   0.1000
  0.9300   0.1000
  0.9450   0.1000
  0.9600   0.1000
  0.9750   0.1000
  0.9900   0.1000
/}\relax\relax
%
\linethickness=0.15pt
\thinlines
\Red {\relax   
\plot
  0.0000   0.1340
  0.0150   0.1336
  0.0300   0.1330
  0.0450   0.1320
  0.0600   0.1308
  0.0750   0.1293
  0.0900   0.1276
  0.1050   0.1259
  0.1200   0.1241
  0.1350   0.1223
  0.1500   0.1207
  0.1650   0.1190
  0.1800   0.1175
  0.1950   0.1161
  0.2100   0.1148
  0.2250   0.1136
  0.2400   0.1124
  0.2550   0.1113
  0.2700   0.1103
  0.2850   0.1093
  0.3000   0.1082
  0.3150   0.1072
  0.3300   0.1060
  0.3450   0.1048
  0.3600   0.1035
  0.3750   0.1020
  0.3900   0.1004
  0.4050   0.1962
  0.4200   0.4569
  0.4350   0.6726
  0.4500   0.8434
  0.4650   0.9695
  0.4800   1.0510
  0.4950   1.0882
  0.5100   1.0811
  0.5250   1.0296
  0.5400   0.9336
  0.5550   0.7929
  0.5700   0.6072
  0.5850   0.3765
  0.6000   0.1006
  0.6150   0.1021
  0.6300   0.1034
  0.6450   0.1045
  0.6600   0.1055
  0.6750   0.1064
  0.6900   0.1072
  0.7050   0.1079
  0.7200   0.1086
  0.7350   0.1094
  0.7500   0.1102
  0.7650   0.1110
  0.7800   0.1119
  0.7950   0.1128
  0.8100   0.1139
  0.8250   0.1150
  0.8400   0.1163
  0.8550   0.1176
  0.8700   0.1190
  0.8850   0.1205
  0.9000   0.1220
  0.9150   0.1235
  0.9300   0.1249
  0.9450   0.1261
  0.9600   0.1272
  0.9750   0.1279
  0.9900   0.1283
/}\relax\relax
\Green{\relax   
\plot
  0.0000   0.1330
  0.0150   0.1327
  0.0300   0.1320
  0.0450   0.1310
  0.0600   0.1297
  0.0750   0.1281
  0.0900   0.1264
  0.1050   0.1247
  0.1200   0.1230
  0.1350   0.1214
  0.1500   0.1198
  0.1650   0.1184
  0.1800   0.1171
  0.1950   0.1160
  0.2100   0.1150
  0.2250   0.1142
  0.2400   0.1135
  0.2550   0.1129
  0.2700   0.1124
  0.2850   0.1121
  0.3000   0.1119
  0.3150   0.1117
  0.3300   0.1118
  0.3450   0.1119
  0.3600   0.1121
  0.3750   0.1125
  0.3900   0.1130
  0.4050   0.2112
  0.4200   0.4744
  0.4350   0.6927
  0.4500   0.8659
  0.4650   0.9942
  0.4800   1.0772
  0.4950   1.1150
  0.5100   1.1075
  0.5250   1.0546
  0.5400   0.9566
  0.5550   0.8134
  0.5700   0.6252
  0.5850   0.3920
  0.6000   0.1138
  0.6150   0.1133
  0.6300   0.1128
  0.6450   0.1125
  0.6600   0.1124
  0.6750   0.1123
  0.6900   0.1124
  0.7050   0.1125
  0.7200   0.1128
  0.7350   0.1132
  0.7500   0.1137
  0.7650   0.1144
  0.7800   0.1151
  0.7950   0.1161
  0.8100   0.1171
  0.8250   0.1183
  0.8400   0.1197
  0.8550   0.1212
  0.8700   0.1228
  0.8850   0.1245
  0.9000   0.1263
  0.9150   0.1280
  0.9300   0.1297
  0.9450   0.1312
  0.9600   0.1325
  0.9750   0.1334
  0.9900   0.1339
/}\relax\relax
\Blue{\relax  
\plot
  0.0000   0.1111
  0.0150   0.1108
  0.0300   0.1105
  0.0450   0.1100
  0.0600   0.1094
  0.0750   0.1087
  0.0900   0.1079
  0.1050   0.1072
  0.1200   0.1065
  0.1350   0.1058
  0.1500   0.1052
  0.1650   0.1046
  0.1800   0.1042
  0.1950   0.1038
  0.2100   0.1035
  0.2250   0.1032
  0.2400   0.1031
  0.2550   0.1030
  0.2700   0.1029
  0.2850   0.1030
  0.3000   0.1031
  0.3150   0.1033
  0.3300   0.1036
  0.3450   0.1040
  0.3600   0.1045
  0.3750   0.1052
  0.3900   0.1060
  0.4050   0.2045
  0.4200   0.4680
  0.4350   0.6867
  0.4500   0.8603
  0.4650   0.9888
  0.4800   1.0720
  0.4950   1.1099
  0.5100   1.1023
  0.5250   1.0494
  0.5400   0.9511
  0.5550   0.8075
  0.5700   0.6189
  0.5850   0.3853
  0.6000   0.1068
  0.6150   0.1059
  0.6300   0.1052
  0.6450   0.1046
  0.6600   0.1041
  0.6750   0.1037
  0.6900   0.1035
  0.7050   0.1033
  0.7200   0.1032
  0.7350   0.1032
  0.7500   0.1033
  0.7650   0.1034
  0.7800   0.1036
  0.7950   0.1039
  0.8100   0.1043
  0.8250   0.1047
  0.8400   0.1052
  0.8550   0.1058
  0.8700   0.1064
  0.8850   0.1071
  0.9000   0.1079
  0.9150   0.1087
  0.9300   0.1094
  0.9450   0.1102
  0.9600   0.1107
  0.9750   0.1112
  0.9900   0.1114
/}\relax\relax
\endpicture
}   
\setbox\figuretwo=\vbox{\hsize=\xfiglen 
\beginpicture
\scriptsize
  \setcoordinatesystem units <\xfigdim,\yfigdim>  point at 0 0
  \setplotarea x from 0 to 1, y from -0.2 to 1.4
  \axis bottom shiftedto y=0.0 ticks short numbered from 0 to 1 by 0.2 /
  \axis left ticks short numbered from 0.2 to 1.4 by 0.4 /
  \put {$p(x)$} [lt] at 0.025 1.4
\setlinear
\linethickness=0.5pt
\setsolid
\Black{\relax  
\plot
  0.0000   0.1000
  0.0150   0.1000
  0.0300   0.1000
  0.0450   0.1000
  0.0600   0.1000
  0.0750   0.1000
  0.0900   0.1000
  0.1050   0.1000
  0.1200   0.1000
  0.1350   0.1000
  0.1500   0.1000
  0.1650   0.1000
  0.1800   0.1000
  0.1950   0.1000
  0.2100   0.1000
  0.2250   0.1000
  0.2400   0.1000
  0.2550   0.1000
  0.2700   0.1000
  0.2850   0.1000
  0.3000   0.1000
  0.3150   0.1000
  0.3300   0.1000
  0.3450   0.1000
  0.3600   0.1000
  0.3750   0.1000
  0.3900   0.1000
  0.4050   0.1975
  0.4200   0.4600
  0.4350   0.6775
  0.4500   0.8500
  0.4650   0.9775
  0.4800   1.0600
  0.4950   1.0975
  0.5100   1.0900
  0.5250   1.0375
  0.5400   0.9400
  0.5550   0.7975
  0.5700   0.6100
  0.5850   0.3775
  0.6000   0.1000
  0.6150   0.1000
  0.6300   0.1000
  0.6450   0.1000
  0.6600   0.1000
  0.6750   0.1000
  0.6900   0.1000
  0.7050   0.1000
  0.7200   0.1000
  0.7350   0.1000
  0.7500   0.1000
  0.7650   0.1000
  0.7800   0.1000
  0.7950   0.1000
  0.8100   0.1000
  0.8250   0.1000
  0.8400   0.1000
  0.8550   0.1000
  0.8700   0.1000
  0.8850   0.1000
  0.9000   0.1000
  0.9150   0.1000
  0.9300   0.1000
  0.9450   0.1000
  0.9600   0.1000
  0.9750   0.1000
  0.9900   0.1000
/}\relax
\setsolid
%
\linethickness=0.15pt
\Red{\relax   
\plot
  0.0000   0.0586
  0.0150   0.0584
  0.0300   0.0577
  0.0450   0.0567
  0.0600   0.0556
  0.0750   0.0545
  0.0900   0.0537
  0.1050   0.0530
  0.1200   0.0527
  0.1350   0.0525
  0.1500   0.0526
  0.1650   0.0528
  0.1800   0.0531
  0.1950   0.0534
  0.2100   0.0536
  0.2250   0.0538
  0.2400   0.0539
  0.2550   0.0539
  0.2700   0.0538
  0.2850   0.0537
  0.3000   0.0536
  0.3150   0.0535
  0.3300   0.0536
  0.3450   0.0541
  0.3600   0.0550
  0.3750   0.0567
  0.3900   0.0592
  0.4050   0.1584
  0.4200   0.4204
  0.4350   0.6400
  0.4500   0.8173
  0.4650   0.9519
  0.4800   1.0433
  0.4950   1.0904
  0.5100   1.0921
  0.5250   1.0476
  0.5400   0.9566
  0.5550   0.8191
  0.5700   0.6355
  0.5850   0.4064
  0.6000   0.1323
  0.6150   0.1306
  0.6300   0.1285
  0.6450   0.1263
  0.6600   0.1243
  0.6750   0.1224
  0.6900   0.1208
  0.7050   0.1195
  0.7200   0.1185
  0.7350   0.1178
  0.7500   0.1175
  0.7650   0.1177
  0.7800   0.1183
  0.7950   0.1196
  0.8100   0.1222
  0.8250   0.1262
  0.8400   0.1312
  0.8550   0.1374
  0.8700   0.1448
  0.8850   0.1535
  0.9000   0.1632
  0.9150   0.1737
  0.9300   0.1845
  0.9450   0.1949
  0.9600   0.2039
  0.9750   0.2108
  0.9900   0.2148
/
}\relax
\Green{\relax   
\plot
  0.0000   0.1619
  0.0150   0.1586
  0.0300   0.1496
  0.0450   0.1359
  0.0600   0.1192
  0.0750   0.1013
  0.0900   0.0836
  0.1050   0.0674
  0.1200   0.0532
  0.1350   0.0414
  0.1500   0.0318
  0.1650   0.0243
  0.1800   0.0187
  0.1950   0.0146
  0.2100   0.0118
  0.2250   0.0101
  0.2400   0.0094
  0.2550   0.0096
  0.2700   0.0109
  0.2850   0.0134
  0.3000   0.0175
  0.3150   0.0236
  0.3300   0.0323
  0.3450   0.0444
  0.3600   0.0608
  0.3750   0.0825
  0.3900   0.1104
  0.4050   0.2404
  0.4200   0.5382
  0.4350   0.7964
  0.4500   1.0117
  0.4650   1.1788
  0.4800   1.2914
  0.4950   1.3439
  0.5100   1.3332
  0.5250   1.2600
  0.5400   1.1284
  0.5550   0.9445
  0.5700   0.7143
  0.5850   0.4426
  0.6000   0.1321
  0.6150   0.1008
  0.6300   0.0759
  0.6450   0.0567
  0.6600   0.0422
  0.6750   0.0316
  0.6900   0.0240
  0.7050   0.0186
  0.7200   0.0149
  0.7350   0.0126
  0.7500   0.0112
  0.7650   0.0108
  0.7800   0.0111
  0.7950   0.0124
  0.8100   0.0153
  0.8250   0.0198
  0.8400   0.0260
  0.8550   0.0340
  0.8700   0.0443
  0.8850   0.0568
  0.9000   0.0716
  0.9150   0.0883
  0.9300   0.1060
  0.9450   0.1236
  0.9600   0.1394
  0.9750   0.1516
  0.9900   0.1588
/}\relax
\Blue{\relax  
\plot
  0.0000   0.1185
  0.0150   0.1177
  0.0300   0.1154
  0.0450   0.1119
  0.0600   0.1076
  0.0750   0.1030
  0.0900   0.0984
  0.1050   0.0941
  0.1200   0.0904
  0.1350   0.0872
  0.1500   0.0846
  0.1650   0.0826
  0.1800   0.0810
  0.1950   0.0799
  0.2100   0.0791
  0.2250   0.0786
  0.2400   0.0785
  0.2550   0.0785
  0.2700   0.0789
  0.2850   0.0797
  0.3000   0.0808
  0.3150   0.0825
  0.3300   0.0850
  0.3450   0.0883
  0.3600   0.0927
  0.3750   0.0986
  0.3900   0.1059
  0.4050   0.2105
  0.4200   0.4783
  0.4350   0.7031
  0.4500   0.8840
  0.4650   1.0198
  0.4800   1.1087
  0.4950   1.1495
  0.5100   1.1413
  0.5250   1.0843
  0.5400   0.9795
  0.5550   0.8285
  0.5700   0.6328
  0.5850   0.3936
  0.6000   0.1116
  0.6150   0.1041
  0.6300   0.0981
  0.6450   0.0934
  0.6600   0.0898
  0.6750   0.0870
  0.6900   0.0850
  0.7050   0.0834
  0.7200   0.0822
  0.7350   0.0813
  0.7500   0.0806
  0.7650   0.0801
  0.7800   0.0797
  0.7950   0.0795
  0.8100   0.0801
  0.8250   0.0814
  0.8400   0.0830
  0.8550   0.0852
  0.8700   0.0880
  0.8850   0.0913
  0.9000   0.0952
  0.9150   0.0995
  0.9300   0.1041
  0.9450   0.1086
  0.9600   0.1126
  0.9750   0.1157
  0.9900   0.1175
/}\relax
\endpicture
}   
\setbox\figurethree=\vbox{\hsize=\xfiglen 
\beginpicture
\scriptsize
  \setcoordinatesystem units <\xfigdim,\yfigdim>  point at 0 0
  \setplotarea x from 0 to 1, y from -0.2 to 1.4
  \axis bottom shiftedto y=0.0 ticks short numbered from 0 to 1 by 0.2 /
  \axis left ticks short numbered from 0.2 to 1.4 by 0.4 /
  \put {$p(x)$} [lt] at 0.025 1.4
\setlinear
\setsolid 
\linethickness=0.5pt
\linethickness=0.5pt
\Black{\relax  
\plot
  0.0000   0.1000
  0.0150   0.1000
  0.0300   0.1000
  0.0450   0.1000
  0.0600   0.1000
  0.0750   0.1000
  0.0900   0.1000
  0.1050   0.1000
  0.1200   0.1000
  0.1350   0.1000
  0.1500   0.1000
  0.1650   0.1000
  0.1800   0.1000
  0.1950   0.1000
  0.2100   0.1000
  0.2250   0.1000
  0.2400   0.1000
  0.2550   0.1000
  0.2700   0.1000
  0.2850   0.1000
  0.3000   0.1000
  0.3150   0.1000
  0.3300   0.1000
  0.3450   0.1000
  0.3600   0.1000
  0.3750   0.1000
  0.3900   0.1000
  0.4050   0.1975
  0.4200   0.4600
  0.4350   0.6775
  0.4500   0.8500
  0.4650   0.9775
  0.4800   1.0600
  0.4950   1.0975
  0.5100   1.0900
  0.5250   1.0375
  0.5400   0.9400
  0.5550   0.7975
  0.5700   0.6100
  0.5850   0.3775
  0.6000   0.1000
  0.6150   0.1000
  0.6300   0.1000
  0.6450   0.1000
  0.6600   0.1000
  0.6750   0.1000
  0.6900   0.1000
  0.7050   0.1000
  0.7200   0.1000
  0.7350   0.1000
  0.7500   0.1000
  0.7650   0.1000
  0.7800   0.1000
  0.7950   0.1000
  0.8100   0.1000
  0.8250   0.1000
  0.8400   0.1000
  0.8550   0.1000
  0.8700   0.1000
  0.8850   0.1000
  0.9000   0.1000
  0.9150   0.1000
  0.9300   0.1000
  0.9450   0.1000
  0.9600   0.1000
  0.9750   0.1000
  0.9900   0.1000
/}\relax
\setsolid
\linethickness=0.15pt
\thinlines
\Red{\relax   
\plot
 0.0000  -0.2226
  0.0150  -0.2225
  0.0300  -0.2216
  0.0450  -0.2202
  0.0600  -0.2181
  0.0750  -0.2156
  0.0900  -0.2127
  0.1050  -0.2095
  0.1200  -0.2063
  0.1350  -0.2031
  0.1500  -0.2002
  0.1650  -0.1979
  0.1800  -0.1962
  0.1950  -0.1953
  0.2100  -0.1953
  0.2250  -0.1964
  0.2400  -0.1987
  0.2550  -0.2022
  0.2700  -0.2071
  0.2850  -0.2132
  0.3000  -0.2208
  0.3150  -0.2297
  0.3300  -0.2399
  0.3450  -0.2512
  0.3600  -0.2634
  0.3750  -0.2760
  0.3900  -0.2884
  0.4050  -0.2038
  0.4200   0.0464
  0.4350   0.2591
  0.4500   0.4384
  0.4650   0.5881
  0.4800   0.7101
  0.4950   0.8037
  0.5100   0.8653
  0.5250   0.8888
  0.5400   0.8676
  0.5550   0.7960
  0.5700   0.6704
  0.5850   0.4891
  0.6000   0.2521
  0.6150   0.2794
  0.6300   0.2981
  0.6450   0.3104
  0.6600   0.3183
  0.6750   0.3232
  0.6900   0.3265
  0.7050   0.3292
  0.7200   0.3321
  0.7350   0.3360
  0.7500   0.3413
  0.7650   0.3488
  0.7800   0.3591
  0.7950   0.3729
  0.8100   0.3917
  0.8250   0.4157
  0.8400   0.4448
  0.8550   0.4790
  0.8700   0.5180
  0.8850   0.5614
  0.9000   0.6081
  0.9150   0.6565
  0.9300   0.7045
  0.9450   0.7492
  0.9600   0.7874
  0.9750   0.8160
  0.9900   0.8322
/}\relax
\Green{\relax   
\plot
 0.0000   0.0339
  0.0150   0.0326
  0.0300   0.0292
  0.0450   0.0239
  0.0600   0.0174
  0.0750   0.0102
  0.0900   0.0031
  0.1050  -0.0036
  0.1200  -0.0096
  0.1350  -0.0146
  0.1500  -0.0186
  0.1650  -0.0217
  0.1800  -0.0238
  0.1950  -0.0250
  0.2100  -0.0253
  0.2250  -0.0249
  0.2400  -0.0235
  0.2550  -0.0212
  0.2700  -0.0177
  0.2850  -0.0129
  0.3000  -0.0063
  0.3150   0.0024
  0.3300   0.0139
  0.3450   0.0288
  0.3600   0.0479
  0.3750   0.0721
  0.3900   0.1020
  0.4050   0.2341
  0.4200   0.5346
  0.4350   0.7953
  0.4500   1.0135
  0.4650   1.1849
  0.4800   1.3044
  0.4950   1.3669
  0.5100   1.3693
  0.5250   1.3108
  0.5400   1.1939
  0.5550   1.0229
  0.5700   0.8024
  0.5850   0.5366
  0.6000   0.2284
  0.6150   0.1985
  0.6300   0.1729
  0.6450   0.1517
  0.6600   0.1346
  0.6750   0.1212
  0.6900   0.1111
  0.7050   0.1037
  0.7200   0.0989
  0.7350   0.0962
  0.7500   0.0955
  0.7650   0.0966
  0.7800   0.0996
  0.7950   0.1045
  0.8100   0.1120
  0.8250   0.1221
  0.8400   0.1344
  0.8550   0.1491
  0.8700   0.1661
  0.8850   0.1853
  0.9000   0.2061
  0.9150   0.2280
  0.9300   0.2499
  0.9450   0.2704
  0.9600   0.2881
  0.9750   0.3013
  0.9900   0.3089
/}\relax
\Blue{\relax  
\plot
 0.0000   0.1344
  0.0150   0.1323
  0.0300   0.1265
  0.0450   0.1175
  0.0600   0.1063
  0.0750   0.0938
  0.0900   0.0809
  0.1050   0.0684
  0.1200   0.0569
  0.1350   0.0466
  0.1500   0.0377
  0.1650   0.0303
  0.1800   0.0243
  0.1950   0.0196
  0.2100   0.0162
  0.2250   0.0140
  0.2400   0.0130
  0.2550   0.0132
  0.2700   0.0147
  0.2850   0.0177
  0.3000   0.0225
  0.3150   0.0293
  0.3300   0.0387
  0.3450   0.0510
  0.3600   0.0670
  0.3750   0.0872
  0.3900   0.1120
  0.4050   0.2375
  0.4200   0.5299
  0.4350   0.7805
  0.4500   0.9868
  0.4650   1.1450
  0.4800   1.2506
  0.4950   1.2998
  0.5100   1.2904
  0.5250   1.2229
  0.5400   1.1000
  0.5550   0.9259
  0.5700   0.7048
  0.5850   0.4404
  0.6000   0.1347
  0.6150   0.1080
  0.6300   0.0858
  0.6450   0.0678
  0.6600   0.0536
  0.6750   0.0426
  0.6900   0.0343
  0.7050   0.0282
  0.7200   0.0240
  0.7350   0.0212
  0.7500   0.0197
  0.7650   0.0194
  0.7800   0.0202
  0.7950   0.0221
  0.8100   0.0258
  0.8250   0.0312
  0.8400   0.0380
  0.8550   0.0463
  0.8700   0.0561
  0.8850   0.0673
  0.9000   0.0796
  0.9150   0.0928
  0.9300   0.1062
  0.9450   0.1188
  0.9600   0.1297
  0.9750   0.1380
  0.9900   0.1427
/}\relax
\endpicture
}  
\setbox\figureten=\hbox to \hsize{\copy\figureone\hss\copy\figuretwo\hss\copy\figurethree}
%
%
\setbox\figurefour=\vbox{\hsize=\xfiglen
\beginpicture
\scriptsize
  \setcoordinatesystem units <\xfigdim,0.56\yfigdim>  point at 0 0
  \setplotarea x from 0 to 1, y from -0.5 to 2.2
  \axis bottom shiftedto y=0 ticks short numbered from 0 to 1 by 0.2 /
  \axis left ticks short numbered from 0.0 to 2.0 by 1.0 /
  \put {$q(x)$} [lt] at 0.025 2.2
\setlinear
\setsolid 
\Black{\relax  
\plot
0.0000   0.1345
0.0150   0.1301
0.0300   0.1259
0.0450   0.1219
0.0600   0.1181
0.0750   0.1146
0.0900   0.1115
0.1050   0.1086
0.1200   0.1062
0.1350   0.1041
0.1500   0.1024
0.1650   0.1012
0.1800   0.1004
0.1950   0.1000
0.2100   0.1001
0.2250   0.1006
0.2400   0.1016
0.2550   0.1030
0.2700   0.1048
0.2850   0.1070
0.3000   0.1095
0.3150   0.1125
0.3300   0.1158
0.3450   0.1194
0.3600   0.1232
0.3750   0.1273
0.3900   0.1316
0.4050   0.1361
0.4200   0.1406
0.4350   0.1453
0.4500   0.1500
0.4650   0.1547
0.4800   0.1594
0.4950   0.1639
0.5100   0.1684
0.5250   0.1727
0.5400   0.1768
0.5550   0.1806
0.5700   0.1842
0.5850   0.1875
0.6000   0.1905
0.6150   0.7287
0.6300   1.1910
0.6450   1.5714
0.6600   1.8652
0.6750   2.0686
0.6900   2.1789
0.7050   2.1947
0.7200   2.1158
0.7350   1.9432
0.7500   1.6792
0.7650   1.3271
0.7800   0.8916
0.7950   0.3779
0.8100   0.1885
0.8250   0.1854
0.8400   0.1819
0.8550   0.1781
0.8700   0.1741
0.8850   0.1699
0.9000   0.1655
0.9150   0.1609
0.9300   0.1563
0.9450   0.1516
0.9600   0.1469
0.9750   0.1422
0.9900   0.1376
/}
\setsolid
\Red{\relax   
\!\!\plot
0.0000   0.4766
0.0150   0.4709
0.0300   0.4640
0.0450   0.4556
0.0600   0.4460
0.0750   0.4353
0.0900   0.4240
0.1050   0.4123
0.1200   0.4004
0.1350   0.3887
0.1500   0.3772
0.1650   0.3662
0.1800   0.3558
0.1950   0.3461
0.2100   0.3372
0.2250   0.3291
0.2400   0.3218
0.2550   0.3152
0.2700   0.3094
0.2850   0.3042
0.3000   0.2996
0.3150   0.2954
0.3300   0.2914
0.3450   0.2877
0.3600   0.2839
0.3750   0.2799
0.3900   0.2756
0.4050   0.2711
0.4200   0.2664
0.4350   0.2618
0.4500   0.2579
0.4650   0.2553
0.4800   0.2547
0.4950   0.2568
0.5100   0.2617
0.5250   0.2691
0.5400   0.2785
0.5550   0.2889
0.5700   0.2995
0.5850   0.3097
0.6000   0.3191
0.6150   0.8632
0.6300   1.3308
0.6450   1.7160
0.6600   2.0141
0.6750   2.2215
0.6900   2.3358 
0.7050   2.3555
0.7200   2.2807
0.7350   2.1124
0.7500   1.8530
0.7650   1.5059
0.7800   1.0757
0.7950   0.5678
0.8100   0.3845
0.8250   0.3879
0.8400   0.3912
0.8550   0.3945
0.8700   0.3976
0.8850   0.4005
0.9000   0.4030
0.9150   0.4050
0.9300   0.4064
0.9450   0.4068
0.9600   0.4063
0.9750   0.4045
0.9900   0.4016
/}\relax
\Green{\relax   
\plot
0.0000   0.3752
0.0150   0.3698
0.0300   0.3634
0.0450   0.3558
0.0600   0.3474
0.0750   0.3383
0.0900   0.3288
0.1050   0.3192
0.1200   0.3097
0.1350   0.3005
0.1500   0.2918
0.1650   0.2836
0.1800   0.2762
0.1950   0.2696
0.2100   0.2638
0.2250   0.2589
0.2400   0.2549
0.2550   0.2518
0.2700   0.2497
0.2850   0.2484
0.3000   0.2481
0.3150   0.2487
0.3300   0.2502
0.3450   0.2526
0.3600   0.2558
0.3750   0.2600
0.3900   0.2649
0.4050   0.2707
0.4200   0.2771
0.4350   0.2840
0.4500   0.2911
0.4650   0.2981
0.4800   0.3046
0.4950   0.3102
0.5100   0.3148
0.5250   0.3182
0.5400   0.3207
0.5550   0.3227
0.5700   0.3243
0.5850   0.3259
0.6000   0.3276
0.6150   0.8652
0.6300   1.3273
0.6450   1.7083
0.6600   2.0032
0.6750   2.2083
0.6900   2.3209
0.7050   2.3394
0.7200   2.2638
0.7350   2.0949
0.7500   1.8351
0.7650   1.4877
0.7800   1.0573
0.7950   0.5492
0.8100   0.3659
0.8250   0.3691
0.8400   0.3724
0.8550   0.3756
0.8700   0.3788
0.8850   0.3818
0.9000   0.3844
0.9150   0.3866
0.9300   0.3881
0.9450   0.3888
0.9600   0.3885
0.9750   0.3870
0.9900   0.3841
/}\relax
\Blue{\relax  
\plot
0.0000   0.1913
0.0150   0.1864
0.0300   0.1814
0.0450   0.1762
0.0600   0.1709
0.0750   0.1656
0.0900   0.1605
0.1050   0.1556
0.1200   0.1510
0.1350   0.1469
0.1500   0.1433
0.1650   0.1402
0.1800   0.1377
0.1950   0.1359
0.2100   0.1346
0.2250   0.1339
0.2400   0.1339
0.2550   0.1346
0.2700   0.1359
0.2850   0.1378
0.3000   0.1405
0.3150   0.1438
0.3300   0.1479
0.3450   0.1527
0.3600   0.1583
0.3750   0.1646
0.3900   0.1718
0.4050   0.1797
0.4200   0.1883
0.4350   0.1972
0.4500   0.2062
0.4650   0.2147
0.4800   0.2223
0.4950   0.2284
0.5100   0.2326
0.5250   0.2352
0.5400   0.2362
0.5550   0.2362
0.5700   0.2356
0.5850   0.2348
0.6000   0.2341
0.6150   0.7693
0.6300   1.2290
0.6450   1.6075
0.6600   1.8998
0.6750   2.1023
0.6900   2.2120
0.7050   2.2277
0.7200   2.1489
0.7350   1.9767
0.7500   1.7134
0.7650   1.3622
0.7800   0.9277
0.7950   0.4154
0.8100   0.2275
0.8250   0.2260
0.8400   0.2243
0.8550   0.2225
0.8700   0.2206
0.8850   0.2185
0.9000   0.2163
0.9150   0.2139
0.9300   0.2112
0.9450   0.2083
0.9600   0.2051
0.9750   0.2015
0.9900   0.1975
/}\relax
\endpicture
}   
\setbox\figuresix=\vbox{\hsize=\xfiglen
\beginpicture
\scriptsize
  \setcoordinatesystem units <\xfigdim,0.56\yfigdim>  point at 0 0
  \setplotarea x from 0 to 1, y from -0.5 to 2.2
  \axis bottom shiftedto y=0 ticks short numbered from 0 to 1 by 0.2 /
  \axis left ticks short numbered from 0.0 to 2.0 by 1.0 /
  \put {$q(x)$} [lt] at 0.025 2.2
\setlinear
\setsolid 
\Black{\relax  
\plot
0.0000   0.1345
0.0150   0.1301
0.0300   0.1259
0.0450   0.1219
0.0600   0.1181
0.0750   0.1146
0.0900   0.1115
0.1050   0.1086
0.1200   0.1062
0.1350   0.1041
0.1500   0.1024
0.1650   0.1012
0.1800   0.1004
0.1950   0.1000
0.2100   0.1001
0.2250   0.1006
0.2400   0.1016
0.2550   0.1030
0.2700   0.1048
0.2850   0.1070
0.3000   0.1095
0.3150   0.1125
0.3300   0.1158
0.3450   0.1194
0.3600   0.1232
0.3750   0.1273
0.3900   0.1316
0.4050   0.1361
0.4200   0.1406
0.4350   0.1453
0.4500   0.1500
0.4650   0.1547
0.4800   0.1594
0.4950   0.1639
0.5100   0.1684
0.5250   0.1727
0.5400   0.1768
0.5550   0.1806
0.5700   0.1842
0.5850   0.1875
0.6000   0.1905
0.6150   0.7287
0.6300   1.1910
0.6450   1.5714
0.6600   1.8652
0.6750   2.0686
0.6900   2.1789
0.7050   2.1947
0.7200   2.1158
0.7350   1.9432
0.7500   1.6792
0.7650   1.3271
0.7800   0.8916
0.7950   0.3779
0.8100   0.1885
0.8250   0.1854
0.8400   0.1819
0.8550   0.1781
0.8700   0.1741
0.8850   0.1699
0.9000   0.1655
0.9150   0.1609
0.9300   0.1563
0.9450   0.1516
0.9600   0.1469
0.9750   0.1422
0.9900   0.1376
/}\relax
\setsolid
\Red{\relax   
\plot
0.0000   0.0523
0.0150   0.0387
0.0300   0.0070
0.0450  -0.0400
0.0600  -0.0992
0.0750  -0.1665
0.0900  -0.2382
0.1050  -0.3107
0.1200  -0.3812
0.1350  -0.4473
0.1500  -0.5075
0.1650  -0.5605
0.1800  -0.6056
0.1950  -0.6423
0.2100  -0.6700
0.2250  -0.6885
0.2400  -0.6972
0.2550  -0.6957
0.2700  -0.6832
0.2850  -0.6588
0.3000  -0.6214
0.3150  -0.5695
0.3300  -0.5015
0.3450  -0.4153
0.3600  -0.3090
0.3750  -0.1810
0.3900  -0.0301
0.4050   0.1339
0.4200   0.3030
0.4350   0.4917
0.4500   0.6896
0.4650   0.8813
0.4800   1.0477
0.4950   1.1699
0.5100   1.2348
0.5250   1.2390
0.5400   1.1896
0.5550   1.1011
0.5700   0.9905
0.5850   0.8734
0.6000   0.7614
0.6150   1.1757
0.6300   1.5256
0.6450   1.8076
0.6600   2.0176
0.6750   2.1515
0.6900   2.2062
0.7050   2.1793
0.7200   2.0700
0.7350   1.8787
0.7500   1.6072
0.7650   1.2590
0.7800   0.8386
0.7950   0.3517
0.8100   0.2067
0.8250   0.2636
0.8400   0.3324
0.8550   0.4123
0.8700   0.5020
0.8850   0.5995
0.9000   0.7021
0.9150   0.8059
0.9300   0.9062
0.9450   0.9973
0.9600   1.0731
0.9750   1.1279
0.9900   1.1568
/}\relax\relax
\Green{\relax   
\plot
0.0000   0.4851
0.0150   0.4695
0.0300   0.4325
0.0450   0.3768
0.0600   0.3067
0.0750   0.2269
0.0900   0.1419
0.1050   0.0560
0.1200  -0.0272
0.1350  -0.1050
0.1500  -0.1754
0.1650  -0.2371
0.1800  -0.2889
0.1950  -0.3303
0.2100  -0.3606
0.2250  -0.3795
0.2400  -0.3863
0.2550  -0.3804
0.2700  -0.3610
0.2850  -0.3268
0.3000  -0.2766
0.3150  -0.2085
0.3300  -0.1206
0.3450  -0.0106
0.3600   0.1236
0.3750   0.2839
0.3900   0.4712
0.4050   0.6752
0.4200   0.8856
0.4350   1.1116
0.4500   1.3369
0.4650   1.5394
0.4800   1.6943
0.4950   1.7796
0.5100   1.7829
0.5250   1.7058
0.5400   1.5630
0.5550   1.3772
0.5700   1.1727
0.5850   0.9699
0.6000   0.7835
0.6150   1.1358
0.6300   1.4354
0.6450   1.6775
0.6600   1.8564
0.6750   1.9665
0.6900   2.0030
0.7050   1.9627
0.7200   1.8434
0.7350   1.6448
0.7500   1.3680
0.7650   1.0153
0.7800   0.5906
0.7950   0.0990
0.8100  -0.0521
0.8250  -0.0032
0.8400   0.0560
0.8550   0.1249
0.8700   0.2021
0.8850   0.2861
0.9000   0.3743
0.9150   0.4636
0.9300   0.5497
0.9450   0.6278
0.9600   0.6928
0.9750   0.7395
0.9900   0.7638
/}\relax\relax
\Blue{
\plot
0.0000   0.5807
0.0150   0.5647
0.0300   0.5263
0.0450   0.4686
0.0600   0.3957
0.0750   0.3125
0.0900   0.2238
0.1050   0.1341
0.1200   0.0469
0.1350  -0.0349
0.1500  -0.1093
0.1650  -0.1748
0.1800  -0.2304
0.1950  -0.2754
0.2100  -0.3094
0.2250  -0.3318
0.2400  -0.3422
0.2550  -0.3400
0.2700  -0.3245
0.2850  -0.2945
0.3000  -0.2489
0.3150  -0.1861
0.3300  -0.1042
0.3450  -0.0014
0.3600   0.1244
0.3750   0.2748
0.3900   0.4503
0.4050   0.6404
0.4200   0.8349
0.4350   1.0432
0.4500   1.2494
0.4650   1.4325
0.4800   1.5692
0.4950   1.6389
0.5100   1.6306
0.5250   1.5466
0.5400   1.4015
0.5550   1.2174
0.5700   1.0172
0.5850   0.8205
0.6000   0.6410
0.6150   1.0001
0.6300   1.3063
0.6450   1.5542
0.6600   1.7379
0.6750   1.8520
0.6900   1.8918
 0.7050   1.8537
0.7200   1.7359
0.7350   1.5379
0.7500   1.2608
0.7650   0.9072
0.7800   0.4809
0.7950  -0.0131
0.8100  -0.1675
0.8250  -0.1225
0.8400  -0.0679
0.8550  -0.0043
0.8700   0.0670
0.8850   0.1445
0.9000   0.2261
0.9150   0.3085
0.9300   0.3880
0.9450   0.4601
0.9600   0.5199
0.9750   0.5629
0.9900   0.5851
/}\relax
\endpicture
}
\setbox\figurefive=\vbox{\hsize=\xfiglen 
\beginpicture
\scriptsize
  \setcoordinatesystem units <\xfigdim,0.56\yfigdim>  point at 0 0
  \setplotarea x from 0 to 1, y from -0.5 to 2.2
  \axis bottom shiftedto y=0 ticks short numbered from 0 to 1 by 0.2 /
  \axis left ticks short numbered from 0.0 to 2.0 by 1.0 /
  \put {$q(x)$} [lt] at 0.025 2.2
\setlinear
\setsolid
\Black{\relax  
\plot
0.0000   0.1345
0.0150   0.1301
0.0300   0.1259
0.0450   0.1219
0.0600   0.1181
0.0750   0.1146
0.0900   0.1115
0.1050   0.1086
0.1200   0.1062
0.1350   0.1041
0.1500   0.1024
0.1650   0.1012
0.1800   0.1004
0.1950   0.1000
0.2100   0.1001
0.2250   0.1006
0.2400   0.1016
0.2550   0.1030
0.2700   0.1048
0.2850   0.1070
0.3000   0.1095
0.3150   0.1125
0.3300   0.1158
0.3450   0.1194
0.3600   0.1232
0.3750   0.1273
0.3900   0.1316
0.4050   0.1361
0.4200   0.1406
0.4350   0.1453
0.4500   0.1500
0.4650   0.1547
0.4800   0.1594
0.4950   0.1639
0.5100   0.1684
0.5250   0.1727
0.5400   0.1768
0.5550   0.1806
0.5700   0.1842
0.5850   0.1875
0.6000   0.1905
0.6150   0.7287
0.6300   1.1910
0.6450   1.5714
0.6600   1.8652
0.6750   2.0686
0.6900   2.1789
0.7050   2.1947
0.7200   2.1158
0.7350   1.9432
0.7500   1.6792
0.7650   1.3271
0.7800   0.8916
0.7950   0.3779
0.8100   0.1885
0.8250   0.1854
0.8400   0.1819
0.8550   0.1781
0.8700   0.1741
0.8850   0.1699
0.9000   0.1655
0.9150   0.1609
0.9300   0.1563
0.9450   0.1516
0.9600   0.1469
0.9750   0.1422
0.9900   0.1376
/}
\setsolid
\Red{\relax   
\!\!\!\!
\plot
0.0000   0.0883
0.0150   0.0848
0.0300   0.0750
0.0450   0.0599
0.0600   0.0411
0.0750   0.0203
0.0900  -0.0011
0.1050  -0.0217
0.1200  -0.0407
0.1350  -0.0574
0.1500  -0.0718
0.1650  -0.0836
0.1800  -0.0931
0.1950  -0.1004
0.2100  -0.1056
0.2250  -0.1089
0.2400  -0.1102
0.2550  -0.1095
0.2700  -0.1067
0.2850  -0.1014
0.3000  -0.0932
0.3150  -0.0814
0.3300  -0.0653
0.3450  -0.0439
0.3600  -0.0160
0.3750   0.0194
0.3900   0.0631
0.4050   0.2102
0.4200   0.5260
0.4350   0.8047
0.4500   1.0431
0.4650   1.2354
0.4800   1.3744
0.4950   1.4529
0.5100   1.4658
0.5250   1.4118
0.5400   1.2935
0.5550   1.1166
0.5700   0.8876
0.5850   0.6122
0.6000   0.2945
0.6150   0.2529
0.6300   0.2163
0.6450   0.1852
0.6600   0.1596
0.6750   0.1390
0.6900   0.1230
0.7050   0.1110
0.7200   0.1027
0.7350   0.0978
0.7500   0.0960
0.7650   0.0974
0.7800   0.1020
0.7950   0.1100
0.8100   0.1234
0.8250   0.1418
0.8400   0.1647
0.8550   0.1922
0.8700   0.2242
0.8850   0.2604
0.9000   0.3000
0.9150   0.3416
0.9300   0.3832
0.9450   0.4223
0.9600   0.4560
0.9750   0.4813
0.9900   0.4958
/}\relax
\Green{\relax   
\!\!
\plot
0.0000  -0.1550
0.0150  -0.1550
0.0300  -0.1543
0.0450  -0.1533
0.0600  -0.1517
0.0750  -0.1495
0.0900  -0.1469
0.1050  -0.1437
0.1200  -0.1402
0.1350  -0.1365
0.1500  -0.1325
0.1650  -0.1285
0.1800  -0.1246
0.1950  -0.1207
0.2100  -0.1168
0.2250  -0.1130
0.2400  -0.1092
0.2550  -0.1051
0.2700  -0.1007
0.2850  -0.0955
0.3000  -0.0893
0.3150  -0.0814
0.3300  -0.0712
0.3450  -0.0578
0.3600  -0.0404
0.3750  -0.0177
0.3900   0.0113
0.4050   0.1436
0.4200   0.4462
0.4350   0.7115
0.4500   0.9373
0.4650   1.1200
0.4800   1.2542
0.4950   1.3343
0.5100   1.3555
0.5250   1.3159
0.5400   1.2161
0.5550   1.0596
0.5700   0.8506
0.5850   0.5935
0.6000   0.2914
0.6150   0.2657
0.6300   0.2428
0.6450   0.2234
0.6600   0.2075
0.6750   0.1951
0.6900   0.1859
0.7050   0.1798
0.7200   0.1764
0.7350   0.1756
0.7500   0.1774
0.7650   0.1816
0.7800   0.1884
0.7950   0.1979
0.8100   0.2109
0.8250   0.2275
0.8400   0.2473
0.8550   0.2705
0.8700   0.2969
0.8850   0.3263
0.9000   0.3580
0.9150   0.3910
0.9300   0.4238
0.9450   0.4544
0.9600   0.4805
0.9750   0.5001
0.9900   0.5113
/}\relax
\Blue{\relax  
\!\!
\plot
0.0000   0.2619
0.0150   0.2572
0.0300   0.2439
0.0450   0.2233
0.0600   0.1974
0.0750   0.1685
0.0900   0.1387
0.1050   0.1096
0.1200   0.0826
0.1350   0.0583
0.1500   0.0373
0.1650   0.0197
0.1800   0.0054
0.1950  -0.0058
0.2100  -0.0139
0.2250  -0.0191
0.2400  -0.0215
0.2550  -0.0210
0.2700  -0.0173
0.2850  -0.0102
0.3000   0.0008
0.3150   0.0165
0.3300   0.0376
0.3450   0.0653
0.3600   0.1006
0.3750   0.1445
0.3900   0.1977
0.4050   0.3552
0.4200   0.6811
0.4350   0.9682
0.4500   1.2110
0.4650   1.4018
0.4800   1.5316
0.4950   1.5925
0.5100   1.5804
0.5250   1.4961
0.5400   1.3453
0.5550   1.1364
0.5700   0.8782
0.5850   0.5778
0.6000   0.2399
0.6150   0.1827
0.6300   0.1345
0.6450   0.0951
0.6600   0.0635
0.6750   0.0388
0.6900   0.0199
0.7050   0.0058
0.7200  -0.0042
0.7350  -0.0108
0.7500  -0.0144
0.7650  -0.0152
0.7800  -0.0135
0.7950  -0.0092
0.8100  -0.0008
0.8250   0.0114
0.8400   0.0269
0.8550   0.0457
0.8700   0.0679
0.8850   0.0932
0.9000   0.1211
0.9150   0.1507
0.9300   0.1805
0.9450   0.2087
0.9600   0.2331
0.9750   0.2515
0.9900   0.2620
/}\relax
\endpicture
}
%
%
\vskip20pt
\setbox\figureutwo =\vbox{
\hbox to \hsize{\copy\figureone\hss\copy\figuretwo\hss\copy\figurethree}
\hbox to \hsize{\copy\figurefour\hss\copy\figurefive\hss\copy\figuresix}
} 
%
%
%
\setbox\figureone=\vbox{\hsize=\xfiglen
\beginpicture
\scriptsize
  \setcoordinatesystem units <\xfigdim,\yfigdim>  point at 0 0
  \setplotarea x from 0 to 1, y from -0.2 to 1.4
  \axis bottom shiftedto y=0 ticks short numbered from 0 to 1 by 0.2 /
  \axis left ticks short numbered from 0.2 to 1.4 by 0.4 /
 \put {\phantom{xx}} [l] at 1 1.1    
  \put {$p(x)$} [lt] at 0.025 1.4
\setquadratic
\setlinear
\setsolid
\Black{\relax  
\plot
  0.0000   0.1000
  0.0150   0.1000
  0.0300   0.1000
  0.0450   0.1000
  0.0600   0.1000
  0.0750   0.1000
  0.0900   0.1000
  0.1050   0.1000
  0.1200   0.1000
  0.1350   0.1000
  0.1500   0.1000
  0.1650   0.1000
  0.1800   0.1000
  0.1950   0.1000
  0.2100   0.1000
  0.2250   0.1000
  0.2400   0.1000
  0.2550   0.1000
  0.2700   0.1000
  0.2850   0.1000
  0.3000   0.1000
  0.3150   0.1000
  0.3300   0.1000
  0.3450   0.1000
  0.3600   0.1000
  0.3750   0.1000
  0.3900   0.1000
  0.4050   0.1975
  0.4200   0.4600
  0.4350   0.6775
  0.4500   0.8500
  0.4650   0.9775
  0.4800   1.0600
  0.4950   1.0975
  0.5100   1.0900
  0.5250   1.0375
  0.5400   0.9400
  0.5550   0.7975
  0.5700   0.6100
  0.5850   0.3775
  0.6000   0.1000
  0.6150   0.1000
  0.6300   0.1000
  0.6450   0.1000
  0.6600   0.1000
  0.6750   0.1000
  0.6900   0.1000
  0.7050   0.1000
  0.7200   0.1000
  0.7350   0.1000
  0.7500   0.1000
  0.7650   0.1000
  0.7800   0.1000
  0.7950   0.1000
  0.8100   0.1000
  0.8250   0.1000
  0.8400   0.1000
  0.8550   0.1000
  0.8700   0.1000
  0.8850   0.1000
  0.9000   0.1000
  0.9150   0.1000
  0.9300   0.1000
  0.9450   0.1000
  0.9600   0.1000
  0.9750   0.1000
  0.9900   0.1000
/}\relax
%
\Red{\relax   
\plot
  0.0000   0.1143
  0.0150   0.1142
  0.0300   0.1138
  0.0450   0.1133
  0.0600   0.1125
  0.0750   0.1117
  0.0900   0.1108
  0.1050   0.1099
  0.1200   0.1090
  0.1350   0.1082
  0.1500   0.1075
  0.1650   0.1068
  0.1800   0.1063
  0.1950   0.1058
  0.2100   0.1054
  0.2250   0.1050
  0.2400   0.1047
  0.2550   0.1045
  0.2700   0.1043
  0.2850   0.1042
  0.3000   0.1041
  0.3150   0.1041
  0.3300   0.1041
  0.3450   0.1041
  0.3600   0.1042
  0.3750   0.1043
  0.3900   0.1045
  0.4050   0.2022
  0.4200   0.4651
  0.4350   0.6829
  0.4500   0.8558
  0.4650   0.9836
  0.4800   1.0663
  0.4950   1.1039
  0.5100   1.0964
  0.5250   1.0437
  0.5400   0.9460
  0.5550   0.8031
  0.5700   0.6152
  0.5850   0.3823
  0.6000   0.1044
  0.6150   0.1041
  0.6300   0.1039
  0.6450   0.1037
  0.6600   0.1036
  0.6750   0.1035
  0.6900   0.1034
  0.7050   0.1034
  0.7200   0.1034
  0.7350   0.1035
  0.7500   0.1036
  0.7650   0.1038
  0.7800   0.1040
  0.7950   0.1042
  0.8100   0.1045
  0.8250   0.1049
  0.8400   0.1053
  0.8550   0.1058
  0.8700   0.1064
  0.8850   0.1070
  0.9000   0.1076
  0.9150   0.1083
  0.9300   0.1089
  0.9450   0.1096
  0.9600   0.1101
  0.9750   0.1105
  0.9900   0.1107
/}\relax
%
\Green{\relax   
\plot
  0.0000   0.1033
  0.0150   0.1033
  0.0300   0.1032
  0.0450   0.1030
  0.0600   0.1028
  0.0750   0.1026
  0.0900   0.1024
  0.1050   0.1021
  0.1200   0.1019
  0.1350   0.1017
  0.1500   0.1015
  0.1650   0.1014
  0.1800   0.1013
  0.1950   0.1011
  0.2100   0.1011
  0.2250   0.1010
  0.2400   0.1009
  0.2550   0.1009
  0.2700   0.1009
  0.2850   0.1008
  0.3000   0.1008
  0.3150   0.1009
  0.3300   0.1009
  0.3450   0.1010
  0.3600   0.1010
  0.3750   0.1011
  0.3900   0.1012
  0.4050   0.1989
  0.4200   0.4615
  0.4350   0.6792
  0.4500   0.8518
  0.4650   0.9795
  0.4800   1.0621
  0.4950   1.0996
  0.5100   1.0921
  0.5250   1.0396
  0.5400   0.9420
  0.5550   0.7993
  0.5700   0.6117
  0.5850   0.3791
  0.6000   0.1014
  0.6150   0.1013
  0.6300   0.1012
  0.6450   0.1011
  0.6600   0.1011
  0.6750   0.1010
  0.6900   0.1010
  0.7050   0.1010
  0.7200   0.1010
  0.7350   0.1010
  0.7500   0.1010
  0.7650   0.1011
  0.7800   0.1011
  0.7950   0.1012
  0.8100   0.1013
  0.8250   0.1014
  0.8400   0.1016
  0.8550   0.1018
  0.8700   0.1020
  0.8850   0.1022
  0.9000   0.1024
  0.9150   0.1027
  0.9300   0.1029
  0.9450   0.1031
  0.9600   0.1033
  0.9750   0.1035
  0.9900   0.1035
/}\relax
%
\Blue{\relax  
\plot
  0.0000   0.1003
  0.0150   0.1003
  0.0300   0.1002
  0.0450   0.1002
  0.0600   0.1002
  0.0750   0.1001
  0.0900   0.1001
  0.1050   0.1000
  0.1200   0.1000
  0.1350   0.1000
  0.1500   0.0999
  0.1650   0.0999
  0.1800   0.0999
  0.1950   0.0999
  0.2100   0.0999
  0.2250   0.0999
  0.2400   0.0999
  0.2550   0.0999
  0.2700   0.0999
  0.2850   0.0999
  0.3000   0.0999
  0.3150   0.0999
  0.3300   0.0999
  0.3450   0.0999
  0.3600   0.1000
  0.3750   0.1000
  0.3900   0.1001
  0.4050   0.1977
  0.4200   0.4602
  0.4350   0.6778
  0.4500   0.8504
  0.4650   0.9780
  0.4800   1.0605
  0.4950   1.0981
  0.5100   1.0906
  0.5250   1.0380
  0.5400   0.9405
  0.5550   0.7979
  0.5700   0.6103
  0.5850   0.3777
  0.6000   0.1001
  0.6150   0.1001
  0.6300   0.1000
  0.6450   0.1000
  0.6600   0.0999
  0.6750   0.0999
  0.6900   0.0999
  0.7050   0.0999
  0.7200   0.0999
  0.7350   0.0999
  0.7500   0.0999
  0.7650   0.0999
  0.7800   0.0999
  0.7950   0.0999
  0.8100   0.0999
  0.8250   0.0999
  0.8400   0.0999
  0.8550   0.0999
  0.8700   0.1000
  0.8850   0.1000
  0.9000   0.1000
  0.9150   0.1001
  0.9300   0.1001
  0.9450   0.1002
  0.9600   0.1002
  0.9750   0.1003
  0.9900   0.1003
/}\relax
\endpicture
}   
\setbox\figuretwo=\vbox{\hsize=\xfiglen 
\beginpicture
\scriptsize
  \setcoordinatesystem units <\xfigdim,\yfigdim>  point at 0 0
  \setplotarea x from 0 to 1, y from -0.2 to 1.4
  \axis bottom shiftedto y=0.0 ticks short numbered from 0 to 1 by 0.2 /
  \axis left ticks short numbered from -0.2 to 1.4 by 0.4 /
  \put {$p(x)$} [lt] at 0.025 1.4
\setlinear
\setsolid
\Black{\relax  
\plot
0.0000   0.1000
0.0150   0.1000
0.0300   0.1000
0.0450   0.1000
0.0600   0.1000
0.0750   0.1000
0.0900   0.1000
0.1050   0.1000
0.1200   0.1000
0.1350   0.1000
0.1500   0.1000
0.1650   0.1000
0.1800   0.1000
0.1950   0.1000
0.2100   0.1000
0.2250   0.1000
0.2400   0.1000
0.2550   0.1000
0.2700   0.1000
0.2850   0.1000
0.3000   0.1000
0.3150   0.1000
0.3300   0.1000
0.3450   0.1000
0.3600   0.1000
0.3750   0.1000
0.3900   0.1000
0.4050   0.1975
0.4200   0.4600
0.4350   0.6775
0.4500   0.8500
0.4650   0.9775
0.4800   1.0600
0.4950   1.0975
0.5100   1.0900
0.5250   1.0375
0.5400   0.9400
0.5550   0.7975
0.5700   0.6100
0.5850   0.3775
0.6000   0.1000
0.6150   0.1000
0.6300   0.1000
0.6450   0.1000
0.6600   0.1000
0.6750   0.1000
0.6900   0.1000
0.7050   0.1000
0.7200   0.1000
0.7350   0.1000
0.7500   0.1000
0.7650   0.1000
0.7800   0.1000
0.7950   0.1000
0.8100   0.1000
0.8250   0.1000
0.8400   0.1000
0.8550   0.1000
0.8700   0.1000
0.8850   0.1000
0.9000   0.1000
0.9150   0.1000
0.9300   0.1000
0.9450   0.1000
0.9600   0.1000
0.9750   0.1000
0.9900   0.1000
/}\relax
\setsolid
%
\Red{\relax   
\plot
0.0000   0.0586
0.0150   0.0584
0.0300   0.0577
0.0450   0.0567
0.0600   0.0556
0.0750   0.0545
0.0900   0.0537
0.1050   0.0530
0.1200   0.0527
0.1350   0.0525
0.1500   0.0526
0.1650   0.0528
0.1800   0.0531
0.1950   0.0534
0.2100   0.0536
0.2250   0.0538
0.2400   0.0539
0.2550   0.0539
0.2700   0.0538
0.2850   0.0537
0.3000   0.0536
0.3150   0.0535
0.3300   0.0536
0.3450   0.0541
0.3600   0.0550
0.3750   0.0567
0.3900   0.0592
0.4050   0.1584
0.4200   0.4204
0.4350   0.6400
0.4500   0.8173
0.4650   0.9519
0.4800   1.0433
0.4950   1.0904
0.5100   1.0921
0.5250   1.0476
0.5400   0.9566
0.5550   0.8191
0.5700   0.6355
0.5850   0.4064
0.6000   0.1323
0.6150   0.1306
0.6300   0.1285
0.6450   0.1263
0.6600   0.1243
0.6750   0.1224
0.6900   0.1208
0.7050   0.1195
0.7200   0.1185
0.7350   0.1178
0.7500   0.1175
0.7650   0.1177
0.7800   0.1183
0.7950   0.1196
0.8100   0.1222
0.8250   0.1262
0.8400   0.1312
0.8550   0.1374
0.8700   0.1448
0.8850   0.1535
0.9000   0.1632
0.9150   0.1737
0.9300   0.1845
0.9450   0.1949
0.9600   0.2039
0.9750   0.2108
0.9900   0.2148
/}\relax
%
\Green{\relax   
\plot
0.0000   0.1222
0.0150   0.1214
0.0300   0.1190
0.0450   0.1154
0.0600   0.1109
0.0750   0.1061
0.0900   0.1014
0.1050   0.0970
0.1200   0.0931
0.1350   0.0898
0.1500   0.0871
0.1650   0.0850
0.1800   0.0834
0.1950   0.0823
0.2100   0.0816
0.2250   0.0812
0.2400   0.0811
0.2550   0.0813
0.2700   0.0820
0.2850   0.0830
0.3000   0.0845
0.3150   0.0868
0.3300   0.0898
0.3450   0.0939
0.3600   0.0993
0.3750   0.1063
0.3900   0.1151
0.4050   0.2214
0.4200   0.4911
0.4350   0.7181
0.4500   0.9013
0.4650   1.0394
0.4800   1.1304
0.4950   1.1729
0.5100   1.1658
0.5250   1.1092
0.5400   1.0041
0.5550   0.8523
0.5700   0.6554
0.5850   0.4147
0.6000   0.1312
0.6150   0.1223
0.6300   0.1149
0.6450   0.1089
0.6600   0.1043
0.6750   0.1006
0.6900   0.0979
0.7050   0.0958
0.7200   0.0943
0.7350   0.0932
0.7500   0.0925
0.7650   0.0922
0.7800   0.0921
0.7950   0.0924
0.8100   0.0937
0.8250   0.0958
0.8400   0.0986
0.8550   0.1021
0.8700   0.1064
0.8850   0.1114
0.9000   0.1172
0.9150   0.1235
0.9300   0.1301
0.9450   0.1365
0.9600   0.1421
0.9750   0.1465
0.9900   0.1490
/}\relax
%
\Blue{\relax  
\plot
0.0000   0.1185
0.0150   0.1177
0.0300   0.1154
0.0450   0.1119
0.0600   0.1076
0.0750   0.1030
0.0900   0.0984
0.1050   0.0941
0.1200   0.0904
0.1350   0.0872
0.1500   0.0846
0.1650   0.0826
0.1800   0.0810
0.1950   0.0799
0.2100   0.0791
0.2250   0.0786
0.2400   0.0785
0.2550   0.0785
0.2700   0.0789
0.2850   0.0797
0.3000   0.0808
0.3150   0.0825
0.3300   0.0850
0.3450   0.0883
0.3600   0.0927
0.3750   0.0986
0.3900   0.1059
0.4050   0.2105
0.4200   0.4783
0.4350   0.7031
0.4500   0.8840
0.4650   1.0198
0.4800   1.1087
0.4950   1.1495
0.5100   1.1413
0.5250   1.0843
0.5400   0.9795
0.5550   0.8285
0.5700   0.6328
0.5850   0.3936
0.6000   0.1116
0.6150   0.1041
0.6300   0.0981
0.6450   0.0934
0.6600   0.0898
0.6750   0.0870
0.6900   0.0850
0.7050   0.0834
0.7200   0.0822
0.7350   0.0813
0.7500   0.0806
0.7650   0.0801
0.7800   0.0797
0.7950   0.0795
0.8100   0.0801
0.8250   0.0814
0.8400   0.0830
0.8550   0.0852
0.8700   0.0880
0.8850   0.0913
0.9000   0.0952
0.9150   0.0995
0.9300   0.1041
0.9450   0.1086
0.9600   0.1126
0.9750   0.1157
/}\relax
\endpicture
}   
\setbox\figurethree=\vbox{\hsize=\xfiglen 
\beginpicture
\scriptsize
  \setcoordinatesystem units <\xfigdim,\yfigdim>  point at 0 0
  \setplotarea x from 0 to 1, y from -0.2 to 1.4
  \axis bottom shiftedto y=0.0 ticks short numbered from 0 to 1 by 0.2 /
  \axis left ticks short numbered from -0.2 to 1.4 by 0.4 /
  \put {$p(x)$} [lt] at 0.025 1.4
\setlinear
\setsolid 
\Black{\relax  
\plot
0.0000   0.1000
0.0150   0.1000
0.0300   0.1000
0.0450   0.1000
0.0600   0.1000
0.0750   0.1000
0.0900   0.1000
0.1050   0.1000
0.1200   0.1000
0.1350   0.1000
0.1500   0.1000
0.1650   0.1000
0.1800   0.1000
0.1950   0.1000
0.2100   0.1000
0.2250   0.1000
0.2400   0.1000
0.2550   0.1000
0.2700   0.1000
0.2850   0.1000
0.3000   0.1000
0.3150   0.1000
0.3300   0.1000
0.3450   0.1000
0.3600   0.1000
0.3750   0.1000
0.3900   0.1000
0.4050   0.1975
0.4200   0.4600
0.4350   0.6775
0.4500   0.8500
0.4650   0.9775
0.4800   1.0600
0.4950   1.0975
0.5100   1.0900
0.5250   1.0375
0.5400   0.9400
0.5550   0.7975
0.5700   0.6100
0.5850   0.3775
0.6000   0.1000
0.6150   0.1000
0.6300   0.1000
0.6450   0.1000
0.6600   0.1000
0.6750   0.1000
0.6900   0.1000
0.7050   0.1000
0.7200   0.1000
0.7350   0.1000
0.7500   0.1000
0.7650   0.1000
0.7800   0.1000
0.7950   0.1000
0.8100   0.1000
0.8250   0.1000
0.8400   0.1000
0.8550   0.1000
0.8700   0.1000
0.8850   0.1000
0.9000   0.1000
0.9150   0.1000
0.9300   0.1000
0.9450   0.1000
0.9600   0.1000
0.9750   0.1000
0.9900   0.1000
/}
%
\Red{\relax   
\!\!\!\!
\plot
0.0000  -0.0597
0.0150  -0.0586
0.0300  -0.0553
0.0450  -0.0502
0.0600  -0.0438
0.0750  -0.0364
0.0900  -0.0288
0.1050  -0.0212
0.1200  -0.0140
0.1350  -0.0074
0.1500  -0.0017
0.1650   0.0030
0.1800   0.0069
0.1950   0.0097
0.2100   0.0115
0.2250   0.0124
0.2400   0.0121
0.2550   0.0108
0.2700   0.0084
0.2850   0.0046
0.3000  -0.0004
0.3150  -0.0070
0.3300  -0.0153
0.3450  -0.0255
0.3600  -0.0377
0.3750  -0.0521
0.3900  -0.0685
0.4050   0.0097
0.4200   0.2504
0.4350   0.4482
0.4500   0.6064
0.4650   0.7282
0.4800   0.8165
0.4950   0.8727
0.5100   0.8957
0.5250   0.8825
0.5400   0.8285
0.5550   0.7294
0.5700   0.5817
0.5850   0.3834
0.6000   0.1339
0.6150   0.1528
0.6300   0.1662
0.6450   0.1752
0.6600   0.1810
0.6750   0.1847
0.6900   0.1869
0.7050   0.1884
0.7200   0.1897
0.7350   0.1910
0.7500   0.1928
0.7650   0.1952
0.7800   0.1987
0.7950   0.2035
0.8100   0.2103
0.8250   0.2191
0.8400   0.2301
0.8550   0.2432
0.8700   0.2586
0.8850   0.2761
0.9000   0.2953
0.9150   0.3157
0.9300   0.3363
0.9450   0.3557
0.9600   0.3726
0.9750   0.3854
0.9900   0.3927
/}\relax
\Green{\relax   
\plot
0.0000  -0.0051
0.0150  -0.0043
0.0300  -0.0021
0.0450   0.0015
0.0600   0.0060
0.0750   0.0111
0.0900   0.0165
0.1050   0.0219
0.1200   0.0272
0.1350   0.0320
0.1500   0.0364
0.1650   0.0403
0.1800   0.0436
0.1950   0.0464
0.2100   0.0488
0.2250   0.0507
0.2400   0.0522
0.2550   0.0533
0.2700   0.0541
0.2850   0.0548
0.3000   0.0552
0.3150   0.0557
0.3300   0.0562
0.3450   0.0571
0.3600   0.0584
0.3750   0.0607
0.3900   0.0642
0.4050   0.1657
0.4200   0.4325
0.4350   0.6569
0.4500   0.8387
0.4650   0.9774
0.4800   1.0720
0.4950   1.1211
0.5100   1.1235
0.5250   1.0783
0.5400   0.9853
0.5550   0.8448
0.5700   0.6576
0.5850   0.4244
0.6000   0.1460
0.6150   0.1425
0.6300   0.1391
0.6450   0.1360
0.6600   0.1334
0.6750   0.1313
0.6900   0.1298
0.7050   0.1287
0.7200   0.1281
0.7350   0.1281
0.7500   0.1285
0.7650   0.1295
0.7800   0.1311
0.7950   0.1334
0.8100   0.1366
0.8250   0.1409
0.8400   0.1461
0.8550   0.1524
0.8700   0.1597
0.8850   0.1681
0.9000   0.1773
0.9150   0.1872
0.9300   0.1972
0.9450   0.2067
0.9600   0.2150
0.9750   0.2212
0.9900   0.2248
/}\relax
\setlinear
\Blue{\relax  
\plot
0.0000   0.0979
0.0150   0.0976
0.0300   0.0968
0.0450   0.0955
0.0600   0.0940
0.0750   0.0924
0.0900   0.0908
0.1050   0.0894
0.1200   0.0882
0.1350   0.0872
0.1500   0.0865
0.1650   0.0859
0.1800   0.0855
0.1950   0.0852
0.2100   0.0850
0.2250   0.0849
0.2400   0.0848
0.2550   0.0849
0.2700   0.0850
0.2850   0.0852
0.3000   0.0855
0.3150   0.0861
0.3300   0.0869
0.3450   0.0881
0.3600   0.0898
0.3750   0.0920
0.3900   0.0950
0.4050   0.1952
0.4200   0.4595
0.4350   0.6799
0.4500   0.8559
0.4650   0.9869
0.4800   1.0722
0.4950   1.1112
0.5100   1.1034
0.5250   1.0489
0.5400   0.9482
0.5550   0.8021
0.5700   0.6113
0.5850   0.3763
0.6000   0.0974
0.6150   0.0944
0.6300   0.0921
0.6450   0.0903
0.6600   0.0890
0.6750   0.0881
0.6900   0.0874
0.7050   0.0869
0.7200   0.0865
0.7350   0.0862
0.7500   0.0860
0.7650   0.0857
0.7800   0.0855
0.7950   0.0853
0.8100   0.0854
0.8250   0.0858
0.8400   0.0863
0.8550   0.0870
0.8700   0.0878
0.8850   0.0889
0.9000   0.0903
0.9150   0.0918
0.9300   0.0934
0.9450   0.0950
0.9600   0.0965
0.9750   0.0976
0.9900   0.0983
/}\relax
\endpicture
}
%
%
%
\setbox\figurefour=\vbox{\hsize=\xfiglen
\beginpicture
\scriptsize
  \setcoordinatesystem units <\xfigdim,0.17\yfigdim>  point at 0 0
  \setplotarea x from 0 to 1, y from -1 to 7
  \axis bottom shiftedto y=0 ticks short numbered from 0 to 1 by 0.2 /
  \axis left ticks short unlabeled from -1 to 7 by 1 /
  \axis left ticks short numbered from 0 to 7 by 2 /
  \put {$q(x)$} [lt] at 0.025 7
\setlinear
\setsolid 
\Black{\relax  
\plot
0.0000   0.2716
0.0150   0.2541
0.0300   0.2378
0.0450   0.2229
0.0600   0.2093
0.0750   0.1972
0.0900   0.1864
0.1050   0.1771
0.1200   0.1691
0.1350   0.1626
0.1500   0.1574
0.1650   0.1536
0.1800   0.1512
0.1950   0.1501
0.2100   0.1503
0.2250   0.1519
0.2400   0.1547
0.2550   0.1590
0.2700   0.1646
0.2850   0.1716
0.3000   0.1800
0.3150   0.1898
0.3300   0.2010
0.3450   0.2137
0.3600   0.2277
0.3750   0.2431
0.3900   0.2598
0.4050   0.2776
0.4200   0.2967
0.4350   0.3167
0.4500   0.3375
0.4650   0.3590
0.4800   0.3810
0.4950   0.4032
0.5100   0.4254
0.5250   0.4474
0.5400   0.4688
0.5550   0.4895
0.5700   0.5091
0.5850   0.5274
0.6000   0.5441
0.6150   2.1100
0.6300   3.4879
0.6450   4.6446
0.6600   5.5518
0.6750   6.1867
0.6900   6.5335
0.7050   6.5834
0.7200   6.3349
0.7350   5.7946
0.7500   4.9760
0.7650   3.8996
0.7800   2.5919
0.7950   1.0848
0.8100   0.5331
0.8250   0.5153
0.8400   0.4962
0.8550   0.4758
0.8700   0.4546
0.8850   0.4328
0.9000   0.4106
0.9150   0.3884
0.9300   0.3663
0.9450   0.3446
0.9600   0.3235
0.9750   0.3032
0.9900   0.2839
/}
\setsolid
\Red{\relax   
\!\!\!\!
\plot
0.0000  -0.9174
0.0150  -0.9326
0.0300  -0.9431
0.0450  -0.9491
0.0600  -0.9514
0.0750  -0.9509
0.0900  -0.9487
0.1050  -0.9457
0.1200  -0.9427
0.1350  -0.9405
0.1500  -0.9397
0.1650  -0.9407
0.1800  -0.9440
0.1950  -0.9498
0.2100  -0.9586
0.2250  -0.9707
0.2400  -0.9865
0.2550  -1.0064
0.2700  -1.0309
0.2850  -1.0607
0.3000  -1.0964
0.3150  -1.1388
0.3300  -1.1887
0.3450  -1.2469
0.3600  -1.3139
0.3750  -1.3899
0.3900  -1.4743
0.4050  -1.5749
0.4200  -1.6889
0.4350  -1.7777
0.4500  -1.8220
0.4650  -1.8025
0.4800  -1.7037
0.4950  -1.5192
0.5100  -1.2555
0.5250  -0.9312
0.5400  -0.5724
0.5550  -0.2057
0.5700   0.1474
0.5850   0.4725
0.6000   0.7616
0.6150   2.5337
0.6300   4.0762
0.6450   5.3618
0.6600   6.3681
0.6750   7.0781
0.6900   7.4811
0.7050   7.5724
0.7200   7.3548
0.7350   6.8379
0.7500   6.0386
0.7650   4.9803
0.7800   3.6926
0.7950   2.2101
0.8100   1.6944
0.8250   1.7234
0.8400   1.7599
0.8550   1.8032
0.8700   1.8527
0.8850   1.9070
0.9000   1.9644
0.9150   2.0221
0.9300   2.0771
0.9450   2.1254
0.9600   2.1628
0.9750   2.1856
0.9900   2.1905
/}\relax
\Green{\relax   
\plot
0.0000  -0.5738
0.0150  -0.5891
0.0300  -0.5999
0.0450  -0.6062
0.0600  -0.6085
0.0750  -0.6075
0.0900  -0.6038
0.1050  -0.5980
0.1200  -0.5907
0.1350  -0.5822
0.1500  -0.5728
0.1650  -0.5627
0.1800  -0.5519
0.1950  -0.5404
0.2100  -0.5280
0.2250  -0.5145
0.2400  -0.4998
0.2550  -0.4834
0.2700  -0.4648
0.2850  -0.4436
0.3000  -0.4192
0.3150  -0.3906
0.3300  -0.3568
0.3450  -0.3167
0.3600  -0.2688
0.3750  -0.2115
0.3900  -0.1428
0.4050  -0.0709
0.4200  -0.0001
0.4350   0.0905
0.4500   0.1976
0.4650   0.3162
0.4800   0.4389
0.4950   0.5572
0.5100   0.6631
0.5250   0.7513
0.5400   0.8203
0.5550   0.8723
0.5700   0.9118
0.5850   0.9444
0.6000   0.9749
0.6150   2.5291
0.6300   3.8938
0.6450   5.0379
0.6600   5.9340
0.6750   6.5604
0.6900   6.9013
0.7050   6.9483
0.7200   6.7002
0.7350   6.1635
0.7500   5.3520
0.7650   4.2863
0.7800   2.9933
0.7950   1.5048
0.8100   0.9795
0.8250   0.9944
0.8400   1.0124
0.8550   1.0333
0.8700   1.0570
0.8850   1.0827
0.9000   1.1096
0.9150   1.1362
0.9300   1.1607
0.9450   1.1811
0.9600   1.1951
0.9750   1.2005
0.9900   1.1956
/}\relax
\Blue{\relax  
\!\!
\plot
0.0000   0.2425
0.0150   0.2234
0.0300   0.2013
0.0450   0.1772
0.0600   0.1518
0.0750   0.1259
0.0900   0.1003
0.1050   0.0756
0.1200   0.0526
0.1350   0.0317
0.1500   0.0132
0.1650  -0.0025
0.1800  -0.0154
0.1950  -0.0252
0.2100  -0.0319
0.2250  -0.0352
0.2400  -0.0352
0.2550  -0.0316
0.2700  -0.0243
0.2850  -0.0130
0.3000   0.0025
0.3150   0.0226
0.3300   0.0477
0.3450   0.0782
0.3600   0.1144
0.3750   0.1568
0.3900   0.2055
0.4050   0.2504
0.4200   0.2854
0.4350   0.3296
0.4500   0.3787
0.4650   0.4277
0.4800   0.4711
0.4950   0.5040
0.5100   0.5240
0.5250   0.5313
0.5400   0.5288
0.5550   0.5211
0.5700   0.5133
0.5850   0.5099
0.6000   0.5140
0.6150   2.0497
0.6300   3.4016
0.6450   4.5365
0.6600   5.4258
0.6750   6.0463
0.6900   6.3817
0.7050   6.4226
0.7200   6.1674
0.7350   5.6222
0.7500   4.8004
0.7650   3.7225
0.7800   2.4148
0.7950   0.9090
0.8100   0.3635
0.8250   0.3553
0.8400   0.3473
0.8550   0.3397
0.8700   0.3324
0.8850   0.3255
0.9000   0.3188
0.9150   0.3119
0.9300   0.3046
0.9450   0.2963
0.9600   0.2863
0.9750   0.2742
0.9900   0.2594
/}\relax
\endpicture
}   
\setbox\figurefive=\vbox{\hsize=\xfiglen
\beginpicture
\scriptsize
  \setcoordinatesystem units <\xfigdim,0.17\yfigdim>  point at 0 0
  \setplotarea x from 0 to 1, y from -1 to 7
  \axis bottom shiftedto y=0 ticks short numbered from 0 to 1 by 0.2 /
  \axis left ticks short unlabeled from -1 to 7.0 by 1 /
  \axis left ticks short numbered from 0 to 6 by 2 /
  \put {$q(x)$} [lt] at 0.025 7
\setlinear
\setsolid 
\Black{\relax  
\plot
0.0000   0.2716
0.0150   0.2541
0.0300   0.2378
0.0450   0.2229
0.0600   0.2093
0.0750   0.1972
0.0900   0.1864
0.1050   0.1771
0.1200   0.1691
0.1350   0.1626
0.1500   0.1574
0.1650   0.1536
0.1800   0.1512
0.1950   0.1501
0.2100   0.1503
0.2250   0.1519
0.2400   0.1547
0.2550   0.1590
0.2700   0.1646
0.2850   0.1716
0.3000   0.1800
0.3150   0.1898
0.3300   0.2010
0.3450   0.2137
0.3600   0.2277
0.3750   0.2431
0.3900   0.2598
0.4050   0.2776
0.4200   0.2967
0.4350   0.3167
0.4500   0.3375
0.4650   0.3590
0.4800   0.3810
0.4950   0.4032
0.5100   0.4254
0.5250   0.4474
0.5400   0.4688
0.5550   0.4895
0.5700   0.5091
0.5850   0.5274
0.6000   0.5441
0.6150   2.1100
0.6300   3.4879
0.6450   4.6446
0.6600   5.5518
0.6750   6.1867
0.6900   6.5335
0.7050   6.5834
0.7200   6.3349
0.7350   5.7946
0.7500   4.9760
0.7650   3.8996
0.7800   2.5919
0.7950   1.0848
0.8100   0.5331
0.8250   0.5153
0.8400   0.4962
0.8550   0.4758
0.8700   0.4546
0.8850   0.4328
0.9000   0.4106
0.9150   0.3884
0.9300   0.3663
0.9450   0.3446
0.9600   0.3235
0.9750   0.3032
0.9900   0.2839
/}
\setsolid
\relax
\Red{\relax   
\!\!\!\!
\plot
0.0000  -0.0724
0.0150  -0.0927
0.0300  -0.1181
0.0450  -0.1475
0.0600  -0.1797
0.0750  -0.2136
0.0900  -0.2480
0.1050  -0.2818
0.1200  -0.3141
0.1350  -0.3441
0.1500  -0.3713
0.1650  -0.3952
0.1800  -0.4156
0.1950  -0.4322
0.2100  -0.4449
0.2250  -0.4535
0.2400  -0.4579
0.2550  -0.4578
0.2700  -0.4531
0.2850  -0.4435
0.3000  -0.4286
0.3150  -0.4080
0.3300  -0.3813
0.3450  -0.3479
0.3600  -0.3071
0.3750  -0.2586
0.3900  -0.2022
0.4050  -0.1569
0.4200  -0.1328
0.4350  -0.0871
0.4500  -0.0215
0.4650   0.0605
0.4800   0.1532
0.4950   0.2496
0.5100   0.3427
0.5250   0.4276
0.5400   0.5022
0.5550   0.5677
0.5700   0.6274
0.5850   0.6858
0.6000   0.7470
0.6150   2.3093
0.6300   3.6789
0.6450   4.8245
0.6600   5.7192
0.6750   6.3414
0.6900   6.6756
0.7050   6.7137
0.7200   6.4549
0.7350   5.9061
0.7500   5.0814
0.7650   4.0021
0.7800   2.6956
0.7950   1.1945
0.8100   0.6623
0.8250   0.6745
0.8400   0.6920
0.8550   0.7143
0.8700   0.7408
0.8850   0.7707
0.9000   0.8027
0.9150   0.8351
0.9300   0.8655
0.9450   0.8916
0.9600   0.9104
0.9750   0.9195
0.9900   0.9166
/}\relax
\relax
\Green{\relax   
\plot
0.0000   0.3797
0.0150   0.3577
0.0300   0.3274
0.0450   0.2905
0.0600   0.2484
0.0750   0.2033
0.0900   0.1570
0.1050   0.1112
0.1200   0.0676
0.1350   0.0272
0.1500  -0.0089
0.1650  -0.0400
0.1800  -0.0658
0.1950  -0.0856
0.2100  -0.0994
0.2250  -0.1066
0.2400  -0.1071
0.2550  -0.1005
0.2700  -0.0862
0.2850  -0.0639
0.3000  -0.0328
0.3150   0.0079
0.3300   0.0589
0.3450   0.1213
0.3600   0.1960
0.3750   0.2836
0.3900   0.3844
0.4050   0.4786
0.4200   0.5537
0.4350   0.6461
0.4500   0.7471
0.4650   0.8455
0.4800   0.9292
0.4950   0.9876
0.5100   1.0146
0.5250   1.0104
0.5400   0.9816
0.5550   0.9382
0.5700   0.8918
0.5850   0.8525
0.6000   0.8280
0.6150   2.3188
0.6300   3.6302
0.6450   4.7299
0.6600   5.5892
0.6750   6.1846
0.6900   6.4995
0.7050   6.5241
0.7200   6.2565
0.7350   5.7025
0.7500   4.8753
0.7650   3.7952
0.7800   2.4885
0.7950   0.9868
0.8100   0.4526
0.8250   0.4609
0.8400   0.4728
0.8550   0.4881
0.8700   0.5063
0.8850   0.5268
0.9000   0.5487
0.9150   0.5706
0.9300   0.5909
0.9450   0.6075
0.9600   0.6184
0.9750   0.6215
0.9900   0.6153
/}\relax
\Blue{\relax  
\plot
0.0000   0.3912
0.0150   0.3691
0.0300   0.3386
0.0450   0.3012
0.0600   0.2586
0.0750   0.2127
0.0900   0.1655
0.1050   0.1187
0.1200   0.0739
0.1350   0.0322
0.1500  -0.0053
0.1650  -0.0380
0.1800  -0.0654
0.1950  -0.0871
0.2100  -0.1029
0.2250  -0.1124
0.2400  -0.1154
0.2550  -0.1116
0.2700  -0.1007
0.2850  -0.0821
0.3000  -0.0553
0.3150  -0.0198
0.3300   0.0253
0.3450   0.0808
0.3600   0.1473
0.3750   0.2254
0.3900   0.3152
0.4050   0.3970
0.4200   0.4581
0.4350   0.5353
0.4500   0.6201
0.4650   0.7023
0.4800   0.7707
0.4950   0.8161
0.5100   0.8333
0.5250   0.8231
0.5400   0.7919
0.5550   0.7493
0.5700   0.7059
0.5850   0.6711
0.6000   0.6520
0.6150   2.1485
0.6300   3.4656
0.6450   4.5705
0.6600   5.4344
0.6750   6.0337
0.6900   6.3516
0.7050   6.3785
0.7200   6.1125
0.7350   5.5592
0.7500   4.7319
0.7650   3.6509
0.7800   2.3426
0.7950   0.8384
0.8100   0.3008
0.8250   0.3049
0.8400   0.3118
0.8550   0.3214
0.8700   0.3332
0.8850   0.3469
0.9000   0.3617
0.9150   0.3764
0.9300   0.3897
0.9450   0.4001
0.9600   0.4057
0.9750   0.4050
0.9900   0.3966
/}\relax
\endpicture
}
\setbox\figuresix=\vbox{\hsize=\xfiglen 
\beginpicture
\scriptsize
  \setcoordinatesystem units <\xfigdim,0.17\yfigdim>  point at 0 0
  \setplotarea x from 0 to 1, y from -1 to 7
  \axis bottom shiftedto y=0 ticks short numbered from 0 to 1 by 0.2 /
  \axis left ticks short unlabeled from -1 to 7.0 by 1 /
  \axis left ticks short numbered from 0 to 6 by 2 /
  \put {$q(x)$} [lt] at 0.025 7
\setlinear
\setsolid
\Black{\relax  
\plot
0.0000   0.2716
0.0150   0.2541
0.0300   0.2378
0.0450   0.2229
0.0600   0.2093
0.0750   0.1972
0.0900   0.1864
0.1050   0.1771
0.1200   0.1691
0.1350   0.1626
0.1500   0.1574
0.1650   0.1536
0.1800   0.1512
0.1950   0.1501
0.2100   0.1503
0.2250   0.1519
0.2400   0.1547
0.2550   0.1590
0.2700   0.1646
0.2850   0.1716
0.3000   0.1800
0.3150   0.1898
0.3300   0.2010
0.3450   0.2137
0.3600   0.2277
0.3750   0.2431
0.3900   0.2598
0.4050   0.2776
0.4200   0.2967
0.4350   0.3167
0.4500   0.3375
0.4650   0.3590
0.4800   0.3810
0.4950   0.4032
0.5100   0.4254
0.5250   0.4474
0.5400   0.4688
0.5550   0.4895
0.5700   0.5091
0.5850   0.5274
0.6000   0.5441
0.6150   2.1100
0.6300   3.4879
0.6450   4.6446
0.6600   5.5518
0.6750   6.1867
0.7050   6.5834
0.7200   6.3349
0.7350   5.7946
0.7500   4.9760
0.7650   3.8996
0.7800   2.5919
0.7950   1.0848
0.8100   0.5331
0.8250   0.5153
0.8400   0.4962
0.8550   0.4758
0.8700   0.4546
0.8850   0.4328
0.9000   0.4106
0.9150   0.3884
0.9300   0.3663
0.9450   0.3446
0.9600   0.3235
0.9750   0.3032
0.9900   0.2839
/}
\setsolid
%
\Red{\relax   
\!\!\!\!
\plot
0.0000  -0.0597
0.0150  -0.0586
0.0300  -0.0553
0.0450  -0.0502
0.0600  -0.0438
0.0750  -0.0364
0.0900  -0.0288
0.1050  -0.0212
0.1200  -0.0140
0.1350  -0.0074
0.1500  -0.0017
0.1650   0.0030
0.1800   0.0069
0.1950   0.0097
0.2100   0.0115
0.2250   0.0124
0.2400   0.0121
0.2550   0.0108
0.2700   0.0084
0.2850   0.0046
0.3000  -0.0004
0.3150  -0.0070
0.3300  -0.0153
0.3450  -0.0255
0.3600  -0.0377
0.3750  -0.0521
0.3900  -0.0685
0.4050   0.0097
0.4200   0.2504
0.4350   0.4482
0.4500   0.6064
0.4650   0.7282
0.4800   0.8165
0.4950   0.8727
0.5100   0.8957
0.5250   0.8825
0.5400   0.8285
0.5550   0.7294
0.5700   0.5817
0.5850   0.3834
0.6000   0.1339
0.6150   0.1528
0.6300   0.1662
0.6450   0.1752
0.6600   0.1810
0.6750   0.1847
0.6900   0.1869
0.7050   0.1884
0.7200   0.1897
0.7350   0.1910
0.7500   0.1928
0.7650   0.1952
0.7800   0.1987
0.7950   0.2035
0.8100   0.2103
0.8250   0.2191
0.8400   0.2301
0.8550   0.2432
0.8700   0.2586
0.8850   0.2761
0.9000   0.2953
0.9150   0.3157
0.9300   0.3363
0.9450   0.3557
0.9600   0.3726
0.9750   0.3854
0.9900   0.3927
/}\relax
\Green{\relax   
\plot
0.0000  -0.5738
0.0150  -0.5891
0.0300  -0.5999
0.0450  -0.6062
0.0600  -0.6085
0.0750  -0.6075
0.0900  -0.6038
0.1050  -0.5980
0.1200  -0.5907
0.1350  -0.5822
0.1500  -0.5728
0.1650  -0.5627
0.1800  -0.5519
0.1950  -0.5404
0.2100  -0.5280
0.2250  -0.5145
0.2400  -0.4998
0.2550  -0.4834
0.2700  -0.4648
0.2850  -0.4436
0.3000  -0.4192
0.3150  -0.3906
0.3300  -0.3568
0.3450  -0.3167
0.3600  -0.2688
0.3750  -0.2115
0.3900  -0.1428
0.4050  -0.0709
0.4200  -0.0001
0.4350   0.0905
0.4500   0.1976
0.4650   0.3162
0.4800   0.4389
0.4950   0.5572
0.5100   0.6631
0.5250   0.7513
0.5400   0.8203
0.5550   0.8723
0.5700   0.9118
0.5850   0.9444
0.6000   0.9749
0.6150   2.5291
0.6300   3.8938
0.6450   5.0379
0.6600   5.9340
0.6750   6.5604
0.6900   6.9013
0.7050   6.9483
0.7200   6.7002
0.7350   6.1635
0.7500   5.3520
0.7650   4.2863
0.7800   2.9933
0.7950   1.5048
0.8100   0.9795
0.8250   0.9944
0.8400   1.0124
0.8550   1.0333
0.8700   1.0570
0.8850   1.0827
0.9000   1.1096
0.9150   1.1362
0.9300   1.1607
0.9450   1.1811
0.9600   1.1951
0.9750   1.2005
0.9900   1.1956
/}\relax
\Blue{\relax  
\plot
0.0000   0.2425
0.0150   0.2234
0.0300   0.2013
0.0450   0.1772
0.0600   0.1518
0.0750   0.1259
0.0900   0.1003
0.1050   0.0756
0.1200   0.0526
0.1350   0.0317
0.1500   0.0132
0.1650  -0.0025
0.1800  -0.0154
0.1950  -0.0252
0.2100  -0.0319
0.2250  -0.0352
0.2400  -0.0352
0.2550  -0.0316
0.2700  -0.0243
0.2850  -0.0130
0.3000   0.0025
0.3150   0.0226
0.3300   0.0477
0.3450   0.0782
0.3600   0.1144
0.3750   0.1568
0.3900   0.2055
0.4050   0.2504
0.4200   0.2854
0.4350   0.3296
0.4500   0.3787
0.4650   0.4277
0.4800   0.4711
0.4950   0.5040
0.5100   0.5240
0.5250   0.5313
0.5400   0.5288
0.5550   0.5211
0.5700   0.5133
0.5850   0.5099
0.6000   0.5140
0.6150   2.0497
0.6300   3.4016
0.6450   4.5365
0.6600   5.4258
0.6750   6.0463
0.7050   6.4226
0.7200   6.1674
0.7350   5.6222
0.7500   4.8004
0.7650   3.7225
0.7800   2.4148
0.7950   0.9090
0.8100   0.3635
0.8250   0.3553
0.8400   0.3473
0.8550   0.3397
0.8700   0.3324
0.8850   0.3255
0.9000   0.3188
0.9150   0.3119
0.9300   0.3046
0.9450   0.2963
0.9600   0.2863
0.9750   0.2742
0.9900   0.2594
/}\relax
\endpicture
}
%
\setbox\figureuthree =\vbox{
\hbox to \hsize{\copy\figureone\hss\copy\figuretwo\hss\copy\figurethree}
\vskip20pt
\hbox to \hsize{\copy\figurefour\phantom{2}\hss\copy\figurefive\hss\copy\figuresix}
}
\setbox\figureone=\vbox{\hsize=\xfiglen
\beginpicture
\scriptsize
  \setcoordinatesystem units <\xfigdim,\yfigdim>  point at 0 0
  \setplotarea x from 0 to 1, y from -0.2 to 1.4
  \axis bottom shiftedto y=0 ticks short numbered from 0 to 1 by 0.2 /
  \axis left ticks short numbered from 0.2 to 1.4 by 0.4 /
  \put {$p(x)$} [lt] at 0.025 1.4
\setquadratic
\setlinear
\setsolid
\Black{\relax  
\plot
0.0000   0.1000
0.0150   0.1000
0.0300   0.1000
0.0450   0.1000
0.0600   0.1000
0.0750   0.1000
0.0900   0.1000
0.1050   0.1000
0.1200   0.1000
0.1350   0.1000
0.1500   0.1000
0.1650   0.1000
0.1800   0.1000
0.1950   0.1000
0.2100   0.1000
0.2250   0.1000
0.2400   0.1000
0.2550   0.1000
0.2700   0.1000
0.2850   0.1000
0.3000   0.1000
0.3150   0.1000
0.3300   0.1000
0.3450   0.1000
0.3600   0.1000
0.3750   0.1000
0.3900   0.1000
0.4050   0.1975
0.4200   0.4600
0.4350   0.6775
0.4500   0.8500
0.4650   0.9775
0.4800   1.0600
0.4950   1.0975
0.5100   1.0900
0.5250   1.0375
0.5400   0.9400
0.5550   0.7975
0.5700   0.6100
0.5850   0.3775
0.6000   0.1000
0.6150   0.1000
0.6300   0.1000
0.6450   0.1000
0.6600   0.1000
0.6750   0.1000
0.6900   0.1000
0.7050   0.1000
0.7200   0.1000
0.7350   0.1000
0.7500   0.1000
0.7650   0.1000
0.7800   0.1000
0.7950   0.1000
0.8100   0.1000
0.8250   0.1000
0.8400   0.1000
0.8550   0.1000
0.8700   0.1000
0.8850   0.1000
0.9000   0.1000
0.9150   0.1000
0.9300   0.1000
0.9450   0.1000
0.9600   0.1000
0.9750   0.1000
0.9900   0.1000
/}\relax
%
\Red {\relax   
\!\!
\plot
0.0000   0.1220
0.0150   0.1218
0.0300   0.1215
0.0450   0.1209
0.0600   0.1201
0.0750   0.1192
0.0900   0.1182
0.1050   0.1172
0.1200   0.1161
0.1350   0.1151
0.1500   0.1142
0.1650   0.1133
0.1800   0.1125
0.1950   0.1118
0.2100   0.1111
0.2250   0.1106
0.2400   0.1101
0.2550   0.1097
0.2700   0.1093
0.2850   0.1090
0.3000   0.1088
0.3150   0.1086
0.3300   0.1084
0.3450   0.1083
0.3600   0.1083
0.3750   0.1082
0.3900   0.1082
0.4050   0.2058
0.4200   0.4684
0.4350   0.6861
0.4500   0.8587
0.4650   0.9864
0.4800   1.0691
0.4950   1.1068
0.5100   1.0994
0.5250   1.0470
0.5400   0.9494
0.5550   0.8068
0.5700   0.6191
0.5850   0.3863
0.6000   0.1086
0.6150   0.1083
0.6300   0.1081
0.6450   0.1079
0.6600   0.1078
0.6750   0.1077
0.6900   0.1076
0.7050   0.1075
0.7200   0.1076
0.7350   0.1076
0.7500   0.1078
0.7650   0.1080
0.7800   0.1082
0.7950   0.1085
0.8100   0.1089
0.8250   0.1094
0.8400   0.1099
0.8550   0.1106
0.8700   0.1112
0.8850   0.1119
0.9000   0.1127
0.9150   0.1134
0.9300   0.1141
0.9450   0.1148
0.9600   0.1153
0.9750   0.1157
0.9900   0.1160
/}\relax
\Green{\relax   
\plot
0.0000   0.1073
0.0150   0.1072
0.0300   0.1071
0.0450   0.1068
0.0600   0.1065
0.0750   0.1062
0.0900   0.1058
0.1050   0.1054
0.1200   0.1050
0.1350   0.1046
0.1500   0.1043
0.1650   0.1040
0.1800   0.1037
0.1950   0.1035
0.2100   0.1033
0.2250   0.1031
0.2400   0.1030
0.2550   0.1029
0.2700   0.1028
0.2850   0.1028
0.3000   0.1028
0.3150   0.1028
0.3300   0.1029
0.3450   0.1030
0.3600   0.1032
0.3750   0.1034
0.3900   0.1036
0.4050   0.2015
0.4200   0.4643
0.4350   0.6822
0.4500   0.8551
0.4650   0.9829
0.4800   1.0657
0.4950   1.1033
0.5100   1.0959
0.5250   1.0433
0.5400   0.9455
0.5550   0.8028
0.5700   0.6149
0.5850   0.3821
0.6000   0.1043
0.6150   0.1041
0.6300   0.1038
0.6450   0.1037
0.6600   0.1035
0.6750   0.1034
0.6900   0.1034
0.7050   0.1034
0.7200   0.1034
0.7350   0.1035
0.7500   0.1035
0.7650   0.1037
0.7800   0.1038
0.7950   0.1040
0.8100   0.1042
0.8250   0.1045
0.8400   0.1048
0.8550   0.1052
0.8700   0.1056
0.8850   0.1060
0.9000   0.1064
0.9150   0.1069
0.9300   0.1073
0.9450   0.1077
0.9600   0.1080
0.9750   0.1082
0.9900   0.1083
/}\relax
\Blue{\relax  
\plot
0.0000   0.1006
0.0150   0.1005
0.0300   0.1005
0.0450   0.1004
0.0600   0.1004
0.0750   0.1003
0.0900   0.1002
0.1050   0.1001
0.1200   0.1000
0.1350   0.0999
0.1500   0.0999
0.1650   0.0998
0.1800   0.0998
0.1950   0.0997
0.2100   0.0997
0.2250   0.0997
0.2400   0.0997
0.2550   0.0997
0.2700   0.0997
0.2850   0.0997
0.3000   0.0997
0.3150   0.0998
0.3300   0.0998
0.3450   0.0999
0.3600   0.1000
0.3750   0.1001
0.3900   0.1002
0.4050   0.1979
0.4200   0.4606
0.4350   0.6783
0.4500   0.8510
0.4650   0.9787
0.4800   1.0613
0.4950   1.0989
0.5100   1.0914
0.5250   1.0388
0.5400   0.9411
0.5550   0.7984
0.5700   0.6107
0.5850   0.3780
0.6000   0.1003
0.6150   0.1002
0.6300   0.1001
0.6450   0.0999
0.6600   0.0999
0.6750   0.0998
0.6900   0.0998
0.7050   0.0997
0.7200   0.0997
0.7350   0.0997
0.7500   0.0997
0.7650   0.0997
0.7800   0.0997
0.7950   0.0997
0.8100   0.0998
0.8250   0.0998
0.8400   0.0999
0.8550   0.0999
0.8700   0.1000
0.8850   0.1001
0.9000   0.1001
0.9150   0.1002
0.9300   0.1003
0.9450   0.1004
0.9600   0.1005
0.9750   0.1005
0.9900   0.1006
/}\relax
\endpicture
}   
\setbox\figuretwo=\vbox{\hsize=\xfiglen 
\beginpicture
\scriptsize
  \setcoordinatesystem units <\xfigdim,\yfigdim>  point at 0 0
  \setplotarea x from 0 to 1, y from -0.2 to 1.4
  \axis bottom shiftedto y=0.0 ticks short numbered from 0 to 1 by 0.2 /
  \axis left ticks short numbered from 0.2 to 1.4 by 0.4 /
  \put {$p(x)$} [lt] at 0.025 1.4
\setlinear
\setsolid
\Black{\relax  
\plot
0.0000   0.1000
0.0150   0.1000
0.0300   0.1000
0.0450   0.1000
0.0600   0.1000
0.0750   0.1000
0.0900   0.1000
0.1050   0.1000
0.1200   0.1000
0.1350   0.1000
0.1500   0.1000
0.1650   0.1000
0.1800   0.1000
0.1950   0.1000
0.2100   0.1000
0.2250   0.1000
0.2400   0.1000
0.2550   0.1000
0.2700   0.1000
0.2850   0.1000
0.3000   0.1000
0.3150   0.1000
0.3300   0.1000
0.3450   0.1000
0.3600   0.1000
0.3750   0.1000
0.3900   0.1000
0.4050   0.1975
0.4200   0.4600
0.4350   0.6775
0.4500   0.8500
0.4650   0.9775
0.4800   1.0600
0.4950   1.0975
0.5100   1.0900
0.5250   1.0375
0.5400   0.9400
0.5550   0.7975
0.5700   0.6100
0.5850   0.3775
0.6000   0.1000
0.6150   0.1000
0.6300   0.1000
0.6450   0.1000
0.6600   0.1000
0.6750   0.1000
0.6900   0.1000
0.7050   0.1000
0.7200   0.1000
0.7350   0.1000
0.7500   0.1000
0.7650   0.1000
0.7800   0.1000
0.7950   0.1000
0.8100   0.1000
0.8250   0.1000
0.8400   0.1000
0.8550   0.1000
0.8700   0.1000
0.8850   0.1000
0.9000   0.1000
0.9150   0.1000
0.9300   0.1000
0.9450   0.1000
0.9600   0.1000
0.9750   0.1000
0.9900   0.1000
/}\relax
\setsolid
%
\Red{\relax   
\plot
0.0000  -0.1064
0.0150  -0.1056
0.0300  -0.1033
0.0450  -0.0996
0.0600  -0.0949
0.0750  -0.0894
0.0900  -0.0836
0.1050  -0.0778
0.1200  -0.0722
0.1350  -0.0671
0.1500  -0.0626
0.1650  -0.0589
0.1800  -0.0561
0.1950  -0.0543
0.2100  -0.0534
0.2250  -0.0536
0.2400  -0.0550
0.2550  -0.0576
0.2700  -0.0614
0.2850  -0.0667
0.3000  -0.0734
0.3150  -0.0818
0.3300  -0.0919
0.3450  -0.1038
0.3600  -0.1176
0.3750  -0.1332
0.3900  -0.1504
0.4050  -0.0724
0.4200   0.1690
0.4350   0.3704
0.4500   0.5365
0.4650   0.6716
0.4800   0.7790
0.4950   0.8597
0.5100   0.9114
0.5250   0.9290
0.5400   0.9058
0.5550   0.8356
0.5700   0.7136
0.5850   0.5371
0.6000   0.3053
0.6150   0.3375
0.6300   0.3605
0.6450   0.3762
0.6600   0.3864
0.6750   0.3929
0.6900   0.3968
0.7050   0.3994
0.7200   0.4015
0.7350   0.4038
0.7500   0.4070
0.7650   0.4116
0.7800   0.4183
0.7950   0.4276
0.8100   0.4410
0.8250   0.4585
0.8400   0.4798
0.8550   0.5048
0.8700   0.5333
0.8850   0.5648
0.9000   0.5984
0.9150   0.6331
0.9300   0.6671
0.9450   0.6986
0.9600   0.7254
0.9750   0.7452
0.9900   0.7565
/}\relax
\Green{\relax   
\plot
0.0000   0.0165
0.0150   0.0168
0.0300   0.0176
0.0450   0.0189
0.0600   0.0206
0.0750   0.0227
0.0900   0.0250
0.1050   0.0275
0.1200   0.0301
0.1350   0.0327
0.1500   0.0354
0.1650   0.0380
0.1800   0.0405
0.1950   0.0431
0.2100   0.0456
0.2250   0.0483
0.2400   0.0510
0.2550   0.0540
0.2700   0.0573
0.2850   0.0611
0.3000   0.0657
0.3150   0.0713
0.3300   0.0781
0.3450   0.0866
0.3600   0.0973
0.3750   0.1105
0.3900   0.1269
0.4050   0.2429
0.4200   0.5253
0.4350   0.7666
0.4500   0.9661
0.4650   1.1219
0.4800   1.2317
0.4950   1.2928
0.5100   1.3029
0.5250   1.2607
0.5400   1.1663
0.5550   1.0209
0.5700   0.8265
0.5850   0.5850
0.6000   0.2978
0.6150   0.2854
0.6300   0.2736
0.6450   0.2629
0.6600   0.2536
0.6750   0.2458
0.6900   0.2395
0.7050   0.2348
0.7200   0.2315
0.7350   0.2298
0.7500   0.2294
0.7650   0.2305
0.7800   0.2331
0.7950   0.2373
0.8100   0.2439
0.8250   0.2526
0.8400   0.2633
0.8550   0.2758
0.8700   0.2901
0.8850   0.3058
0.9000   0.3227
0.9150   0.3401
0.9300   0.3573
0.9450   0.3731
0.9600   0.3866
0.9750   0.3966
0.9900   0.4023
/}\relax
\Blue{\relax  
\plot
  0.0000   0.1344
  0.0150   0.1323
  0.0300   0.1265
  0.0450   0.1175
  0.0600   0.1063
  0.0750   0.0938
  0.0900   0.0809
  0.1050   0.0684
  0.1200   0.0569
  0.1350   0.0466
  0.1500   0.0377
  0.1650   0.0303
  0.1800   0.0243
  0.1950   0.0196
  0.2100   0.0162
  0.2250   0.0140
  0.2400   0.0130
  0.2550   0.0132
  0.2700   0.0147
  0.2850   0.0177
  0.3000   0.0225
  0.3150   0.0293
  0.3300   0.0387
  0.3450   0.0510
  0.3600   0.0670
  0.3750   0.0872
  0.3900   0.1120
  0.4050   0.2375
  0.4200   0.5299
  0.4350   0.7805
  0.4500   0.9868
  0.4650   1.1450
  0.4800   1.2506
  0.4950   1.2998
  0.5100   1.2904
  0.5250   1.2229
  0.5400   1.1000
  0.5550   0.9259
  0.5700   0.7048
  0.5850   0.4404
  0.6000   0.1347
  0.6150   0.1080
  0.6300   0.0858
  0.6450   0.0678
  0.6600   0.0536
  0.6750   0.0426
  0.6900   0.0343
  0.7050   0.0282
  0.7200   0.0240
  0.7350   0.0212
  0.7500   0.0197
  0.7650   0.0194
  0.7800   0.0202
  0.7950   0.0221
  0.8100   0.0258
  0.8250   0.0312
  0.8400   0.0380
  0.8550   0.0463
  0.8700   0.0561
  0.8850   0.0673
  0.9000   0.0796
  0.9150   0.0928
  0.9300   0.1062
  0.9450   0.1188
  0.9600   0.1297
  0.9750   0.1380
  0.9900   0.1427
/}\relax
\endpicture
}   
\setbox\figurethree=\vbox{\hsize=\xfiglen 
\beginpicture
\scriptsize
  \setcoordinatesystem units <\xfigdim,\yfigdim>  point at 0 0
  \setplotarea x from 0 to 1, y from -0.2 to 1.4
  \axis bottom shiftedto y=0.0 ticks short numbered from 0 to 1 by 0.2 /
  \axis left ticks short numbered from 0.2 to 1.4 by 0.4 /
  \put {$p(x)$} [lt] at 0.025 1.4
\setlinear
\setsolid 
\Black{\relax  
\plot
0.0000   0.1000
0.0150   0.1000
0.0300   0.1000
0.0450   0.1000
0.0600   0.1000
0.0750   0.1000
0.0900   0.1000
0.1050   0.1000
0.1200   0.1000
0.1350   0.1000
0.1500   0.1000
0.1650   0.1000
0.1800   0.1000
0.1950   0.1000
0.2100   0.1000
0.2250   0.1000
0.2400   0.1000
0.2550   0.1000
0.2700   0.1000
0.2850   0.1000
0.3000   0.1000
0.3150   0.1000
0.3300   0.1000
0.3450   0.1000
0.3600   0.1000
0.3750   0.1000
0.3900   0.1000
0.4050   0.1975
0.4200   0.4600
0.4350   0.6775
0.4500   0.8500
0.4650   0.9775
0.4800   1.0600
0.4950   1.0975
0.5100   1.0900
0.5250   1.0375
0.5400   0.9400
0.5550   0.7975
0.5700   0.6100
0.5850   0.3775
0.6000   0.1000
0.6150   0.1000
0.6300   0.1000
0.6450   0.1000
0.6600   0.1000
0.6750   0.1000
0.6900   0.1000
0.7050   0.1000
0.7200   0.1000
0.7350   0.1000
0.7500   0.1000
0.7650   0.1000
0.7800   0.1000
0.7950   0.1000
0.8100   0.1000
0.8250   0.1000
0.8400   0.1000
0.8550   0.1000
0.8700   0.1000
0.8850   0.1000
0.9000   0.1000
0.9150   0.1000
0.9300   0.1000
0.9450   0.1000
0.9600   0.1000
0.9750   0.1000
0.9900   0.1000
/}
\setsolid
\Red{\relax   
\!\!\!\!
\plot
0.0000  -0.1173
0.0150  -0.1166
0.0300  -0.1143
0.0450  -0.1106
0.0600  -0.1057
0.0750  -0.1000
0.0900  -0.0936
0.1050  -0.0870
0.1200  -0.0804
0.1350  -0.0741
0.1500  -0.0684
0.1650  -0.0634
0.1800  -0.0593
0.1950  -0.0561
0.2100  -0.0541
0.2250  -0.0532
0.2400  -0.0534
0.2550  -0.0549
0.2700  -0.0577
0.2850  -0.0619
0.3000  -0.0674
0.3150  -0.0744
0.3300  -0.0828
0.3450  -0.0927
0.3600  -0.1038
0.3750  -0.1160
0.3900  -0.1288
0.4050  -0.0452
0.4200   0.2028
0.4350   0.4100
0.4500   0.5792
0.4650   0.7128
0.4800   0.8123
0.4950   0.8776
0.5100   0.9064
0.5250   0.8950
0.5400   0.8393
0.5550   0.7357
0.5700   0.5821
0.5850   0.3775
0.6000   0.1220
0.6150   0.1358
0.6300   0.1450
0.6450   0.1509
0.6600   0.1546
0.6750   0.1569
0.6900   0.1585
0.7050   0.1598
0.7200   0.1612
0.7350   0.1631
0.7500   0.1658
0.7650   0.1695
0.7800   0.1746
0.7950   0.1815
0.8100   0.1909
0.8250   0.2031
0.8400   0.2182
0.8550   0.2365
0.8700   0.2581
0.8850   0.2830
0.9000   0.3108
0.9150   0.3407
0.9300   0.3712
0.9450   0.4005
0.9600   0.4261
0.9750   0.4457
0.9900   0.4569
/
}\relax
\Green{\relax   
\plot
0.0000   0.0339
0.0150   0.0326
0.0300   0.0292
0.0450   0.0239
0.0600   0.0174
0.0750   0.0102
0.0900   0.0031
0.1050  -0.0036
0.1200  -0.0096
0.1350  -0.0146
0.1500  -0.0186
0.1650  -0.0217
0.1800  -0.0238
0.1950  -0.0250
0.2100  -0.0253
0.2250  -0.0249
0.2400  -0.0235
0.2550  -0.0212
0.2700  -0.0177
0.2850  -0.0129
0.3000  -0.0063
0.3150   0.0024
0.3300   0.0139
0.3450   0.0288
0.3600   0.0479
0.3750   0.0721
0.3900   0.1020
0.4050   0.2341
0.4200   0.5346
0.4350   0.7953
0.4500   1.0135
0.4650   1.1849
0.4800   1.3044
0.4950   1.3669
0.5100   1.3693
0.5250   1.3108
0.5400   1.1939
0.5550   1.0229
0.5700   0.8024
0.5850   0.5366
0.6000   0.2284
0.6150   0.1985
0.6300   0.1729
0.6450   0.1517
0.6600   0.1346
0.6750   0.1212
0.6900   0.1111
0.7050   0.1037
0.7200   0.0989
0.7350   0.0962
0.7500   0.0955
0.7650   0.0966
0.7800   0.0996
0.7950   0.1045
0.8100   0.1120
0.8250   0.1221
0.8400   0.1344
0.8550   0.1491
0.8700   0.1661
0.8850   0.1853
0.9000   0.2061
0.9150   0.2280
0.9300   0.2499
0.9450   0.2704
0.9600   0.2881
0.9750   0.3013
0.9900   0.3089
/
}\relax
\Blue{\relax  
\plot
0.0000   0.1220
0.0150   0.1214
0.0300   0.1197
0.0450   0.1171
0.0600   0.1139
0.0750   0.1103
0.0900   0.1065
0.1050   0.1028
0.1200   0.0994
0.1350   0.0963
0.1500   0.0936
0.1650   0.0914
0.1800   0.0895
0.1950   0.0881
0.2100   0.0870
0.2250   0.0864
0.2400   0.0861
0.2550   0.0862
0.2700   0.0868
0.2850   0.0878
0.3000   0.0893
0.3150   0.0915
0.3300   0.0944
0.3450   0.0982
0.3600   0.1031
0.3750   0.1091
0.3900   0.1163
0.4050   0.2210
0.4200   0.4895
0.4350   0.7144
0.4500   0.8948
0.4650   1.0297
0.4800   1.1181
0.4950   1.1587
0.5100   1.1511
0.5250   1.0952
0.5400   0.9919
0.5550   0.8423
0.5700   0.6474
0.5850   0.4083
0.6000   0.1255
0.6150   0.1187
0.6300   0.1128
0.6450   0.1079
0.6600   0.1039
0.6750   0.1008
0.7050   0.0962
0.7200   0.0947
0.7350   0.0935
0.7500   0.0927
0.7650   0.0921
0.7800   0.0918
0.7950   0.0918
0.8100   0.0928
0.8250   0.0945
0.8400   0.0967
0.8550   0.0994
0.8700   0.1025
0.8850   0.1060
0.9000   0.1099
0.9150   0.1140
0.9300   0.1180
0.9450   0.1219
0.9600   0.1252
0.9750   0.1276
0.9900   0.1290
/}\relax
\endpicture
}  
\setbox\figurefour=\vbox{\hsize=\xfiglen
\beginpicture
\scriptsize
  \setcoordinatesystem units <\xfigdim,0.12\yfigdim>  point at 0 0
  \setplotarea x from 0 to 1, y from 0 to 12
  \axis bottom shiftedto y=0 ticks short numbered from 0 to 1 by 0.2 /
  \axis left ticks short unlabeled from 0 to 12 by 2 /
  \put {0} [r] at -0.04 0
  \put {4} [r] at -0.04 4
  \put {8} [r] at -0.04 8
  \put {12} [r] at -0.04 12
  \put {$q(x)$} [lt] at 0.025 12.
\setlinear
\setsolid 
\Black{\relax  
\plot
0.0000   0.5382
0.0150   0.5206
0.0300   0.5036
0.0450   0.4876
0.0600   0.4725
0.0750   0.4586
0.0900   0.4459
0.1050   0.4346
0.1200   0.4247
0.1350   0.4164
0.1500   0.4098
0.1650   0.4048
0.1800   0.4016
0.1950   0.4001
0.2100   0.4004
0.2250   0.4025
0.2400   0.4063
0.2550   0.4118
0.2700   0.4190
0.2850   0.4279
0.3000   0.4382
0.3150   0.4500
0.3300   0.4631
0.3450   0.4774
0.3600   0.4928
0.3750   0.5092
0.3900   0.5264
0.4050   0.5442
0.4200   0.5625
0.4350   0.5812
0.4500   0.6000
0.4650   0.6188
0.4800   0.6375
0.4950   0.6558
0.5100   0.6736
0.5250   0.6908
0.5400   0.7072
0.5550   0.7226
0.5700   0.7369
0.5850   0.7500
0.6000   0.7618
0.6150   2.9149
0.6300   4.7639
0.6450   6.2857
0.6600   7.4609
0.6750   8.2745
0.6900   8.7157
0.7050   8.7789
0.7200   8.4633
0.7350   7.7729
0.7500   6.7168
0.7650   5.3086
0.7800   3.5662
0.7950   1.5117
0.8100   0.7541
0.8250   0.7414
0.8400   0.7275
0.8550   0.7124
0.8700   0.6964
0.8850   0.6794
0.9000   0.6618
0.9150   0.6436
0.9300   0.6251
0.9450   0.6063
0.9600   0.5874
0.9750   0.5687
0.9900   0.5503
/}
\Red{\relax
\!\!\!\!
\plot
0.0000   1.1685
0.0150   1.1490
0.0300   1.1271
0.0450   1.1031
0.0600   1.0774
0.0750   1.0506
0.0900   1.0234
0.1050   0.9963
0.1200   0.9699
0.1350   0.9447
0.1500   0.9212
0.1650   0.8997
0.1800   0.8806
0.1950   0.8639
0.2100   0.8499
0.2250   0.8387
0.2400   0.8302
0.2550   0.8246
0.2700   0.8217
0.2850   0.8215
0.3000   0.8239
0.3150   0.8286
0.3300   0.8355
0.3450   0.8443
0.3600   0.8549
0.3750   0.8670
0.3900   0.8802
0.4050   0.8946
0.4200   0.9101
0.4350   0.9266
0.4500   0.9437
0.4650   0.9613
0.4800   0.9789
0.4950   0.9959
0.5100   1.0120
0.5250   1.0265
0.5400   1.0392
0.5550   1.0499
0.5700   1.0586
0.5850   1.0657
0.6000   1.0712
0.6150   3.2182
0.6300   5.0614
0.6450   6.5777
0.6600   7.7480
0.6750   8.5572
0.7050   9.0559
0.7200   8.7389
0.7350   8.0483
0.7500   6.9930
0.7650   5.5868
0.7800   3.8477
0.7950   1.7976
0.8100   1.0456
0.8250   1.0397
0.8400   1.0335
0.8550   1.0268
0.8700   1.0197
0.8850   1.0120
0.9000   1.0035
0.9150   0.9943
0.9300   0.9839
0.9450   0.9724
0.9600   0.9594
0.9750   0.9449
0.9900   0.9288
/}
\Green{\relax
\!\!\!\!
\plot
0.0000   0.7102
0.0150   0.6918
0.0300   0.6732
0.0450   0.6544
0.0600   0.6358
0.0750   0.6176
0.0900   0.6001
0.1050   0.5836
0.1200   0.5684
0.1350   0.5548
0.1500   0.5428
0.1650   0.5326
0.1800   0.5245
0.1950   0.5184
0.2100   0.5144
0.2250   0.5125
0.2400   0.5128
0.2550   0.5153
0.2700   0.5199
0.2850   0.5266
0.3000   0.5354
0.3150   0.5462
0.3300   0.5590
0.3450   0.5737
0.3600   0.5903
0.3750   0.6086
0.3900   0.6286
0.4050   0.6500
0.4200   0.6727
0.4350   0.6963
0.4500   0.7204
0.4650   0.7442
0.4800   0.7669
0.4950   0.7877
0.5100   0.8062
0.5250   0.8223
0.5400   0.8360
0.5550   0.8480
0.5700   0.8586
0.5850   0.8683
0.6000   0.8772
0.6150   3.0281
0.6300   4.8757
0.6450   6.3968
0.6600   7.5720
0.6750   8.3861
0.6900   8.8285
0.7050   8.8933
0.7200   8.5798
0.7350   7.8920
0.7500   6.8388
0.7650   5.4338
0.7800   3.6951
0.7950   1.6447
0.8100   0.8915
0.8250   0.8837
0.8400   0.8749
0.8550   0.8652
0.8700   0.8547
0.8850   0.8435
0.9000   0.8314
0.9150   0.8185
0.9300   0.8049
0.9450   0.7904
0.9600   0.7751
0.9750   0.7588
0.9900   0.7418
/}
\Blue{\relax
\!\!\!\!
\plot
0.0000   0.5423
0.0150   0.5243
0.0300   0.5072
0.0450   0.4907
0.0600   0.4751
0.0750   0.4606
0.0900   0.4473
0.1050   0.4354
0.1200   0.4250
0.1350   0.4161
0.1500   0.4090
0.1650   0.4035
0.1800   0.3999
0.1950   0.3981
0.2100   0.3981
0.2250   0.3999
0.2400   0.4036
0.2550   0.4091
0.2700   0.4163
0.2850   0.4253
0.3000   0.4359
0.3150   0.4481
0.3300   0.4619
0.3450   0.4772
0.3600   0.4938
0.3750   0.5118
0.3900   0.5310
0.4050   0.5512
0.4200   0.5725
0.4350   0.5944
0.4500   0.6164
0.4650   0.6382
0.4800   0.6591
0.4950   0.6785
0.5100   0.6961
0.5250   0.7118
0.5400   0.7257
0.5550   0.7380
0.5700   0.7491
0.5850   0.7591
0.6000   0.7681
0.6150   2.9189
0.6300   4.7661
0.6450   6.2865
0.6600   7.4606
0.6750   8.2733
0.6900   8.7140
 0.7050 8.7768
0.7200   8.4610
0.7350   7.7705
0.7500   6.7144
0.7650   5.3063
0.7800   3.5641
0.7950   1.5099
0.8100   0.7526
0.8250   0.7403
0.8400   0.7268
0.8550   0.7122
0.8700   0.6967
0.8850   0.6803
0.9000   0.6633
0.9150   0.6458
0.9300   0.6278
0.9450   0.6096
0.9600   0.5913
0.9750   0.5729
0.9900   0.5546
/}
\endpicture
} 
\setbox\figurefive=\vbox{\hsize=\xfiglen
\beginpicture
\scriptsize
  \setcoordinatesystem units <\xfigdim,0.12\yfigdim>  point at 0 0
  \setplotarea x from 0 to 1, y from -2 to 12
  \axis bottom shiftedto y=0 ticks short numbered from 0 to 1 by 0.2 /
  \axis left ticks short unlabeled from -2 to 12 by 2 /
  \axis bottom shiftedto y=0 ticks short numbered from 0 to 1 by 0.2 /
  \put {0} [r] at -0.04 0
  \put {4} [r] at -0.04 4
  \put {8} [r] at -0.04 8
  \put {12} [rt] at -0.04 12
  \put {-2} [r] at -0.04 -2
  \put {$q(x)$} [lt] at 0.025 12
\setlinear
\setsolid 
\Black{\relax  
\plot
0.0000   0.5382
0.0150   0.5206
0.0300   0.5036
0.0450   0.4876
0.0600   0.4725
0.0750   0.4586
0.0900   0.4459
0.1050   0.4346
0.1200   0.4247
0.1350   0.4164
0.1500   0.4098
0.1650   0.4048
0.1800   0.4016
0.1950   0.4001
0.2100   0.4004
0.2250   0.4025
0.2400   0.4063
0.2550   0.4118
0.2700   0.4190
0.2850   0.4279
0.3000   0.4382
0.3150   0.4500
0.3300   0.4631
0.3450   0.4774
0.3600   0.4928
0.3750   0.5092
0.3900   0.5264
0.4050   0.5442
0.4200   0.5625
0.4350   0.5812
0.4500   0.6000
0.4650   0.6188
0.4800   0.6375
0.4950   0.6558
0.5100   0.6736
0.5250   0.6908
0.5400   0.7072
0.5550   0.7226
0.5700   0.7369
0.5850   0.7500
0.6000   0.7618
0.6150   2.9149
0.6300   4.7639
0.6450   6.2857
0.6600   7.4609
0.6750   8.2745
0.6900   8.7157
0.7050   8.7789
0.7200   8.4633
0.7350   7.7729
0.7500   6.7168
0.7650   5.3086
0.7800   3.5662
0.7950   1.5117
0.8100   0.7541
0.8250   0.7414
0.8400   0.7275
0.8550   0.7124
0.8700   0.6964
0.8850   0.6794
0.9000   0.6618
0.9150   0.6436
0.9300   0.6251
0.9450   0.6063
0.9600   0.5874
0.9750   0.5687
0.9900   0.5503
/}
\Red{\relax   
\!\!\!\!
\plot
0.0000  -1.7845
0.0150  -1.7991
0.0300  -1.8078
0.0450  -1.8110
0.0600  -1.8097
0.0750  -1.8053
0.0900  -1.7993
0.1050  -1.7929
0.1200  -1.7875
0.1350  -1.7841
0.1500  -1.7837
0.1650  -1.7869
0.1800  -1.7944
0.1950  -1.8068
0.2100  -1.8247
0.2250  -1.8487
0.2400  -1.8795
0.2550  -1.9177
0.2700  -1.9644
0.2850  -2.0203
0.3000  -2.0867
0.3150  -2.1646
0.3300  -2.2550
0.3450  -2.3586
0.3600  -2.4758
0.3750  -2.6059
0.3900  -2.7464
0.4050  -2.9094
0.4200  -3.0861
0.4350  -3.2000
0.4500  -3.2057
0.4650  -3.0600
0.4800  -2.7313
0.4950  -2.2114
0.5100  -1.5223
0.5250  -0.7133
0.5400   0.1518
0.5550   1.0112
0.5700   1.8170
0.5850   2.5388
0.6000   3.1618
0.6150   5.7828
0.6300   7.9980
0.6450   9.7998
0.6600  11.1850
0.6750  12.1532
0.6900  12.7066
0.7050  12.8507
0.7200  12.5942
0.7350  11.9499
0.7500  10.9344
0.7650   9.5689
0.7800   7.8791
0.7950   5.8949
0.8100   5.2436
0.8250   5.3720
0.8400   5.5275
0.8550   5.7077
0.8700   5.9093
0.8850   6.1272
0.9000   6.3546
0.9150   6.5829
0.9300   6.8010
0.9450   6.9967
0.9600   7.1570
0.9750   7.2697
0.9900   7.3252
/}
\Green{\relax
\!\!\!\!
\plot
0.0000  -0.4673
0.0150  -0.4838
0.0300  -0.4978
0.0450  -0.5091
0.0600  -0.5176
0.0750  -0.5232
0.0900  -0.5259
0.1050  -0.5256
0.1200  -0.5223
0.1350  -0.5157
0.1500  -0.5057
0.1650  -0.4920
0.1800  -0.4742
0.1950  -0.4519
0.2100  -0.4244
0.2250  -0.3910
0.2400  -0.3509
0.2550  -0.3030
0.2700  -0.2460
0.2850  -0.1782
0.3000  -0.0976
0.3150  -0.0018
0.3300   0.1120
0.3450   0.2472
0.3600   0.4077
0.3750   0.5973
0.3900   0.8197
0.4050   1.0606
0.4200   1.3100
0.4350   1.6001
0.4500   1.9205
0.4650   2.2535
0.4800   2.5753
0.4950   2.8596
0.5100   3.0844
0.5250   3.2375
0.5400   3.3198
0.5550   3.3429
0.5700   3.3243
0.5850   3.2824
0.6000   3.2325
0.6150   5.2862
0.6300   7.0371
0.6450   8.4680
0.6600   9.5627
0.6750  10.3078
0.6900  10.6935
0.7050  10.7141
0.7200  10.3690
0.7350   9.6621
0.7500   8.6024
0.7650   7.2038
0.7800   5.4846
0.7950   3.4672
0.8100   2.7712
0.8250   2.8408
0.8400   2.9243
0.8550   3.0203
0.8700   3.1270
0.8850   3.2417
0.9000   3.3609
0.9150   3.4797
0.9300   3.5922
0.9450   3.6920
0.9600   3.7719
0.9750   3.8255
0.9900   3.8478
/}
\Blue{\relax
\!\!\!\!
\plot
0.0000   0.7754
0.0150   0.7526
0.0300   0.7200
0.0450   0.6792
0.0600   0.6321
0.0750   0.5810
0.0900   0.5282
0.1050   0.4757
0.1200   0.4254
0.1350   0.3787
0.1500   0.3368
0.1650   0.3006
0.1800   0.2706
0.1950   0.2473
0.2100   0.2310
0.2250   0.2221
0.2400   0.2208
0.2550   0.2275
0.2700   0.2425
0.2850   0.2663
0.3000   0.2996
0.3150   0.3430
0.3300   0.3974
0.3450   0.4637
0.3600   0.5426
0.3750   0.6349
0.3900   0.7405
0.4050   0.8419
0.4200   0.9264
0.4350   1.0249
0.4500   1.1289
0.4650   1.2273
0.4800   1.3077
0.4950   1.3594
0.5100   1.3763
0.5250   1.3592
0.5400   1.3152
0.5550   1.2553
0.5700   1.1913
0.5850   1.1334
0.6000   1.0889
0.6150   3.1627
0.6300   4.9417
0.6450   6.4033
0.6600   7.5277
0.6750   8.2990
0.7050   8.7410
0.7200   8.4035
0.7350   7.6964
0.7500   6.6284
0.7650   5.2127
0.7800   3.4671
0.7950   1.4134
0.8100   0.6705
0.8250   0.6828
0.8400   0.6989
0.8550   0.7185
0.8700   0.7409
0.8850   0.7656
0.9000   0.7912
0.9150   0.8165
0.9300   0.8396
0.9450   0.8585
0.9600   0.8711
0.9750   0.8754
0.9900   0.8698
/}
\endpicture
}
%
\setbox\figuresix=\vbox{\hsize=\xfiglen
\beginpicture
\scriptsize
  \setcoordinatesystem units <\xfigdim,0.12\yfigdim>  point at 0 0
  \setplotarea x from 0 to 1, y from -2 to 12
  \axis bottom shiftedto y=0 ticks short numbered from 0 to 1 by 0.2 /
  \axis left ticks short unlabeled from -2 to 12 by 2 /
  \put {0} [r] at -0.04 0
  \put {4} [r] at -0.04 4
  \put {8} [r] at -0.04 8
  \put {12} [r] at -0.04 12
  \put {$q(x)$} [lt] at 0.025 12
\setlinear
\setsolid 
\Black{\relax  
\plot
0.0000   0.5382
0.0150   0.5206
0.0300   0.5036
0.0450   0.4876
0.0600   0.4725
0.0750   0.4586
0.0900   0.4459
0.1050   0.4346
0.1200   0.4247
0.1350   0.4164
0.1500   0.4098
0.1650   0.4048
0.1800   0.4016
0.1950   0.4001
0.2100   0.4004
0.2250   0.4025
0.2400   0.4063
0.2550   0.4118
0.2700   0.4190
0.2850   0.4279
0.3000   0.4382
0.3150   0.4500
0.3300   0.4631
0.3450   0.4774
0.3600   0.4928
0.3750   0.5092
0.3900   0.5264
0.4050   0.5442
0.4200   0.5625
0.4350   0.5812
0.4500   0.6000
0.4650   0.6188
0.4800   0.6375
0.4950   0.6558
0.5100   0.6736
0.5250   0.6908
0.5400   0.7072
0.5550   0.7226
0.5700   0.7369
0.5850   0.7500
0.6000   0.7618
0.6150   2.9149
0.6300   4.7639
0.6450   6.2857
0.6600   7.4609
0.6750   8.2745
0.6900   8.7157
0.7050   8.7789
0.7200   8.4633
0.7350   7.7729
0.7500   6.7168
0.7650   5.3086
0.7800   3.5662
0.7950   1.5117
0.8100   0.7541
0.8250   0.7414
0.8400   0.7275
0.8550   0.7124
0.8700   0.6964
0.8850   0.6794
0.9000   0.6618
0.9150   0.6436
0.9300   0.6251
0.9450   0.6063
0.9600   0.5874
0.9750   0.5687
0.9900   0.5503
/}
\setsolid
\Red{\relax   
\!\!\!\!
\plot
0.0000   0.3203
0.0150   0.2947
0.0300   0.2542
0.0450   0.2011
0.0600   0.1382
0.0750   0.0688
0.0900  -0.0038
0.1050  -0.0764
0.1200  -0.1464
0.1350  -0.2116
0.1500  -0.2703
0.1650  -0.3212
0.1800  -0.3635
0.1950  -0.3963
0.2100  -0.4191
0.2250  -0.4315
0.2400  -0.4327
0.2550  -0.4223
0.2700  -0.3994
0.2850  -0.3630
0.3000  -0.3120
0.3150  -0.2449
0.3300  -0.1601
0.3450  -0.0556
0.3600   0.0705
0.3750   0.2198
0.3900   0.3931
0.4050   0.5567
0.4200   0.6904
0.4350   0.8673
0.4500   1.0833
0.4650   1.3281
0.4800   1.5852
0.4950   1.8339
0.5100   2.0547
0.5250   2.2342
0.5400   2.3685
0.5550   2.4626
0.5700   2.5274
0.5850   2.5758
0.6000   2.6191
0.6150   4.7266
0.6300   6.5196
0.6450   7.9809
0.6600   9.0952
0.6750   9.8503
0.6900  10.2373
0.7050  10.2519
0.7200   9.8944
0.7350   9.1700
0.7500   8.0889
0.7650   6.6663
0.7800   4.9218
0.7950   2.8798
0.8100   2.1727
0.8250   2.2413
0.8400   2.3281
0.8550   2.4314
0.8700   2.5487
0.8850   2.6768
0.9000   2.8113
0.9150   2.9466
0.9300   3.0758
0.9450   3.1911
0.9600   3.2843
0.9750   3.3478
0.9900   3.3756
/}\relax
\Green{\relax   
\plot
0.0000   1.4254
0.0150   1.3957
0.0300   1.3433
0.0450   1.2713
0.0600   1.1840
0.0750   1.0863
0.0900   0.9833
0.1050   0.8796
0.1200   0.7794
0.1350   0.6858
0.1500   0.6016
0.1650   0.5286
0.1800   0.4682
0.1950   0.4215
0.2100   0.3893
0.2250   0.3725
0.2400   0.3716
0.2550   0.3877
0.2700   0.4217
0.2850   0.4749
0.3000   0.5489
0.3150   0.6453
0.3300   0.7664
0.3450   0.9142
0.3600   1.0909
0.3750   1.2977
0.3900   1.5349
0.4050   1.7673
0.4200   1.9685
0.4350   2.1984
0.4500   2.4383
0.4650   2.6633
0.4800   2.8456
0.4950   2.9605
0.5100   2.9938
0.5250   2.9462
0.5400   2.8329
0.5550   2.6784
0.5700   2.5094
0.5850   2.3488
0.6000   2.2136
0.6150   4.1753
0.6300   5.8528
0.6450   7.2249
0.6600   8.2723
0.6750   8.9783
0.6900   9.3308
0.7050   9.3221
0.7200   8.9498
0.7350   8.2165
0.7500   7.1303
0.7650   5.7040
0.7800   3.9552
0.7950   1.9055
0.8100   1.1847
0.8250   1.2329
0.8400   1.2932
0.8550   1.3644
0.8700   1.4448
0.8850   1.5321
0.9000   1.6235
0.9150   1.7148
0.9300   1.8014
0.9450   1.8779
0.9600   1.9385
0.9750   1.9780
0.9900   1.9922
/}\relax
\Blue{\relax  
\plot
0.0000   0.8007
0.0150   0.7731
0.0300   0.7271
0.0450   0.6652
0.0600   0.5907
0.0750   0.5077
0.0900   0.4202
0.1050   0.3321
0.1200   0.2466
0.1350   0.1664
0.1500   0.0935
0.1650   0.0293
0.1800  -0.0249
0.1950  -0.0684
0.2100  -0.1005
0.2250  -0.1209
0.2400  -0.1290
0.2550  -0.1242
0.2700  -0.1060
0.2850  -0.0735
0.3000  -0.0257
0.3150   0.0385
0.3300   0.1205
0.3450   0.2217
0.3600   0.3436
0.3750   0.4871
0.3900   0.6522
0.4050   0.8045
0.4200   0.9194
0.4350   1.0587
0.4500   1.2078
0.4650   1.3480
0.4800   1.4588
0.4950   1.5228
0.5100   1.5305
0.5250   1.4837
0.5400   1.3949
0.5550   1.2828
0.5700   1.1675
0.5850   1.0662
0.6000   0.9916
0.6150   3.0121
0.6300   4.7445
0.6450   6.1663
0.6600   7.2573
0.6750   8.0008
0.6900   8.3847
0.7050   8.4015
0.7200   8.0491
0.7350   7.3305
0.7500   6.2536
0.7650   4.8316
0.7800   3.0819
0.7950   1.0264
0.8100   0.2946
0.8250   0.3267
0.8400   0.3663
0.8550   0.4125
0.8700   0.4643
0.8850   0.5202
0.9000   0.5784
0.9150   0.6361
0.9300   0.6901
0.9450   0.7370
0.9600   0.7728
0.9750   0.7941
0.9900   0.7980
/}\relax
\endpicture
} 
%

\bigskip
\setbox\figurefourutwo =\vbox{
\hbox to \hsize{\copy\figureone\hss\copy\figuretwo\hss\copy\figurethree}
\vskip20pt
\hbox to \hsize{\copy\figurefour\hss\copy\figurefive\hss\copy\figuresix}
}
%
\setbox\figureone=\vbox{\hsize=\xfiglen
\beginpicture
\scriptsize
  \setcoordinatesystem units <\xfigdim,\yfigdim>  point at 0 0
  \setplotarea x from 0 to 1, y from -0.2 to 1.4
  \axis bottom shiftedto y=0 ticks short numbered from 0 to 1 by 0.2 /
  \axis left ticks short numbered from 0.2 to 1.4 by 0.4 /
  \put {$p(x)$} [lt] at 0.025 1.4
\setquadratic
\setlinear
\setsolid
\Black{\relax  
\plot
  0.0000   0.1000
  0.0150   0.1000
  0.0300   0.1000
  0.0450   0.1000
  0.0600   0.1000
  0.0750   0.1000
  0.0900   0.1000
  0.1050   0.1000
  0.1200   0.1000
  0.1350   0.1000
  0.1500   0.1000
  0.1650   0.1000
  0.1800   0.1000
  0.1950   0.1000
  0.2100   0.1000
  0.2250   0.1000
  0.2400   0.1000
  0.2550   0.1000
  0.2700   0.1000
  0.2850   0.1000
  0.3000   0.1000
  0.3150   0.1000
  0.3300   0.1000
  0.3450   0.1000
  0.3600   0.1000
  0.3750   0.1000
  0.3900   0.1000
  0.4050   0.1975
  0.4200   0.4600
  0.4350   0.6775
  0.4500   0.8500
  0.4650   0.9775
  0.4800   1.0600
  0.4950   1.0975
  0.5100   1.0900
  0.5250   1.0375
  0.5400   0.9400
  0.5550   0.7975
  0.5700   0.6100
  0.5850   0.3775
  0.6000   0.1000
  0.6150   0.1000
  0.6300   0.1000
  0.6450   0.1000
  0.6600   0.1000
  0.6750   0.1000
  0.6900   0.1000
  0.7050   0.1000
  0.7200   0.1000
  0.7350   0.1000
  0.7500   0.1000
  0.7650   0.1000
  0.7800   0.1000
  0.7950   0.1000
  0.8100   0.1000
  0.8250   0.1000
  0.8400   0.1000
  0.8550   0.1000
  0.8700   0.1000
  0.8850   0.1000
  0.9000   0.1000
  0.9150   0.1000
  0.9300   0.1000
  0.9450   0.1000
  0.9600   0.1000
  0.9750   0.1000
  0.9900   0.1000
/}\relax
%
\Red {\relax   
\plot
  0.0000   0.1250
  0.0150   0.1247
  0.0300   0.1240
  0.0450   0.1231
  0.0600   0.1218
  0.0750   0.1203
  0.0900   0.1188
  0.1050   0.1172
  0.1200   0.1156
  0.1350   0.1142
  0.1500   0.1128
  0.1650   0.1116
  0.1800   0.1105
  0.1950   0.1095
  0.2100   0.1086
  0.2250   0.1079
  0.2400   0.1072
  0.2550   0.1066
  0.2700   0.1061
  0.2850   0.1056
  0.3000   0.1051
  0.3150   0.1046
  0.3300   0.1042
  0.3450   0.1037
  0.3600   0.1031
  0.3750   0.1025
  0.3900   0.1019
  0.4050   0.1987
  0.4200   0.4605
  0.4350   0.6773
  0.4500   0.8491
  0.4650   0.9760
  0.4800   1.0581
  0.4950   1.0954
  0.5100   1.0880
  0.5250   1.0358
  0.5400   0.9388
  0.5550   0.7970
  0.5700   0.6102
  0.5850   0.3783
  0.6000   0.1014
  0.6150   0.1020
  0.6300   0.1026
  0.6450   0.1031
  0.6600   0.1035
  0.6750   0.1039
  0.6900   0.1043
  0.7050   0.1047
  0.7200   0.1051
  0.7350   0.1055
  0.7500   0.1059
  0.7650   0.1064
  0.7800   0.1070
  0.7950   0.1077
  0.8100   0.1084
  0.8250   0.1093
  0.8400   0.1102
  0.8550   0.1113
  0.8700   0.1125
  0.8850   0.1138
  0.9000   0.1151
  0.9150   0.1165
  0.9300   0.1178
  0.9450   0.1190
  0.9600   0.1200
  0.9750   0.1208
  0.9900   0.1212
/}\relax
\Green{\relax   
\plot
  0.0000   0.1244
  0.0150   0.1241
  0.0300   0.1235
  0.0450   0.1224
  0.0600   0.1211
  0.0750   0.1196
  0.0900   0.1181
  0.1050   0.1165
  0.1200   0.1150
  0.1350   0.1136
  0.1500   0.1124
  0.1650   0.1113
  0.1800   0.1104
  0.1950   0.1096
  0.2100   0.1089
  0.2250   0.1084
  0.2400   0.1080
  0.2550   0.1077
  0.2700   0.1075
  0.2850   0.1074
  0.3000   0.1074
  0.3150   0.1076
  0.3300   0.1078
  0.3450   0.1081
  0.3600   0.1086
  0.3750   0.1092
  0.3900   0.1099
  0.4050   0.2083
  0.4200   0.4717
  0.4350   0.6902
  0.4500   0.8637
  0.4650   0.9921
  0.4800   1.0752
  0.4950   1.1130
  0.5100   1.1055
  0.5250   1.0526
  0.5400   0.9544
  0.5550   0.8110
  0.5700   0.6226
  0.5850   0.3891
  0.6000   0.1107
  0.6150   0.1099
  0.6300   0.1092
  0.6450   0.1087
  0.6600   0.1083
  0.6750   0.1080
  0.6900   0.1078
  0.7050   0.1077
  0.7200   0.1077
  0.7350   0.1078
  0.7500   0.1081
  0.7650   0.1084
  0.7800   0.1089
  0.7950   0.1095
  0.8100   0.1102
  0.8250   0.1110
  0.8400   0.1120
  0.8550   0.1132
  0.8700   0.1145
  0.8850   0.1160
  0.9000   0.1176
  0.9150   0.1192
  0.9300   0.1208
  0.9450   0.1222
  0.9600   0.1235
  0.9750   0.1244
  0.9900   0.1250
/
}
\Blue{\relax   
\!\!\!\!
\plot
  0.0000   0.1039
  0.0150   0.1038
  0.0300   0.1036
  0.0450   0.1034
  0.0600   0.1031
  0.0750   0.1027
  0.0900   0.1024
  0.1050   0.1020
  0.1200   0.1017
  0.1350   0.1014
  0.1500   0.1012
  0.1650   0.1010
  0.1800   0.1008
  0.1950   0.1007
  0.2100   0.1006
  0.2250   0.1005
  0.2400   0.1005
  0.2550   0.1004
  0.2700   0.1004
  0.2850   0.1005
  0.3000   0.1005
  0.3150   0.1006
  0.3300   0.1007
  0.3450   0.1008
  0.3600   0.1010
  0.3750   0.1013
  0.3900   0.1016
  0.4050   0.1995
  0.4200   0.4624
  0.4350   0.6803
  0.4500   0.8533
  0.4650   0.9812
  0.4800   1.0640
  0.4950   1.1017
  0.5100   1.0941
  0.5250   1.0414
  0.5400   0.9436
  0.5550   0.8007
  0.5700   0.6127
  0.5850   0.3798
  0.6000   0.1019
  0.6150   0.1015
  0.6300   0.1012
  0.6450   0.1010
  0.6600   0.1008
  0.6750   0.1007
  0.6900   0.1006
  0.7050   0.1005
  0.7200   0.1005
  0.7350   0.1005
  0.7500   0.1005
  0.7650   0.1005
  0.7800   0.1006
  0.7950   0.1007
  0.8100   0.1008
  0.8250   0.1010
  0.8400   0.1011
  0.8550   0.1014
  0.8700   0.1016
  0.8850   0.1019
  0.9000   0.1023
  0.9150   0.1026
  0.9300   0.1030
  0.9450   0.1033
  0.9600   0.1036
  0.9750   0.1039
  0.9900   0.1040
/}\relax
\endpicture
}
\setbox\figurefour=\vbox{\hsize=\xfiglen
\beginpicture
\scriptsize
  \setcoordinatesystem units <\xfigdim,0.8\yfigdim>  point at 0 0
  \setplotarea x from 0 to 1, y from -0.2 to 1.6
  \axis bottom shiftedto y=0 ticks short numbered from 0 to 1 by 0.2 /
  \axis left ticks short numbered from 0 to 1.6 by 0.4 /
  \put {$q(x)$} [lt] at 0.025 1.6
\put{\phantom{-0.2}} [r] at 0 -0.2
\setquadratic
\setlinear
\setsolid
\Black{\relax  
\plot
  0.0000   0.0679
  0.0150   0.0635
  0.0300   0.0595
  0.0450   0.0557
  0.0600   0.0523
  0.0750   0.0493
  0.0900   0.0466
  0.1050   0.0443
  0.1200   0.0423
  0.1350   0.0406
  0.1500   0.0394
  0.1650   0.0384
  0.1800   0.0378
  0.1950   0.0375
  0.2100   0.0376
  0.2250   0.0380
  0.2400   0.0387
  0.2550   0.0397
  0.2700   0.0412
  0.2850   0.0429
  0.3000   0.0450
  0.3150   0.0475
  0.3300   0.0503
  0.3450   0.0534
  0.3600   0.0569
  0.3750   0.0608
  0.3900   0.0649
  0.4050   0.0694
  0.4200   0.0742
  0.4350   0.0792
  0.4500   0.0844
  0.4650   0.0898
  0.4800   0.0952
  0.4950   0.1008
  0.5100   0.1064
  0.5250   0.1118
  0.5400   0.1172
  0.5550   0.1224
  0.5700   0.1273
  0.5850   0.1318
  0.6000   0.1360
  0.6150   0.5275
  0.6300   0.8720
  0.6450   1.1612
  0.6600   1.3879
  0.6750   1.5467
  0.6900   1.6334
  0.7050   1.6458
  0.7200   1.5837
  0.7350   1.4486
  0.7500   1.2440
  0.7650   0.9749
  0.7800   0.6480
  0.7950   0.2712
  0.8100   0.1333
  0.8250   0.1288
  0.8400   0.1240
  0.8550   0.1190
  0.8700   0.1136
  0.8850   0.1082
  0.9000   0.1027
  0.9150   0.0971
  0.9300   0.0916
  0.9450   0.0862
  0.9600   0.0809
  0.9750   0.0758
  0.9900   0.0710
/\relax}\relax
\Red{\plot
 0.0000   0.4296
  0.0150   0.4243
  0.0300   0.4175
  0.0450   0.4094
  0.0600   0.4001
  0.0750   0.3899
  0.0900   0.3790
  0.1050   0.3677
  0.1200   0.3563
  0.1350   0.3448
  0.1500   0.3336
  0.1650   0.3228
  0.1800   0.3125
  0.1950   0.3027
  0.2100   0.2937
  0.2250   0.2855
  0.2400   0.2780
  0.2550   0.2713
  0.2700   0.2654
  0.2850   0.2603
  0.3000   0.2559
  0.3150   0.2522
  0.3300   0.2492
  0.3450   0.2467
  0.3600   0.2447
  0.3750   0.2430
  0.3900   0.2417
  0.4050   0.2406
  0.4200   0.2399
  0.4350   0.2396
  0.4500   0.2399
  0.4650   0.2410
  0.4800   0.2433
  0.4950   0.2469
  0.5100   0.2520
  0.5250   0.2585
  0.5400   0.2659
  0.5550   0.2739
  0.5700   0.2822
  0.5850   0.2903
  0.6000   0.2981
  0.6150   0.6933
  0.6300   1.0413
  0.6450   1.3341
  0.6600   1.5645
  0.6750   1.7271
  0.6900   1.8178
  0.7050   1.8346
  0.7200   1.7771
  0.7350   1.6471
  0.7500   1.4479
  0.7650   1.1847
  0.7800   0.8641
  0.7950   0.4941
  0.8100   0.3634
  0.8250   0.3666
  0.8400   0.3696
  0.8550   0.3726
  0.8700   0.3753
  0.8850   0.3778
  0.9000   0.3800
  0.9150   0.3816
  0.9300   0.3825
  0.9450   0.3827
  0.9600   0.3819
  0.9750   0.3800
  0.9900   0.3769
/}
\Green{\relax   
\!\!\!\!
\plot
  0.0000   0.2807
  0.0150   0.2756
  0.0300   0.2698
  0.0450   0.2631
  0.0600   0.2559
  0.0750   0.2482
  0.0900   0.2403
  0.1050   0.2323
  0.1200   0.2244
  0.1350   0.2168
  0.1500   0.2094
  0.1650   0.2026
  0.1800   0.1963
  0.1950   0.1905
  0.2100   0.1854
  0.2250   0.1810
  0.2400   0.1773
  0.2550   0.1744
  0.2700   0.1722
  0.2850   0.1709
  0.3000   0.1704
  0.3150   0.1708
  0.3300   0.1720
  0.3450   0.1743
  0.3600   0.1775
  0.3750   0.1816
  0.3900   0.1868
  0.4050   0.1929
  0.4200   0.1999
  0.4350   0.2075
  0.4500   0.2156
  0.4650   0.2237
  0.4800   0.2312
  0.4950   0.2379
  0.5100   0.2435
  0.5250   0.2478
  0.5400   0.2512
  0.5550   0.2539
  0.5700   0.2563
  0.5850   0.2586
  0.6000   0.2610
  0.6150   0.6513
  0.6300   0.9952
  0.6450   1.2845
  0.6600   1.5120
  0.6750   1.6721
  0.6900   1.7607
  0.7050   1.7756
  0.7200   1.7164
  0.7350   1.5846
  0.7500   1.3838
  0.7650   1.1189
  0.7800   0.7965
  0.7950   0.4247
  0.8100   0.2921
  0.8250   0.2932
  0.8400   0.2943
  0.8550   0.2952
  0.8700   0.2960
  0.8850   0.2966
  0.9000   0.2970
  0.9150   0.2970
  0.9300   0.2966
  0.9450   0.2956
  0.9600   0.2939
  0.9750   0.2914
  0.9900   0.2880
/
}
%
\Blue{\relax   
\!\!\!\!
\plot
  0.0000   0.0904
  0.0150   0.0859
  0.0300   0.0816
  0.0450   0.0775
  0.0600   0.0736
  0.0750   0.0699
  0.0900   0.0666
  0.1050   0.0636
  0.1200   0.0609
  0.1350   0.0586
  0.1500   0.0566
  0.1650   0.0550
  0.1800   0.0538
  0.1950   0.0530
  0.2100   0.0525
  0.2250   0.0524
  0.2400   0.0527
  0.2550   0.0534
  0.2700   0.0546
  0.2850   0.0561
  0.3000   0.0581
  0.3150   0.0605
  0.3300   0.0635
  0.3450   0.0669
  0.3600   0.0708
  0.3750   0.0754
  0.3900   0.0804
  0.4050   0.0861
  0.4200   0.0922
  0.4350   0.0988
  0.4500   0.1056
  0.4650   0.1124
  0.4800   0.1191
  0.4950   0.1252
  0.5100   0.1307
  0.5250   0.1354
  0.5400   0.1396
  0.5550   0.1433
  0.5700   0.1466
  0.5850   0.1497
  0.6000   0.1526
  0.6150   0.5431
  0.6300   0.8868
  0.6450   1.1754
  0.6600   1.4018
  0.6750   1.5604
  0.6900   1.6471
  0.7050   1.6596
  0.7200   1.5977
  0.7350   1.4629
  0.7500   1.2586
  0.7650   0.9899
  0.7800   0.6634
  0.7950   0.2871
  0.8100   0.1498
  0.8250   0.1459
  0.8400   0.1418
  0.8550   0.1374
  0.8700   0.1328
  0.8850   0.1281
  0.9000   0.1233
  0.9150   0.1184
  0.9300   0.1136
  0.9450   0.1087
  0.9600   0.1040
  0.9750   0.0993
  0.9900   0.0946
/}
\endpicture
}
\setbox\figurefive=\vbox{\hsize=\xfiglen
\beginpicture
\scriptsize
  \setcoordinatesystem units <\xfigdim,0.8\yfigdim>  point at 0 0
  \setplotarea x from 0 to 1, y from 0 to 1.6
  \axis bottom shiftedto y=0 ticks short numbered from 0 to 1 by 0.2 /
  \axis left ticks short numbered from 0 to 1.6 by 0.4 /
  \put {$q(x)$} [lt] at 0.025 1.6
\setquadratic
\setlinear
\setsolid
\Black{\relax  
\plot
  0.0000   0.0679
  0.0150   0.0635
  0.0300   0.0595
  0.0450   0.0557
  0.0600   0.0523
  0.0750   0.0493
  0.0900   0.0466
  0.1050   0.0443
  0.1200   0.0423
  0.1350   0.0406
  0.1500   0.0394
  0.1650   0.0384
  0.1800   0.0378
  0.1950   0.0375
  0.2100   0.0376
  0.2250   0.0380
  0.2400   0.0387
  0.2550   0.0397
  0.2700   0.0412
  0.2850   0.0429
  0.3000   0.0450
  0.3150   0.0475
  0.3300   0.0503
  0.3450   0.0534
  0.3600   0.0569
  0.3750   0.0608
  0.3900   0.0649
  0.4050   0.0694
  0.4200   0.0742
  0.4350   0.0792
  0.4500   0.0844
  0.4650   0.0898
  0.4800   0.0952
  0.4950   0.1008
  0.5100   0.1064
  0.5250   0.1118
  0.5400   0.1172
  0.5550   0.1224
  0.5700   0.1273
  0.5850   0.1318
  0.6000   0.1360
  0.6150   0.5275
  0.6300   0.8720
  0.6450   1.1612
  0.6600   1.3879
  0.6750   1.5467
  0.6900   1.6334
  0.7050   1.6458
  0.7200   1.5837
  0.7350   1.4486
  0.7500   1.2440
  0.7650   0.9749
  0.7800   0.6480
  0.7950   0.2712
  0.8100   0.1333
  0.8250   0.1288
  0.8400   0.1240
  0.8550   0.1190
  0.8700   0.1136
  0.8850   0.1082
  0.9000   0.1027
  0.9150   0.0971
  0.9300   0.0916
  0.9450   0.0862
  0.9600   0.0809
  0.9750   0.0758
  0.9900   0.0710
/}
%
\Red{\relax
\!\!\!\!
\plot
  0.0000  -0.3701
  0.0150  -0.3745
  0.0300  -0.3797
  0.0450  -0.3852
  0.0600  -0.3910
  0.0750  -0.3970
  0.0900  -0.4031
  0.1050  -0.4093
  0.1200  -0.4155
  0.1350  -0.4217
  0.1500  -0.4280
  0.1650  -0.4342
  0.1800  -0.4405
  0.1950  -0.4468
  0.2100  -0.4533
  0.2250  -0.4599
  0.2400  -0.4666
  0.2550  -0.4736
  0.2700  -0.4809
  0.2850  -0.4886
  0.3000  -0.4966
  0.3150  -0.5049
  0.3300  -0.5137
  0.3450  -0.5226
  0.3600  -0.5317
  0.3750  -0.5405
  0.3900  -0.5486
  0.4050  -0.5579
  0.4200  -0.5679
  0.4350  -0.5684
  0.4500  -0.5552
  0.4650  -0.5247
  0.4800  -0.4750
  0.4950  -0.4071
  0.5100  -0.3247
  0.5250  -0.2339
  0.5400  -0.1414
  0.5550  -0.0529
  0.5700   0.0278
  0.5850   0.0988
  0.6000   0.1598
  0.6150   0.5909
  0.6300   0.9656
  0.6450   1.2775
  0.6600   1.5213
  0.6750   1.6930
  0.7050   1.8105
  0.7200   1.7558
  0.7350   1.6280
  0.7500   1.4314
  0.7650   1.1718
  0.7800   0.8565
  0.7950   0.4944
  0.8100   0.3758
  0.8250   0.3955
  0.8400   0.4189
  0.8550   0.4459
  0.8700   0.4764
  0.8850   0.5097
  0.9000   0.5451
  0.9150   0.5811
  0.9300   0.6161
  0.9450   0.6480
  0.9600   0.6743
  0.9750   0.6926
  0.9900   0.7011
/}
\Green{\relax
\!\!\!\!
\plot
  0.0000  -0.3607
  0.0150  -0.3648
  0.0300  -0.3690
  0.0450  -0.3730
  0.0600  -0.3768
  0.0750  -0.3802
  0.0900  -0.3832
  0.1050  -0.3859
  0.1200  -0.3881
  0.1350  -0.3900
  0.1500  -0.3913
  0.1650  -0.3921
  0.1800  -0.3923
  0.1950  -0.3919
  0.2100  -0.3908
  0.2250  -0.3888
  0.2400  -0.3858
  0.2550  -0.3816
  0.2700  -0.3760
  0.2850  -0.3687
  0.3000  -0.3592
  0.3150  -0.3472
  0.3300  -0.3319
  0.3450  -0.3126
  0.3600  -0.2885
  0.3750  -0.2587
  0.3900  -0.2220
  0.4050  -0.1804
  0.4200  -0.1351
  0.4350  -0.0810
  0.4500  -0.0204
  0.4650   0.0429
  0.4800   0.1036
  0.4950   0.1560
  0.5100   0.1956
  0.5250   0.2200
  0.5400   0.2300
  0.5550   0.2285
  0.5700   0.2196
  0.5850   0.2075
  0.6000   0.1955
  0.6150   0.5659
  0.6300   0.8915
  0.6450   1.1649
  0.6600   1.3790
  0.6750   1.5284
  0.6900   1.6088
  0.7050   1.6180
  0.7200   1.5554
  0.7350   1.4226
  0.7500   1.2229
  0.7650   0.9613
  0.7800   0.6446
  0.7950   0.2806
  0.8100   0.1590
  0.8250   0.1741
  0.8400   0.1916
  0.8550   0.2114
  0.8700   0.2334
  0.8850   0.2572
  0.9000   0.2821
  0.9150   0.3074
  0.9300   0.3317
  0.9450   0.3536
  0.9600   0.3714
  0.9750   0.3834
  0.9900   0.3882
/}
%
\Blue{\relax
\!\!\!\!
\plot
  0.0000  -0.0922
  0.0150  -0.0979
  0.0300  -0.1065
  0.0450  -0.1175
  0.0600  -0.1302
  0.0750  -0.1440
  0.0900  -0.1582
  0.1050  -0.1722
  0.1200  -0.1857
  0.1350  -0.1982
  0.1500  -0.2097
  0.1650  -0.2199
  0.1800  -0.2288
  0.1950  -0.2362
  0.2100  -0.2422
  0.2250  -0.2466
  0.2400  -0.2495
  0.2550  -0.2508
  0.2700  -0.2503
  0.2850  -0.2478
  0.3000  -0.2433
  0.3150  -0.2364
  0.3300  -0.2268
  0.3450  -0.2143
  0.3600  -0.1982
  0.3750  -0.1784
  0.3900  -0.1545
  0.4050  -0.1291
  0.4200  -0.1043
  0.4350  -0.0755
  0.4500  -0.0451
  0.4650  -0.0166
  0.4800   0.0065
  0.4950   0.0209
  0.5100   0.0247
  0.5250   0.0183
  0.5400   0.0040
  0.5550  -0.0150
  0.5700  -0.0350
  0.5850  -0.0531
  0.6000  -0.0674
  0.6150   0.3032
  0.6300   0.6303
  0.6450   0.9057
  0.6600   1.1218
  0.6750   1.2727
  0.6900   1.3538
  0.7050   1.3627
  0.7200   1.2988
  0.7350   1.1633
  0.7500   0.9596
  0.7650   0.6927
  0.7800   0.3691
  0.7950  -0.0033
  0.8100  -0.1349
  0.8250  -0.1315
  0.8400  -0.1274
  0.8550  -0.1226
  0.8700  -0.1171
  0.8850  -0.1110
  0.9000  -0.1046
  0.9150  -0.0982
  0.9300  -0.0921
  0.9450  -0.0869
  0.9600  -0.0833
  0.9750  -0.0817
  0.9900  -0.0829
/}
\endpicture
}
\setbox\figuresix=\vbox{\hsize=\xfiglen
\beginpicture
\scriptsize
  \setcoordinatesystem units <\xfigdim,0.8\yfigdim>  point at 0 0
  \setplotarea x from 0 to 1, y from -0.5 to 1.6
  \axis bottom shiftedto y=0 ticks short numbered from 0 to 1 by 0.2 /
  \axis left ticks short numbered from 0 to 1.6 by 0.4 /
  \put {$q(x)$} [lt] at 0.025 1.6
\setlinear
\setsolid 
\Black{\relax  
\plot
0.0000   0.0679
0.0150   0.0635
0.0300   0.0595
0.0450   0.0557
0.0600   0.0523
0.0750   0.0493
0.0900   0.0466
0.1050   0.0443
0.1200   0.0423
0.1350   0.0406
0.1500   0.0394
0.1650   0.0384
0.1800   0.0378
0.1950   0.0375
0.2100   0.0376
0.2250   0.0380
0.2400   0.0387
0.2550   0.0397
0.2700   0.0412
0.2850   0.0429
0.3000   0.0450
0.3150   0.0475
0.3300   0.0503
0.3450   0.0534
0.3600   0.0569
0.3750   0.0608
0.3900   0.0649
0.4050   0.0694
0.4200   0.0742
0.4350   0.0792
0.4500   0.0844
0.4650   0.0898
0.4800   0.0952
0.4950   0.1008
0.5100   0.1064
0.5250   0.1118
0.5400   0.1172
0.5550   0.1224
0.5700   0.1273
0.5850   0.1318
0.6000   0.1360
0.6150   0.5275
0.6300   0.8720
0.6450   1.1612
0.6600   1.3879
0.6750   1.5467
0.6900   1.6334
0.7050   1.6458
0.7200   1.5837
0.7350   1.4486
0.7500   1.2440
0.7650   0.9749
0.7800   0.6480
0.7950   0.2712
0.8100   0.1333
0.8250   0.1288
0.8400   0.1240
0.8550   0.1190
0.8700   0.1136
0.8850   0.1082
0.9000   0.1027
0.9150   0.0971
0.9300   0.0916
0.9450   0.0862
0.9600   0.0809
0.9750   0.0758
0.9900   0.0710
/}
\Red{\relax  
\!\!\!\!
\plot
0.0000  -0.0127
0.0150  -0.0212
0.0300  -0.0382
0.0450  -0.0622
0.0600  -0.0916
0.0750  -0.1244
0.0900  -0.1590
0.1050  -0.1936
0.1200  -0.2270
0.1350  -0.2583
0.1500  -0.2868
0.1650  -0.3119
0.1800  -0.3335
0.1950  -0.3511
0.2100  -0.3648
0.2250  -0.3743
0.2400  -0.3795
0.2550  -0.3800
0.2700  -0.3756
0.2850  -0.3660
0.3000  -0.3505
0.3150  -0.3286
0.3300  -0.2994
0.3450  -0.2621
0.3600  -0.2158
0.3750  -0.1596
0.3900  -0.0930
0.4050  -0.0213
0.4200   0.0515
0.4350   0.1343
0.4500   0.2221
0.4650   0.3074
0.4800   0.3811
0.4950   0.4345
0.5100   0.4616
0.5250   0.4612
0.5400   0.4371
0.5550   0.3964
0.5700   0.3473
0.5850   0.2971
0.6000   0.2514
0.6150   0.5862
0.6300   0.8801
0.6450   1.1258
0.6600   1.3160
0.6750   1.4450
0.6900   1.5082
0.7050   1.5030
0.7200   1.4287
0.7350   1.2866
0.7500   1.0798
0.7650   0.8134
0.7800   0.4941
0.7950   0.1300
0.8100   0.0118
0.8250   0.0334
0.8400   0.0598
0.8550   0.0908
0.8700   0.1259
0.8850   0.1645
0.9000   0.2053
0.9150   0.2471
0.9300   0.2876
0.9450   0.3246
0.9600   0.3552
0.9750   0.3767
0.9900   0.3869
/}
%
\Green{\relax  
\!\!\!\!
\plot
0.0000   0.1152
0.0150   0.1060
0.0300   0.0872
0.0450   0.0603
0.0600   0.0273
0.0750  -0.0097
0.0900  -0.0485
0.1050  -0.0874
0.1200  -0.1248
0.1350  -0.1596
0.1500  -0.1911
0.1650  -0.2187
0.1800  -0.2420
0.1950  -0.2608
0.2100  -0.2748
0.2250  -0.2839
0.2400  -0.2879
0.2550  -0.2865
0.2700  -0.2793
0.2850  -0.2658
0.3000  -0.2454
0.3150  -0.2174
0.3300  -0.1809
0.3450  -0.1348
0.3600  -0.0781
0.3750  -0.0099
0.3900   0.0703
0.4050   0.1570
0.4200   0.2455
0.4350   0.3425
0.4500   0.4408
0.4650   0.5303
0.4800   0.6000
0.4950   0.6398
0.5100   0.6443
0.5250   0.6141
0.5400   0.5558
0.5550   0.4796
0.5700   0.3965
0.5850   0.3156
0.6000   0.2433
0.6150   0.5563
0.6300   0.8329
0.6450   1.0650
0.6600   1.2449
0.6750   1.3661
0.6900   1.4235
0.7050   1.4142
0.7200   1.3370
0.7350   1.1927
0.7500   0.9845
0.7650   0.7169
0.7800   0.3964
0.7950   0.0307
0.8100  -0.0897
0.8250  -0.0710
0.8400  -0.0482
0.8550  -0.0215
0.8700   0.0086
0.8850   0.0416
0.9000   0.0766
0.9150   0.1122
0.9300   0.1468
0.9450   0.1781
0.9600   0.2039
0.9750   0.2219
0.9900   0.2301
/}
%
\Blue{\relax  
\!\!\!\!
\plot
0.0000   0.1695
0.0150   0.1600
0.0300   0.1404
0.0450   0.1122
0.0600   0.0776
0.0750   0.0387
0.0900  -0.0022
0.1050  -0.0432
0.1200  -0.0827
0.1350  -0.1196
0.1500  -0.1530
0.1650  -0.1824
0.1800  -0.2074
0.1950  -0.2278
0.2100  -0.2433
0.2250  -0.2538
0.2400  -0.2590
0.2550  -0.2588
0.2700  -0.2527
0.2850  -0.2403
0.3000  -0.2211
0.3150  -0.1943
0.3300  -0.1590
0.3450  -0.1144
0.3600  -0.0595
0.3750   0.0065
0.3900   0.0841
0.4050   0.1677
0.4200   0.2523
0.4350   0.3449
0.4500   0.4380
0.4650   0.5219
0.4800   0.5857
0.4950   0.6199
0.5100   0.6194
0.5250   0.5851
0.5400   0.5239
0.5550   0.4459
0.5700   0.3619
0.5850   0.2806
0.6000   0.2085
0.6150   0.5219
0.6300   0.7991
0.6450   1.0316
0.6600   1.2119
0.6750   1.3334
0.6900   1.3910
0.7050   1.3817
0.7200   1.3042
0.7350   1.1596
0.7500   0.9508
0.7650   0.6824
0.7800   0.3609
0.7950  -0.0059
0.8100  -0.1277
0.8250  -0.1107
0.8400  -0.0898
0.8550  -0.0653
0.8700  -0.0374
0.8850  -0.0069
0.9000   0.0255
0.9150   0.0584
0.9300   0.0904
0.9450   0.1194
0.9600   0.1432
0.9750   0.1597
0.9900   0.1670
/}
\endpicture
} 
\setbox\figurequarteruthree =\vbox{
\hbox to \hsize{\copy\figureone\hss\copy\figuretwo\hss\copy\figurethree}
\vskip20pt
\hbox to \hsize{\copy\figurefour\hss\copy\figurefive\hss\copy\figuresix}
}
%
%
%
%
%
%

%
\copy\figureutwo  
\copy\figurequarteruthree
\vfill\eject
\copy\figurefourutwo  
\copy\figureuthree

\vspace*{1cm}
It is clearly visible that the larger $f$ values in $f_3(u) = 4u^2$ and $f_4(u) = u^3$ put a larger emphasis on the difference between the $p(x)$ and the $q(x)$ term, thus allowing for a better reconstruction of both.

We started all our reconstructions with zero values of $p$ and $q$.
Reconstruction by the fixed point schemes worked remarkably well, even with this very crude initial guess.
However, the nonlinearity $f(u)$ needs to yield sufficiently large (in magnitude) values of the states $u$ in order to provide sufficient sensitivity of the observation with respect to the multiplier $p$. 
The reconstruction quality of $q$ heavily depends on how well $p$ has been recovered and the error in $q$ is usually larger than the one in $p$. This is due to the fact that the factor $f(u)$ of $p$ emphasises this coefficient more strongly than the factor $u$ of $q$.
The two-run version \eqref{obs} (left column) obviously incorporates more independent information, thus leading to substantially better reconstructions as compared to the one-run and two times observation \eqref{obs2} setting; in the latter, $T_1$ being closer to zero and further away from $T_2$ (middle column) yields better results.

\medskip 

In order to illustrate the influence of the time differentiation order, in Table~\ref{table:alpha} we list some iterates obtained with different values of $\alpha$, using $f(u)=u^3$ and taking measurements at $T=0.3$ for two different sources. Interestingly, the first iterate becomes worse for increasing $\alpha$, while quickly catching up during the iterates so that at the 6th step, relative errors are smaller for larger $\alpha$.
\begin{table}
\begin{center}
\begin{tabular}{||l||c|c||c|c||}
\hline
$\alpha$ & \multicolumn{2}{|c|}{p} & \multicolumn{2}{|c|}{q}\\
\hline
\hline
&it=1 & it=6&it=1 & it=6\\
\hline
0.5
&0.014132  
&0.000748  
%
&0.121003  
&0.001565  
\\ \hline
0.6
&0.014231  
&0.000763  
%
&0.120720  
&0.001590  
\\ \hline
0.7
&0.017053  
&0.000738  
%
&0.134141  
&0.001548  
\\ \hline
0.8
&0.021438  
&0.000695  
%
&0.147655  
&0.001429  
\\ \hline
0.9
&0.029297  
&0.000669  
%
&0.172068  
&0.001375  
\\ \hline
0.99
&0.040586  
&0.000638  
%
&0.208235  
&0.001321  
\\ \hline\hline
\end{tabular}
\end{center}
\caption{Relative errors for varying differentiation order $\alpha$; 
$f(u)=u^3$, two runs, $T=0.3$.
\label{table:alpha}}
\end{table}

\medskip 

Finally, we test the behaviour with noise, again in the scenario of two runs with different sources, observations at $T=0.3$, and $f(u)=u^3$; the value of $\alpha=0.8$ is the same as the one in the error and reconstruction plots above; see Table~\ref{table:delta}.
To this end, normally distributed random noise was added to the exact values of $u_1(T,x)$, $u_2(T,x)$ and the result was smoothed and rescaled, so that the $H^2(\Omega)$ norm matches the given percentages. This corresponds to the setting of Remark~\ref{rem:noisydata}. Correspondingly, we observe an $O(\delta)$ convergence rate in the 10th iterate. Note that no early stopping is needed; we only have to make sure that at least $\sim|\log(\delta)|$ fixed point steps are carried out.
\begin{table}
\begin{center}
\begin{tabular}{||l||c|c||}
\hline
$\delta$ & p & q\\
\hline
\hline
3\%& 0.034690 & 0.140247\\
1\%& 0.011105 & 0.044538\\
0.3\%& 0.003228 & 0.014412\\
0.1\%& 0.001208 & 0.005018
\\ \hline\hline
\end{tabular}
\end{center}
\caption{Relative errors at 10th iterate for varying noise level $\delta$; 
$f(u)=u^3$, two runs, $T=0.3$, $\alpha=0.8$.
\label{table:delta}}
\end{table}

\subsection{Possible alternative observation scenarios}

Our basic paradigm of having an input source  $r(t,x)$
and and measuring {\it census\/} data at later times $T_1$ and $T_2$ or
having a pair of sources $r_i(t,x)$, $i=1,2$ and making measurements for
each at a single time $T$ is clearly open to alternatives.
What is feasible depends of course on the application and what changes in
any experimental set-up are possible.
For population models it may very well be the case that measurements
(spatial census data) at a series of times $\{T_i\}$ are available while
changing other factors in the equation and initial/boundary conditions are not.
For the many applications of the two-term model with $p(x)$ and $q(x)$
unknown and where no additional source changes in $r(t,x)$ are possible,
it may be feasible, for example, to
re-run the experiment with two different initial values $u_{0,i}(x)$, $i=1,\,2$.
Alternatively, one might change the boundary conditions on $\partial\Omega$
to include both Neumann and Dirichlet types or, more generally, modify the
boundary conditions to be of impedance type
$\frac{\partial u}{\partial\nu} + \beta u = a$ and be able to modify
either $\beta$ or the source $a(t,x)$.

A reconstruction of 
two phantoms similar to the previous ones
for $p$ and $q$ is shown in the figure
below in the case of having two initial conditions 
$u_{0,1}(x)=\frac12(1+\cos(\pi(1-x)))$,
$u_{0,2}(x)=1 - 2(1-x)^3 + 3(1-x)^2$
and $f(u)=u^2$, as well as $\alpha=1$, $T=0.5$, and homo\-ge\-neous Neumann conditions.
Here we used as starting approximations the constant values of
$p(x)=0.5$ and $q(x)=4.0$.
Although these are approximately the midpoint value of the respective functions
they are quite far apart from the actual in any $L^\rrr\times L^\sss$ norm.
As expected from such a choice, the first iteration may give quite a poor 
update as we see in particular here.
\medskip

 \newbox\pqfigurelegend
\xfigdim = 2.0true in
\yfigdim = 1.2true in
%
\setbox\pqfigurelegend=\vbox{\hsize=\xfigdim   %
\beginpicture
  \setcoordinatesystem units <0.6\xfigdim,0.7\yfigdim>  point at 0 0
  \setplotarea x from 0 to 1, y from 0.0 to 1.4
\scriptsize
\sevenrm
\put{$p_{init}(x) = 0.5$} [l] at 0 1.3
\put{$q_{init}(x) = 4.0$} [l] at 0 1.1
\put{actual} [l] at 0 0.8 \plot 0.5 0.8 0.8 0.8 /
\put{iter $1$} [l] at 0 0.6  \Red{\plot 0.5 0.6 0.8 0.6 / }\relax
\put{iter $2$} [l] at 0 0.4  \Green{\plot 0.5 0.4 0.8 0.4 /}\relax
\put{iter $6$} [l] at 0 0.2  \Blue{\plot 0.5 0.2 0.8 0.2 /}\relax
\endpicture
}
\setbox\figureone=\vbox{\hsize=\xfigdim   
%
%
\beginpicture
  \setcoordinatesystem units <\xfigdim,\yfigdim>  point at 0 0
  \setplotarea x from 0 to 1.0, y from 0 to 1.2
\sevenrm
  \axis bottom shiftedto y=0 ticks short numbered from 0 to 1 by 0.2 /
  \axis left ticks short numbered from 0 to 1.2 by 0.4 /
\put{\scriptsize{$p(x)$}} [l] at 0.02 1.2
\setquadratic
\setdashes <3pt>
\setsolid
\Black{\relax  
\plot
         0    0.1500
    0.0200    0.1500
    0.0400    0.1500
    0.0600    0.1500
    0.0800    0.1500
    0.1000    0.1500
    0.1200    0.1500
    0.1400    0.1500
    0.1600    0.1500
    0.1800    0.1500
    0.2000    0.1501
    0.2200    0.1504
    0.2400    0.1512
    0.2600    0.1532
    0.2800    0.1579
    0.3000    0.1683
    0.3200    0.1892
    0.3400    0.2273
    0.3600    0.2909
    0.3800    0.3869
    0.4000    0.5179
    0.4200    0.6773
    0.4400    0.8477
    0.4600    1.0021
    0.4800    1.1108
    0.5000    1.1500
    0.5200    1.1108
    0.5400    1.0021
    0.5600    0.8477
    0.5800    0.6773
    0.6000    0.5179
    0.6200    0.3869
    0.6400    0.2909
    0.6600    0.2273
    0.6800    0.1892
    0.7000    0.1683
    0.7200    0.1579
    0.7400    0.1532
    0.7600    0.1512
    0.7800    0.1504
    0.8000    0.1501
    0.8200    0.1500
    0.8400    0.1500
    0.8600    0.1500
    0.8800    0.1500
    0.9000    0.1500
    0.9200    0.1500
    0.9400    0.1500
    0.9600    0.1500
    0.9800    0.1500
    1.0000    0.1500
/}\relax
\Red{\relax  
\plot
         0    0.4229
    0.0200    0.4210
    0.0400    0.4195
    0.0600    0.4170
    0.0800    0.4135
    0.1000    0.4091
    0.1200    0.4036
    0.1400    0.3973
    0.1600    0.3899
    0.1800    0.3817
    0.2000    0.3727
    0.2200    0.3629
    0.2400    0.3528
    0.2600    0.3430
    0.2800    0.3353
    0.3000    0.3324
    0.3200    0.3392
    0.3400    0.3626
    0.3600    0.4106
    0.3800    0.4904
    0.4000    0.6042
    0.4200    0.7455
    0.4400    0.8966
    0.4600    1.0304
    0.4800    1.1169
    0.5000    1.1321
    0.5200    1.0671
    0.5400    0.9306
    0.5600    0.7464
    0.5800    0.5444
    0.6000    0.3515
    0.6200    0.1854
    0.6400    0.0525
    0.6600   -0.0493
    0.6800   -0.1269
    0.7000   -0.1880
    0.7200   -0.2389
    0.7400   -0.2838
    0.7600   -0.3247
    0.7800   -0.3626
    0.8000   -0.3976
    0.8200   -0.4297
    0.8400   -0.4585
    0.8600   -0.4841
    0.8800   -0.5063
    0.9000   -0.5251
    0.9200   -0.5405
    0.9400   -0.5524
    0.9600   -0.5609
    0.9800   -0.5660
    1.0000    0.1476
/}\relax
\Green{\relax 
\plot
         0    0.1091
    0.0200    0.1076
    0.0400    0.1072
    0.0600    0.1065
    0.0800    0.1055
    0.1000    0.1043
    0.1200    0.1029
    0.1400    0.1011
    0.1600    0.0991
    0.1800    0.0969
    0.2000    0.0944
    0.2200    0.0919
    0.2400    0.0896
    0.2600    0.0883
    0.2800    0.0895
    0.3000    0.0961
    0.3200    0.1128
    0.3400    0.1467
    0.3600    0.2056
    0.3800    0.2969
    0.4000    0.4228
    0.4200    0.5769
    0.4400    0.7416
    0.4600    0.8902
    0.4800    0.9926
    0.5000    1.0252
    0.5200    0.9789
    0.5400    0.8629
    0.5600    0.7007
    0.5800    0.5223
    0.6000    0.3547
    0.6200    0.2154
    0.6400    0.1110
    0.6600    0.0393
    0.6800   -0.0065
    0.7000   -0.0345
    0.7200   -0.0514
    0.7400   -0.0619
    0.7600   -0.0689
    0.7800   -0.0739
    0.8000   -0.0778
    0.8200   -0.0809
    0.8400   -0.0835
    0.8600   -0.0857
    0.8800   -0.0875
    0.9000   -0.0890
    0.9200   -0.0902
    0.9400   -0.0911
    0.9600   -0.0917
    0.9800   -0.0921
    1.0000    0.1476
/}\relax
\Blue{\relax 
\plot
         0    0.1280
    0.0200    0.1266
    0.0400    0.1265
    0.0600    0.1264
    0.0800    0.1262
    0.1000    0.1260
    0.1200    0.1257
    0.1400    0.1254
    0.1600    0.1250
    0.1800    0.1246
    0.2000    0.1242
    0.2200    0.1239
    0.2400    0.1241
    0.2600    0.1255
    0.2800    0.1295
    0.3000    0.1392
    0.3200    0.1593
    0.3400    0.1966
    0.3600    0.2592
    0.3800    0.3544
    0.4000    0.4843
    0.4200    0.6427
    0.4400    0.8120
    0.4600    0.9653
    0.4800    1.0726
    0.5000    1.1105
    0.5200    1.0698
    0.5400    0.9596
    0.5600    0.8034
    0.5800    0.6313
    0.6000    0.4701
    0.6200    0.3373
    0.6400    0.2394
    0.6600    0.1741
    0.6800    0.1343
    0.7000    0.1119
    0.7200    0.1001
    0.7400    0.0941
    0.7600    0.0910
    0.7800    0.0893
    0.8000    0.0882
    0.8200    0.0875
    0.8400    0.0869
    0.8600    0.0864
    0.8800    0.0859
    0.9000    0.0856
    0.9200    0.0853
    0.9400    0.0851
    0.9600    0.0849
    0.9800    0.0848
    1.0000    0.1476
/}\relax
\endpicture
}
\yfigdim = 0.18true in
\setbox\figuretwo=\vbox{\hsize=\xfigdim   
\beginpicture
  \setcoordinatesystem units <\xfigdim,\yfigdim>  point at 0 0
  \setplotarea x from 0 to 1.0, y from 0 to 8
\sevenrm
  \axis bottom shiftedto y=0 ticks short numbered from 0 to 1 by 0.2 /
  \axis left ticks short numbered from 0 to 8 by 2 /
\put{\scriptsize{$q(x)$}} [l] at 0.02 8
\setquadratic
\setdashes <3pt>
\setsolid
\Black{\relax  
\plot
         0    0.5000
    0.0200    0.5000
    0.0400    0.5000
    0.0600    0.5000
    0.0800    0.5000
    0.1000    0.5000
    0.1200    0.5000
    0.1400    0.5000
    0.1600    0.5000
    0.1800    0.5000
    0.2000    0.5000
    0.2200    0.5000
    0.2400    0.5000
    0.2600    0.5000
    0.2800    0.5000
    0.3000    0.5000
    0.3200    0.5000
    0.3400    0.5000
    0.3600    0.5001
    0.3800    0.5002
    0.4000    0.5009
    0.4200    0.5028
    0.4400    0.5081
    0.4600    0.5221
    0.4800    0.5553
    0.5000    0.6282
    0.5200    0.7741
    0.5400    1.0411
    0.5600    1.4860
    0.5800    2.1585
    0.6000    3.0752
    0.6200    4.1910
    0.6400    5.3837
    0.6600    6.4650
    0.6800    7.2255
    0.7000    7.5000
    0.7200    7.2255
    0.7400    6.4650
    0.7600    5.3837
    0.7800    4.1910
    0.8000    3.0752
    0.8200    2.1585
    0.8400    1.4860
    0.8600    1.0411
    0.8800    0.7741
    0.9000    0.6282
    0.9200    0.5553
    0.9400    0.5221
    0.9600    0.5081
    0.9800    0.5028
    1.0000    0.5009
/}\relax
\Red{\relax  
\plot
         0    1.2095
    0.0200    1.2086
    0.0400    1.2072
    0.0600    1.2050
    0.0800    1.2019
    0.1000    1.1980
    0.1200    1.1933
    0.1400    1.1878
    0.1600    1.1815
    0.1800    1.1746
    0.2000    1.1671
    0.2200    1.1590
    0.2400    1.1505
    0.2600    1.1415
    0.2800    1.1323
    0.3000    1.1227
    0.3200    1.1131
    0.3400    1.1034
    0.3600    1.0939
    0.3800    1.0846
    0.4000    1.0761
    0.4200    1.0692
    0.4400    1.0664
    0.4600    1.0729
    0.4800    1.0995
    0.5000    1.1666
    0.5200    1.3078
    0.5400    1.5710
    0.5600    2.0129
    0.5800    2.6831
    0.6000    3.5975
    0.6200    4.7103
    0.6400    5.8981
    0.6600    6.9714
    0.6800    7.7197
    0.7000    7.9770
    0.7200    7.6800
    0.7400    6.8922
    0.7600    5.7799
    0.7800    4.5541
    0.8000    3.4043
    0.8200    2.4546
    0.8400    1.7510
    0.8600    1.2778
    0.8800    0.9857
    0.9000    0.8183
    0.9200    0.7277
    0.9400    0.6806
    0.9600    0.6567
    0.9800    0.6454
    1.0000    0.5002
/}\relax
\Green{\relax 
\plot
         0    0.8121
    0.0200    0.8113
    0.0400    0.8102
    0.0600    0.8083
    0.0800    0.8057
    0.1000    0.8023
    0.1200    0.7981
    0.1400    0.7933
    0.1600    0.7876
    0.1800    0.7813
    0.2000    0.7742
    0.2200    0.7664
    0.2400    0.7579
    0.2600    0.7487
    0.2800    0.7389
    0.3000    0.7283
    0.3200    0.7170
    0.3400    0.7051
    0.3600    0.6925
    0.3800    0.6794
    0.4000    0.6658
    0.4200    0.6528
    0.4400    0.6423
    0.4600    0.6394
    0.4800    0.6547
    0.5000    0.7084
    0.5200    0.8340
    0.5400    1.0795
    0.5600    1.5018
    0.5800    2.1509
    0.6000    3.0436
    0.6200    4.1355
    0.6400    5.3046
    0.6600    6.3632
    0.6800    7.1026
    0.7000    7.3578
    0.7200    7.0663
    0.7400    6.2912
    0.7600    5.1976
    0.7800    3.9947
    0.8000    2.8705
    0.8200    1.9470
    0.8400    1.2690
    0.8600    0.8194
    0.8800    0.5486
    0.9000    0.3995
    0.9200    0.3241
    0.9400    0.2888
    0.9600    0.2734
    0.9800    0.2672
    1.0000    0.5002
/}\relax
\Blue{\relax 
\plot
         0    0.5127
    0.0200    0.5123
    0.0400    0.5122
    0.0600    0.5121
    0.0800    0.5119
    0.1000    0.5117
    0.1200    0.5114
    0.1400    0.5111
    0.1600    0.5107
    0.1800    0.5103
    0.2000    0.5099
    0.2200    0.5094
    0.2400    0.5088
    0.2600    0.5082
    0.2800    0.5076
    0.3000    0.5070
    0.3200    0.5063
    0.3400    0.5056
    0.3600    0.5049
    0.3800    0.5043
    0.4000    0.5041
    0.4200    0.5052
    0.4400    0.5097
    0.4600    0.5228
    0.4800    0.5552
    0.5000    0.6271
    0.5200    0.7720
    0.5400    1.0380
    0.5600    1.4819
    0.5800    2.1534
    0.6000    3.0691
    0.6200    4.1841
    0.6400    5.3760
    0.6600    6.4564
    0.6800    7.2161
    0.7000    7.4897
    0.7200    7.2143
    0.7400    6.4528
    0.7600    5.3705
    0.7800    4.1769
    0.8000    3.0602
    0.8200    2.1429
    0.8400    1.4698
    0.8600    1.0244
    0.8800    0.7571
    0.9000    0.6108
    0.9200    0.5376
    0.9400    0.5041
    0.9600    0.4900
    0.9800    0.4845
    1.0000    0.5002
/}\relax
\endpicture
}
%

\noindent
\smallskip
\hbox to \hsize{\hss\copy\figureone\hss\copy\figuretwo\hss\copy\pqfigurelegend\hss}


\vskip50pt

The next figure shows reconstructions in the same setting $f(u)=u^2$, $\alpha=1$, $T=0.5$, 
from two different boundary conditions with the same initial condition $u_0(x)=1+\cos(\pi x)$,  
Dirichlet $u(t,0)=2-t$ at $x=0$ combined with homogeneous Neumann at $x=1$ and homogeneous Neumann at both $x=0$ and $x=1$.

 \newbox\pqfigurerates
\xfigdim = 2.0true in
\yfigdim = 1.2true in
\setbox\pqfigurelegend=\vbox{\hsize=\xfigdim   %
\beginpicture
  \setcoordinatesystem units <0.6\xfigdim,0.7\yfigdim>  point at 0 0
  \setplotarea x from 0 to 1, y from 0.0 to 1.4
\scriptsize
\sevenrm
\put{$p_{init}(x) = 0.5$} [l] at 0 1.3
\put{$q_{init}(x) = 4.0$} [l] at 0 1.1
\put{actual} [l] at 0 0.8 \plot 0.5 0.8 0.8 0.8 /
\put{iter $1$} [l] at 0 0.6  \Red{\plot 0.5 0.6 0.8 0.6 / }\relax
\put{iter $2$} [l] at 0 0.4  \Green{\plot 0.5 0.4 0.8 0.4 /}\relax
\put{iter $6$} [l] at 0 0.2  \Blue{\plot 0.5 0.2 0.8 0.2 /}\relax
\endpicture
}
\setbox\figureone=\vbox{\hsize=\xfigdim   
%
%
\beginpicture
  \setcoordinatesystem units <\xfigdim,0.6\yfigdim>  point at 0 0
  \setplotarea x from 0 to 1.0, y from 0 to 2
\sevenrm
  \axis bottom shiftedto y=0 ticks short numbered from 0 to 1 by 0.2 /
  \axis left ticks short numbered from 0 to 2 by 0.4 /
\put{\scriptsize{$p(x)$}} [l] at 0.02 1.2
\setquadratic
\setdashes <3pt>
\setsolid
\Black{\relax  
\plot
         0    0.1500
    0.0200    0.1500
    0.0400    0.1500
    0.0600    0.1500
    0.0800    0.1500
    0.1000    0.1500
    0.1200    0.1500
    0.1400    0.1500
    0.1600    0.1500
    0.1800    0.1500
    0.2000    0.1501
    0.2200    0.1504
    0.2400    0.1512
    0.2600    0.1532
    0.2800    0.1579
    0.3000    0.1683
    0.3200    0.1892
    0.3400    0.2273
    0.3600    0.2909
    0.3800    0.3869
    0.4000    0.5179
    0.4200    0.6773
    0.4400    0.8477
    0.4600    1.0021
    0.4800    1.1108
    0.5000    1.1500
    0.5200    1.1108
    0.5400    1.0021
    0.5600    0.8477
    0.5800    0.6773
    0.6000    0.5179
    0.6200    0.3869
    0.6400    0.2909
    0.6600    0.2273
    0.6800    0.1892
    0.7000    0.1683
    0.7200    0.1579
    0.7400    0.1532
    0.7600    0.1512
    0.7800    0.1504
    0.8000    0.1501
    0.8200    0.1500
    0.8400    0.1500
    0.8600    0.1500
    0.8800    0.1500
    0.9000    0.1500
    0.9200    0.1500
    0.9400    0.1500
    0.9600    0.1500
    0.9800    0.1500
    1.0000    0.1500
/}\relax
\Red{\relax  
\plot
         0    0.9127
    0.0200    0.9235
    0.0400    0.9333
    0.0600    0.9435
    0.0800    0.9542
    0.1000    0.9654
    0.1200    0.9770
    0.1400    0.9890
    0.1600    1.0015
    0.1800    1.0143
    0.2000    1.0276
    0.2200    1.0415
    0.2400    1.0563
    0.2600    1.0726
    0.2800    1.0920
    0.3000    1.1174
    0.3200    1.1534
    0.3400    1.2070
    0.3600    1.2861
    0.3800    1.3979
    0.4000    1.5446
    0.4200    1.7197
    0.4400    1.9055
    0.4600    2.0750
    0.4800    2.1982
    0.5000    2.2514
    0.5200    2.2253
    0.5400    2.1290
    0.5600    1.9860
    0.5800    1.8262
    0.6000    1.6766
    0.6200    1.5551
    0.6400    1.4687
    0.6600    1.4156
    0.6800    1.3896
    0.7000    1.3833
    0.7200    1.3903
    0.7400    1.4058
    0.7600    1.4263
    0.7800    1.4497
    0.8000    1.4741
    0.8200    1.4983
    0.8400    1.5213
    0.8600    1.5425
    0.8800    1.5615
    0.9000    1.5779
    0.9200    1.5914
    0.9400    1.6021
    0.9600    1.6098
    0.9800    1.6167
    1.0000    1.6258
/}\relax
\Green{\relax 
\plot
         0    0.4479
    0.0200    0.4538
    0.0400    0.4587
    0.0600    0.4641
    0.0800    0.4699
    0.1000    0.4763
    0.1200    0.4833
    0.1400    0.4908
    0.1600    0.4989
    0.1800    0.5077
    0.2000    0.5172
    0.2200    0.5276
    0.2400    0.5392
    0.2600    0.5529
    0.2800    0.5701
    0.3000    0.5939
    0.3200    0.6291
    0.3400    0.6825
    0.3600    0.7624
    0.3800    0.8758
    0.4000    1.0253
    0.4200    1.2044
    0.4400    1.3956
    0.4600    1.5719
    0.4800    1.7036
    0.5000    1.7670
    0.5200    1.7530
    0.5400    1.6707
    0.5600    1.5437
    0.5800    1.4019
    0.6000    1.2719
    0.6200    1.1711
    0.6400    1.1054
    0.6600    1.0718
    0.6800    1.0627
    0.7000    1.0693
    0.7200    1.0842
    0.7400    1.1024
    0.7600    1.1208
    0.7800    1.1381
    0.8000    1.1537
    0.8200    1.1676
    0.8400    1.1798
    0.8600    1.1905
    0.8800    1.1998
    0.9000    1.2076
    0.9200    1.2141
    0.9400    1.2191
    0.9600    1.2228
    0.9800    1.2272
    1.0000    1.2356
/}\relax
\Blue{\relax 
\plot
         0    0.1692
    0.0200    0.1710
    0.0400    0.1714
    0.0600    0.1718
    0.0800    0.1723
    0.1000    0.1727
    0.1200    0.1732
    0.1400    0.1738
    0.1600    0.1744
    0.1800    0.1750
    0.2000    0.1758
    0.2200    0.1768
    0.2400    0.1783
    0.2600    0.1812
    0.2800    0.1868
    0.3000    0.1982
    0.3200    0.2201
    0.3400    0.2593
    0.3600    0.3240
    0.3800    0.4213
    0.4000    0.5536
    0.4200    0.7145
    0.4400    0.8864
    0.4600    1.0425
    0.4800    1.1528
    0.5000    1.1938
    0.5200    1.1564
    0.5400    1.0497
    0.5600    0.8972
    0.5800    0.7289
    0.6000    0.5717
    0.6200    0.4429
    0.6400    0.3491
    0.6600    0.2878
    0.6800    0.2518
    0.7000    0.2330
    0.7200    0.2245
    0.7400    0.2215
    0.7600    0.2211
    0.7800    0.2218
    0.8000    0.2228
    0.8200    0.2238
    0.8400    0.2248
    0.8600    0.2256
    0.8800    0.2264
    0.9000    0.2270
    0.9200    0.2275
    0.9400    0.2279
    0.9600    0.2282
    0.9800    0.2306
    1.0000    0.2383
/}\relax
\endpicture
}
\yfigdim = 0.18true in
\setbox\figuretwo=\vbox{\hsize=\xfigdim   
\beginpicture
  \setcoordinatesystem units <\xfigdim,\yfigdim>  point at 0 0
  \setplotarea x from 0 to 1.0, y from 0 to 8
\sevenrm
  \axis bottom shiftedto y=0 ticks short numbered from 0 to 1 by 0.2 /
  \axis left ticks short numbered from 0 to 8 by 2 /
\put{\scriptsize{$q(x)$}} [l] at 0.02 8
\setquadratic
\setdashes <3pt>
\setsolid
\Black{\relax  
\plot
         0    0.5000
    0.0200    0.5000
    0.0400    0.5000
    0.0600    0.5000
    0.0800    0.5000
    0.1000    0.5000
    0.1200    0.5000
    0.1400    0.5000
    0.1600    0.5000
    0.1800    0.5000
    0.2000    0.5000
    0.2200    0.5000
    0.2400    0.5000
    0.2600    0.5000
    0.2800    0.5000
    0.3000    0.5000
    0.3200    0.5000
    0.3400    0.5000
    0.3600    0.5001
    0.3800    0.5002
    0.4000    0.5009
    0.4200    0.5028
    0.4400    0.5081
    0.4600    0.5221
    0.4800    0.5553
    0.5000    0.6282
    0.5200    0.7741
    0.5400    1.0411
    0.5600    1.4860
    0.5800    2.1585
    0.6000    3.0752
    0.6200    4.1910
    0.6400    5.3837
    0.6600    6.4650
    0.6800    7.2255
    0.7000    7.5000
    0.7200    7.2255
    0.7400    6.4650
    0.7600    5.3837
    0.7800    4.1910
    0.8000    3.0752
    0.8200    2.1585
    0.8400    1.4860
    0.8600    1.0411
    0.8800    0.7741
    0.9000    0.6282
    0.9200    0.5553
    0.9400    0.5221
    0.9600    0.5081
    0.9800    0.5028
    1.0000    0.5009
/}\relax
\Red{\relax  
\plot
        0   -0.6463
    0.0200   -0.6504
    0.0400   -0.6539
    0.0600   -0.6569
    0.0800   -0.6592
    0.1000   -0.6608
    0.1200   -0.6618
    0.1400   -0.6621
    0.1600   -0.6617
    0.1800   -0.6605
    0.2000   -0.6586
    0.2200   -0.6560
    0.2400   -0.6525
    0.2600   -0.6481
    0.2800   -0.6429
    0.3000   -0.6368
    0.3200   -0.6297
    0.3400   -0.6216
    0.3600   -0.6125
    0.3800   -0.6021
    0.4000   -0.5903
    0.4200   -0.5762
    0.4400   -0.5575
    0.4600   -0.5292
    0.4800   -0.4805
    0.5000   -0.3912
    0.5200   -0.2278
    0.5400    0.0576
    0.5600    0.5218
    0.5800    1.2143
    0.6000    2.1513
    0.6200    3.2871
    0.6400    4.4984
    0.6600    5.5958
    0.6800    6.3686
    0.7000    6.6508
    0.7200    6.3789
    0.7400    5.6160
    0.7600    4.5280
    0.7800    3.3252
    0.8000    2.1972
    0.8200    1.2674
    0.8400    0.5819
    0.8600    0.1246
    0.8800   -0.1537
    0.9000   -0.3095
    0.9200   -0.3905
    0.9400   -0.4303
    0.9600   -0.4488
    0.9800   -0.4611
    1.0000   -0.4770
/}\relax
\Green{\relax 
\plot
         0    0.0509
    0.0200    0.0488
    0.0400    0.0463
    0.0600    0.0436
    0.0800    0.0406
    0.1000    0.0373
    0.1200    0.0338
    0.1400    0.0300
    0.1600    0.0260
    0.1800    0.0216
    0.2000    0.0170
    0.2200    0.0121
    0.2400    0.0069
    0.2600    0.0015
    0.2800   -0.0043
    0.3000   -0.0104
    0.3200   -0.0167
    0.3400   -0.0234
    0.3600   -0.0303
    0.3800   -0.0373
    0.4000   -0.0442
    0.4200   -0.0501
    0.4400   -0.0528
    0.4600   -0.0471
    0.4800   -0.0222
    0.5000    0.0420
    0.5200    0.1792
    0.5400    0.4373
    0.5600    0.8731
    0.5800    1.5365
    0.6000    2.4439
    0.6200    3.5505
    0.6400    4.7339
    0.6600    5.8058
    0.6800    6.5570
    0.7000    6.8221
    0.7200    6.5383
    0.7400    5.7686
    0.7600    4.6785
    0.7800    3.4774
    0.8000    2.3536
    0.8200    1.4297
    0.8400    0.7507
    0.8600    0.2999
    0.8800    0.0278
    0.9000   -0.1226
    0.9200   -0.1991
    0.9400   -0.2353
    0.9600   -0.2513
    0.9800   -0.2620
    1.0000   -0.2774
/}\relax
\Blue{\relax 
\plot
         0    0.4689
    0.0200    0.4688
    0.0400    0.4686
    0.0600    0.4684
    0.0800    0.4682
    0.1000    0.4679
    0.1200    0.4677
    0.1400    0.4674
    0.1600    0.4671
    0.1800    0.4667
    0.2000    0.4664
    0.2200    0.4660
    0.2400    0.4657
    0.2600    0.4652
    0.2800    0.4648
    0.3000    0.4644
    0.3200    0.4639
    0.3400    0.4634
    0.3600    0.4629
    0.3800    0.4626
    0.4000    0.4626
    0.4200    0.4639
    0.4400    0.4686
    0.4600    0.4819
    0.4800    0.5145
    0.5000    0.5867
    0.5200    0.7319
    0.5400    0.9982
    0.5600    1.4423
    0.5800    2.1141
    0.6000    3.0300
    0.6200    4.1453
    0.6400    5.3374
    0.6600    6.4180
    0.6800    7.1779
    0.7000    7.4517
    0.7200    7.1764
    0.7400    6.4150
    0.7600    5.3329
    0.7800    4.1394
    0.8000    3.0227
    0.8200    2.1054
    0.8400    1.4324
    0.8600    0.9871
    0.8800    0.7197
    0.9000    0.5735
    0.9200    0.5003
    0.9400    0.4668
    0.9600    0.4527
    0.9800    0.4432
    1.0000    0.4281
/}\relax
\endpicture
}
\hbox to \hsize{\hss\copy\figureone\hss\copy\figuretwo\hss\copy\pqfigurelegend\hss}
%

\vskip50pt

As a general guidance, since for values of $p(x)$, $q(x)$, one has
to (iteratively) solve the forwards problem to obtain $ g_i(x) := u_t(x,T;p,q)$
for two distinct runs, it is important that the determinant formed from
$g_1,f(g_1)$ and $g_2,f(g_2)$ be nonsingular -- except perhaps
at small number of isolated points (and preferably in all of $\Omega$, cf. \eqref{defdet}, \eqref{detgbdd}).
Of course, if oversampling is possible 
with measurements at, say, $N>2$ time instances $T_i$,
for $i=1,\;\ldots\; N$, then this condition changes to the resulting matrix have rank two at each spatial point and becomes easier to satisfy.

\subsection*{Acknowledgement}
\vskip-4pt
\noindent
The work of the first author was funded in part by the Austrian Science Fund (FWF) 
[10.55776/P36318]. 

\noindent
The work of the second author was supported in part by the
National Science Foundation through award {\sc dms}-2111020.


\end{document}